\documentclass[12pt, fleqn, oneside, reqno]{amsart}
\usepackage{newtxtext}
\usepackage[frenchmath]{newtxmath}
\usepackage[cal=boondoxo]{mathalfa}
\usepackage{mathrsfs}
\usepackage{tikz-cd}
\usetikzlibrary{decorations.pathmorphing,calc,arrows}
\usepackage{microtype}
\usepackage{hyperref}
\usepackage{nccmath}
\usepackage{setspace}

\allowdisplaybreaks


\graphicspath{ {assets/} }

\usepackage{pdflscape}

\usepackage[letterpaper,portrait,textwidth=5.75in]{geometry}

\newcommand{\CC}{\mathbf{C}}
\newcommand{\ZZ}{\mathbf{Z}}
\newcommand{\QQ}{\mathbf{Q}}

\newcommand{\PP}{\mathbf{P}}
\newcommand{\GG}{\mathbf{G}}
\newcommand{\DD}{\mathbf{D}}
\newcommand{\A}{\mathbf{A}}
\newcommand{\vep}{\varepsilon}
\newcommand{\Lotimes}{\otimes^{L}}

\newcommand{\sep}[1]{{#1}^{\mathrm{sep}}}
\newcommand{\alg}[1]{{#1}^{\mathrm{alg}}}

\usepackage{enumitem,moreenum}
\usepackage{amsmath}
\usepackage{amsthm}
\usepackage{mathtools}
\usepackage[utf8]{inputenc}
\usepackage[T1]{fontenc}
\usepackage{accents}

\tikzcdset{arrow style=tikz, diagrams={>=stealth'}}

\makeatletter
\renewenvironment{proof}[1][\proofname]{\par
  \normalfont
  \topsep6\p@\@plus6\p@ \trivlist
  \item[\hskip\labelsep\itshape
    #1\@addpunct{.\enspace\textemdash}]\ignorespaces
}{%
  \qed\endtrivlist
}

\newenvironment{proof*}[1][\proofname]{\par
  \normalfont
  \topsep6\p@\@plus6\p@ \trivlist
  \item[\hskip\labelsep\itshape
    #1\@addpunct{.\enspace\textemdash}]\ignorespaces
}{%
  \endtrivlist
}

\newcommand{\pushright}[1]{\ifmeasuring@#1\else\omit\hfill$\displaystyle#1$\fi\ignorespaces}

\def\@secnumfont{\bfseries}
\makeatother

\newtheoremstyle{bfth}
{}
{}
{\itshape}
{\parindent}
{\itshape}
{\textbf{.}\hspace{.5em}\textemdash}
{.5em}
{{\thmname{#1}\upshape\thmnumber{ \textbf{(#2)}}\thmnote{ (\textit{#3})}}}

\newtheoremstyle{bfthcite}
{}
{}
{\itshape}
{\parindent}
{\itshape}
{\textbf{.}\hspace{.5em}\textemdash}
{.5em}
{{\thmname{#1}\upshape\thmnumber{ \textbf{(#2)}}\thmnote{ \textit{#3}}}}

\newtheoremstyle{bfthstar}
{}
{}
{\itshape}
{\parindent}
{\itshape}
{.\hspace{.5em}\textemdash}
{.5em}
{{\thmname{#1}\upshape\thmnumber{ \textbf{(#2)}}\thmnote{ (\textit{#3})}}}

\newtheoremstyle{bfthstarcite}
{}
{}
{\itshape}
{\parindent}
{\itshape}
{.\hspace{.5em}\textemdash}
{.5em}
{{\thmname{#1}\upshape\thmnumber{ \textbf{(#2)}}\thmnote{ \textit{#3}}}}

\theoremstyle{bfth}

\newtheorem{proposition-definition}[theorem]{Proposition-Definition}

\theoremstyle{bfthcite}
\newtheorem{theorem-cite}{Theorem}[section]
\newtheorem{corollary-cite}[theorem]{Corollary}
\newtheorem{lemma-cite}[theorem]{Lemma}
\newtheorem{proposition-cite}[theorem]{Proposition}
\newtheorem{proposition-definition-cite}[theorem]{Proposition-Definition}

\theoremstyle{bfthstar}
\newtheorem*{theorem*}{Theorem}
\newtheorem*{corollary*}{Corollary}
\newtheorem*{lemma*}{Lemma}
\newtheorem*{proposition*}{Proposition}
\newtheorem*{proposition-definition*}{Proposition-Definition}
\newtheorem*{conjecture*}{Conjecture}

\theoremstyle{bfthstarcite}
\newtheorem*{theorem-cite*}{Theorem}
\newtheorem*{corollary-cite*}{Corollary}
\newtheorem*{lemma-cite*}{Lemma}
\newtheorem*{proposition-cite*}{Proposition}
\newtheorem*{proposition-definition-cite*}{Proposition-Definition}

\theoremstyle{definition}

\newtheorem*{definition*}{Definition}
 
\theoremstyle{remark}

\newtheorem*{remark*}{Remark}

\newtheorem*{question*}{Question}

\newtheorem*{questions*}{Questions}

\DeclareMathOperator{\Aut}{Aut}
\DeclareMathOperator{\Tr}{Tr}
\DeclareMathOperator{\Hom}{Hom}

\DeclareMathOperator{\id}{id}

\DeclareMathOperator{\Spec}{Spec}
\DeclareMathOperator{\Gal}{Gal}

\DeclareMathOperator{\supp}{supp}

\DeclareMathOperator{\fib}{fib}
\DeclareMathOperator{\cofib}{cofib}

\DeclareMathOperator{\colim}{colim}

\DeclareMathOperator{\charac}{char}

\DeclareMathOperator{\Map}{Map}

\DeclareMathOperator{\ev}{ev}
\DeclareMathOperator{\coev}{coev}
\DeclareMathOperator{\Frac}{Frac}
\DeclareMathOperator{\Perv}{Perv}
\DeclareMathOperator{\Cone}{Cone}

\newcommand{\Cat}{\mathrm{Cat}}

\newcommand{\et}[1]{{#1}_{\text{\upshape \'et}}}

\newcommand{\xra}{\xrightarrow}
\newcommand{\lra}{\longrightarrow}

\newcommand{\sHom}{\text{\usefont{U}{BOONDOX-cal}{m}{n}Hom}}

\newcommand{\proet}{\text{\upshape pro\'et}}
\newcommand{\lis}{\mathrm{lisse}}

\newcommand{\op}{\mathrm{op}}

\long\def\comment#1{}

\makeatletter
\newcommand*\dotp{{\mathpalette\dotp@{.5}}}
\newcommand*\dotp@[2]{\mathbin{\vcenter{\hbox{\scalebox{#2}{$\m@th#1\bullet$}}}}}
\newcommand{\leqnomode}{\tagsleft@true}
\newcommand{\reqnomode}{\tagsleft@false}

\DeclareRobustCommand{\cev}[1]{%
  {\mathpalette\do@cev{#1}}%
}
\newcommand{\do@cev}[2]{%
  \vbox{\offinterlineskip
    \sbox\z@{$\m@th#1 x$}%
    \ialign{##\cr
      \hidewidth\reflectbox{$\m@th#1\vec{}\mkern4mu$}\hidewidth\cr
      \noalign{\kern-\ht\z@}
      $\m@th#1#2$\cr
    }%
  }%
}
\makeatother

\setcounter{tocdepth}{1}

\title{The singular support of an $\ell$-adic sheaf}
\author{Owen Barrett}
\date{\today}
\begin{document}
\begin{abstract}
	We define the singular support of an $\ell$-adic sheaf on a smooth
	variety over any field.
	To do this, we combine Beilinson's construction of the singular support
	for torsion \'etale sheaves with Hansen and Scholze's theory of
	universal local acyclicity for $\ell$-adic sheaves.
\end{abstract}
\maketitle
{\setlength{\parskip}{0.5em}\setstretch{1.0}\small\tableofcontents\par}
\section{Introduction}
The notion of singular support (or micro-support) has for several decades
been the point of departure (see Kashiwara-Schapira \cite{KS}) for the 
microlocal study of sheaves and $\mathcal D$-modules on manifolds.
It was hoped for many years that an analogue of the singular support 
existed for algebraic varieties in positive characteristic.
In 2015, Beilinson \cite{Sasha} realized this hope, constructing the 
singular support of a constructible \'etale sheaf with torsion coefficients
on a smooth variety over any field. One of his many innovations was to
relate the singular support to a broader notion of (universal) local
acyclicity than what is captured by the vanishing cycles.
At the time of his article, a satisfying formalism of universal local
acyclicity did not yet exist for $\ell$-adic sheaves.
The fundamental nature of the singular support, however, fueled the
ambition that it might be defined for $\ell$-adic coefficients, rational
coefficients in particular.
(As we will see, the case of integral coefficients offers essentially 
nothing new.)
Indeed, since the appearance of Beilinson's article,
Umezaki-Yang-Zhao \cite{UYZ}, extending Saito~\cite{Saito}, defined the
characteristic cycle of a $\overline\QQ_\ell$-sheaf.
Recent innovations in the theory of universal
local acyclicity by Lu-Zheng \cite{LZ_ula} and Hansen-Scholze \cite{HS}
have extended the definition of universal local acyclicity to $\ell$-adic
sheaves on a very broad class of schemes. With this new formalism in hand,
it is now possible to define the singular support of an $\ell$-adic sheaf
by extending Beilinson's original arguments to the $\ell$-adic setting.

I wish to extend my heartfelt thanks to
Sasha Beilinson,
Denis-Charles Cisinski,
David Nadler,
David Hansen,
Arthur Ogus
and Peter Scholze
for clarifying explanations, comments and inspiring discussions.
I especially wish to thank Sasha Beilinson for pointing out to me that
the perverse t-exactness of the vanishing cycles implies that they send
torsion-free perverse sheaves to torsion-free perverse sheaves, which is
the heart of the proof of part~\ref{th:SS:integral_model} of
Theorem~\ref{th:SS}.

\subsection{}\label{sec:notation_categories}
Fix a field $k$ and a prime number $\ell\ne\charac k$. Henceforth, 
`variety' will mean `$k$-scheme of finite type.' Let $\Lambda$ be an
algebraic extension $E$ of $\QQ_\ell$ or the ring of integers 
$\mathcal O_E$ of such an extension. We wish to define the singular support
of a constructible complex of $\Lambda$-sheaves on a
variety $X$. If $\Lambda=\mathcal O_E$ for $E$ a finite extension of
$\QQ_\ell$, the classical approach to constructing the derived category of 
$\Lambda$-constructible sheaves on a variety involved taking an inverse
limit of the corresponding categories with coefficients in 
$\mathcal O_E/\varpi^n$ ($\varpi$ a uniformizer of $\mathcal O_E$) as
$n\to\infty$. To obtain the derived category of $E$-constructible sheaves,
one subsequently localized, inverting $\ell$. This approach had the 
disadvantage that the objects under consideration were no longer sheaves on
some site, but were rather formal inverse limits of such. This artificiality 
caused problems when one wanted to replace the variety $X$ by a space which
was no longer of finite type over a field, such as the Milnor tubes which
appear in the definition of local acyclicity.

To address these and other issues, Bhatt and Scholze \cite{BS} in 2015
defined a Grothendieck topology on schemes -- finer than the \'etale
topology -- called the pro-\'etale topology, and showed that the classical
derived categories of constructible $\ell$-adic sheaves could be recovered
as full subcategories of $D(X_\proet,\Lambda)$, where this notation
literally means the derived category of sheaves of modules over a certain
sheaf of rings $\Lambda=\Lambda_X$. This sheaf of rings on $X_\proet$ is
obtained by pulling back the condensed ring corresponding to the topological
ring $\Lambda$ along $X_\proet\to\ast_\proet$; concretely, if $U\to X$ is
weakly \'etale (i.e. for $U$ in $X_\proet$), 
$\Gamma(U,\Lambda_U)=\Map_{\mathrm{cont}}(U,\Lambda)$.
Bhatt and Scholze's definition of constructible sheaf was later extended by
Hemo, Richarz and Scholbach \cite{HRS}, and it is their definition that we
now present.

Fix a scheme $X$. By abuse of notation we will continue to use $\Lambda$ 
to denote the sheaf $\Lambda_X$ defined above
(despite the notation, this sheaf is generally not constant; we will refer
to the constant sheaf with value $\Lambda$ as $\underline\Lambda$). We let
$\mathcal D(X_\proet,\Lambda)$ denote the derived category of 
$\Lambda$-sheaves on $X_\proet$, considered as a stable 
$\infty$-category.\footnote{So, for example, the cone of some map in
$D(X_\proet,\Lambda)$ computes the (homotopy) cofiber or cokernel.}
Its homotopy category is triangulated and will be denoted by 
$D(X_\proet,\Lambda)$. The category
$\mathcal D(X_\proet,\Lambda)$ carries a symmetric monoidal structure given 
by tensor product, and we say an object in $\mathcal D(X_\proet,\Lambda)$
is a \emph{lisse sheaf} if it's dualizable with respect to this structure.
(Concretely, $\mathcal F$ in $\mathcal D(X_\proet,\Lambda)$ is dualizable
if and only if the natural map
$\mathcal G\Lotimes_\Lambda R\sHom(\mathcal F,\Lambda)\to
R\sHom(\mathcal F,\mathcal G)$
is an isomorphism for every $\mathcal G\in\mathcal D(X_\proet,\Lambda)$.)
We let $\mathcal D_\lis(X,\Lambda)=\mathcal D_\lis(X)$ denote the full
subcategory of $\mathcal D(X_\proet,\Lambda)$ on the lisse sheaves. We say
that an object $\mathcal F$ of $\mathcal D(X_\proet,\Lambda)$ is a
\emph{constructible sheaf} if every affine open $U\subset X$ admits a
finite partition $U=\coprod_i U_i$ into constructible locally closed
subschemes $U_i$ (called strata) so that $\mathcal F|_{U_i}$ is lisse.
We let $\mathcal D(X):=\mathcal D(X,\Lambda):=\mathcal D_{\mathrm{cons}}(X_\proet,\Lambda)$ denote
the full subcategory of $\mathcal D(X_\proet,\Lambda)$ on the constructible 
sheaves and we refer to objects of $\mathcal D(X)$ simply as 
\emph{sheaves}.

The standard ($p=0$) t-structure on $\mathcal D(X_\proet,\Lambda)$ often
restricts to a t-structure on $\mathcal D(X)$, and we will refer to sheaves in the heart as
\emph{abelian} sheaves to emphasize that they're concentrated in degree
zero and form an abelian category. With appropriate finiteness assumptions,
the six functors can be defined on $\mathcal D(X)$, and we won't decorate
these functors with Rs or Ls, so $j_*:=Rj_*$, $\sHom:=R\sHom$,
$\otimes:=\Lotimes$ etc. No confusion can result as we will explicitly flag 
any appearance of the un-derived versions of these functors.

\subsection{}\label{sec:intro_ula}
Let $f:X\to S$ be a map locally of finite presentation between schemes over
$\Spec\ZZ[\frac1\ell]$ and let
$\mathcal F\in D(X)$.
\begin{definition*}
We say $f$ is
\emph{universally locally acyclic relative to $\mathcal F$}
(or that $\mathcal F$ is universally locally acyclic over $S$ if $f$ is
understood) if for every geometric point $x\to X$ and generization $t\to S$
of $f(x)$, the map
$\mathcal F_x:=\Gamma(X_x,\mathcal F)\to\Gamma(X_x\times_{S_{f(x)}}t,\mathcal F)$
is an isomorphism, and this remains true after arbitrary base change in
$S$.
\end{definition*}
That this definition, formally identical to the classical one, holds any
water is a consequence of the work of Hansen and Scholze, and will be 
discussed in \S\ref{sec:ula}.

When $S$ is a nonsingular curve (or more generally a regular scheme of
dimension 1), we can define the notion of universal local acyclicity at a
point in terms of vanishing cycles. Let $x\to X$ be a geometric point and
denote by $X_x$ and $S_{f(x)}$ the strict localizations of $X$ and $S$ at
$x$ and $f(x)$, respectively. As $S_{f(x)}=\Spec A$ with $A$ a strictly
henselian dvr, the normalization $V$ of $A$ in $\alg{(\Frac A)}$ is an
absolutely integrally closed valuation ring\footnote{i.e. $V$ is a
valuation ring and $\Frac V$ is algebraically closed.}
(with value group $\QQ$). Denoting by $j$ and $i$ the open and
closed immersions of the generic point $\eta$ and closed point of $V$, 
respectively, and base extensions of such, the sheaf of nearby cycles
$\psi_f(\mathcal F)$ on the geometric special fiber $f^{-1}(f(x))$ is
defined as $i^*j_*(\mathcal F|_{X_\eta})$.\footnote{The nearby cycles are
usually defined using the normalization $V'$ of $A$ in $\sep{(\Frac A)}$
in lieu of $V$. The distinction is immaterial as $V'\to V$ is radicial.
Indeed, let $K:=\Frac V$, $K':=\Frac V'$, let $v$ denote the valuation on
$K'$, and suppose $\charac k=:p>0$.
Then every $a\in K$ satisfies $a^{p^j}\in K'$ for some $j$, so the
valuation $v$ on $K'$ extends uniquely to one on $K$ (still called $v$), so
that $v(a)\geq0\Leftrightarrow a^{p^j}\in V'$. If $\mathfrak p\subset V'$
is a prime ideal, $V/\mathfrak p V$ is generated over $V'/\mathfrak p$ by
elements $\overline a$ satisfying equations like 
$\overline a^{p^j}=\overline b\in V'/\mathfrak p$. As $V$ and $V'$ are
valuation rings, their spectra are linearly ordered, and as the extension
$V'\subset V$ is integral, there are no nontrivial specializations between
points in any fiber, whence $|\Spec V\otimes_{V'}k(\mathfrak p)|=1$. As
radical ideals in valuation rings are prime, the point of the 
$\mathfrak p$-fiber is $\sqrt{\mathfrak p V}$, and
$k\big(\sqrt{\mathfrak p V}\big)/k(\mathfrak p)$ is purely inseparable as
the extension $V/\sqrt{\mathfrak p V}\supset V'/\mathfrak p$ is generated
by $p^j$-th roots for various $j$.}
It's a constructible sheaf
\cite[Theorem 4.1]{HS} endowed with a map from $i^*\mathcal F$, and the
sheaf of vanishing cycles $\phi_f(\mathcal F)\in D(f^{-1}f(x))$ is
obtained as $\cofib(i^*\mathcal F\to\psi_f(\mathcal F))$.
We say $f$ is \emph{universally locally acyclic at $x$ relative to 
$\mathcal F$} if $\phi_f(\mathcal F)_x=0$.
This is tantamount to asking that the map
$\mathcal F_x=\Gamma(X_x,\mathcal F)\to\Gamma(X_x\times_{S_{f(x)}}\eta,\mathcal F)$
be an isomorphism.

By abuse of notation, when speaking of varieties we'll sometimes refer to
universal local acyclicity at a scheme-theoretic point rather than at a
geometric point. There is no ambiguity: if $x\in X$ is a scheme-theoretic
point, then $\Gal(\sep k/k)$ acts transitively on (algebraic) geometric
points centered on $x$, and induces isomorphisms on the corresponding stalks
of $\phi_f(\mathcal F)$. Given $x$, we'll often use $\overline x$ to denote
a geometric point centered on $x$ defined by a separable closure of $k(x)$.

\subsection{}\label{sec:transversality_def}
In this section and the next one we recall some terminology from
\cite{Sasha}.

If $X$ is a variety, we refer to correspondences of
the form $X\xleftarrow h U\xrightarrow fY$ as \emph{test pairs} on $X$.
If $\mathcal F\in D(X)$, we say a test pair $(h,f)$ is 
\emph{$\mathcal F$-acyclic} if $f$ is universally locally acyclic 
rel.\,$h^*\mathcal F$.\footnote{Beilinson's original definition of
$\mathcal F$-acyclicity for \'etale sheaves was phrased in terms of local
acyclicity rather than universal local acyclicity. Gabber later proved they
are the same notion for a morphism of finite type with noetherian target 
\cite[Corollary 6.6]{LZ_bases}.}

Suppose $X$ is a smooth variety and $C\subset T^*X:=T^*(X/k)$ a closed
conical (i.e. $\GG_m$-invariant) subset of the cotangent bundle of $X$.
The \emph{base} of $C$ is its image in $X$, which is closed.
A morphism $h:U\to X$ with $U$ smooth is said to be $C$-transversal at a
geometric point $u\to U$ if for every nonzero $\mu\in C_{h(u)}$ the
covector $dh(\mu)\in T^*_uU$ is nonzero. A map $f:X\to Y$ with $Y$ smooth
is said to be $C$-transversal at a geometric point $x\to X$ if for every
nonzero $\nu\in T^*_{f(x)}Y$ one has $df\notin C_x$. The morphism $f$ or $h$
is said to be $C$-transversal if it is so at every geometric point.

A test pair $(h,f)$ as above is said to be $C$-transversal at a
geometric point $u\to U$ if $U$ and $Y$ are smooth, $h$ is $C$-transversal
at $u$ and $f$ is $h^\circ C_u$-transversal at $u$, where 
$h^\circ C_u:=dh(C_{h(u)})$. The test pair is said to be $C$-transversal
if it is so at every geometric point of $U$.
If $h:U\to X$ is $C$-transversal (as is the case when $h$ is smooth), then
for each geometric point $u\to U$, the closed subsets 
$h^\circ C_u\subset T^*_uU$ are the fibers of a closed
conical subset of $T^*U$, denoted $h^\circ C$ \cite[Lemma 1.2(ii)]{Sasha}.

\subsection{}\label{sec:ss_def}
Let $X$ be a smooth variety and $\mathcal F\in D(X)$. We say that 
$\mathcal F$ is \emph{micro-supported} on a closed conical subset 
$C\subset T^*X$ if every $C$-transversal test pair is $\mathcal F$-acyclic.
If there is a smallest closed conical subset $C\subset T^*X$ on which 
$\mathcal F$ is micro-supported, we say 
\emph{$\mathcal F$ has singular support} and call $C$ the
\emph{singular support} of $\mathcal F$ and denote it by $SS(\mathcal F)$.

We say that $\mathcal F$ is \emph{weakly micro-supported} on a closed 
conical subset $C\subset T^*X$ if every $C$-transversal test pair $(h,f)$
satisfying the following additional conditions is $\mathcal F$-acyclic:
\begin{itemize}[label={--}]
\item The morphism $f:U\to Y$ has target $Y=\A^1$; and
\item If $k$ is infinite then $h:U\to X$ is an open immersion; if $k$ is
finite then $h$ factors as a composition 
$U=V_{k'}:=V\otimes_kk'\to V\hookrightarrow X$ of a base change of a finite 
extension $k'\supset k$ followed by an open immersion.
\end{itemize}
It's not hard to see that a sheaf $\mathcal F$ is both micro-supported and
weakly micro-supported on $T^*X$; we will review why later.
It's also not hard to see that the set of all closed conical subsets of
$T^*X$ on which $\mathcal F$ is weakly micro-supported has a smallest
element, denoted $SS^w(\mathcal F)$. If $\mathcal F$ has singular support, 
then it's clear that $SS^w(\mathcal F)\subset SS(\mathcal F)$. It follows
from Lemma~\ref{lem:ula_curve} that $SS^w(\mathcal F)$ admits the following
description: when $k$ is infinite, 
$SS^w(\mathcal F)$ is the closure in $T^*X$ of the set of points of the form
$(x,df(x))$, where $x\in|X|$ and $f$ is a function on a Zariski neighborhood
of $x$ which is not universally locally acyclic at $x$ 
rel.\,$\mathcal F$.\footnote{i.e. for any geometric point $\overline x$
centered on $x$, the vanishing cycles $\psi_f(\mathcal F)$ has nonzero stalk
at $\overline x$. That it suffices to check on closed points follows from
the constructibility of the sheaf of vanishing cycles.}
When $k$ is finite, $SS^w(\mathcal F)$ coincides with the image under the
map $T^*X_{\alg k}\to T^*X$ of the closure of the set of points of the form
$(x,df(x))$ where $x\in|X_{\alg k}|$ and $f$ is a function on $V_{\alg k}$,
$V\subset X$ a Zariski neighborhood of $x$, such that $f$ is not universally
locally acyclic at $x$ rel.\,$\mathcal F$.\footnote{This closed conical
subset coincides with the one built from finite extensions. Indeed, let
$k'$ be a finite extension of $k$ and let $J_{k'}$ denote the image in 
$T^*X$ of the set $\{(x,df(x))\mid x\in|X_{k'}|,$ $f$ a function on 
$V_{k'}$, $V\subset X$ a Zariski neighborhood of (the image of) $x$, such
that $f$ is not universally locally acyclic at $x$
rel.\,$\mathcal F\}\subset T^*X_{k'}$.
Then $SS^w(\mathcal F)=\overline{\cup_{k'}J_{k'}}$.

To see that $SS^w(\mathcal F)$ defined in this way is conical, first note 
that $\cup_{k'}J_{k'}$ is conical: given a geometric closed point 
$\mu\to\cup_{k'}J_{k'}$ and an $\lambda\in\GG_m(\alg k)$, there is a
(possibly different) geometric point $\nu\to\cup_{k'}J_{k'}$ centered on the 
same closed point of $T^*X$ as $\mu$ such that $\nu=(x,df(x))$ for $f$
defined on some neighborhood $V_{\alg k}$ of $x\in X_{\alg k}$.
As $\Gal(\alg k/k)$ acts on $T^*X_{\alg k}$, fixing $T^*X$, there is some 
$\sigma\in\Gal(\alg k/k)$ so that $\mu=\sigma(\nu)$. 
As $\sigma^{-1}(\lambda)f$ is not universally locally acyclic at $x$ 
rel.\,$\mathcal F$, the image of $\sigma^{-1}(\lambda)\nu$ in $T^*X$ lies in
$SS^w(\mathcal F)$, so $\lambda\sigma(\nu)=\lambda\mu$ does, too.
To conclude, observe that the closure of a $\GG_m$-stable subset of 
$T^*X_{\alg k}$ is again $\GG_m$-stable.}

\subsection{}\label{sec:main_results}
We can now state our main results. Fix a smooth variety $X$.
If $\mathcal F\in D(X)$ with $\Lambda$ integral (i.e. so that 
$\Lambda\otimes\ZZ/\ell\ne0$), then
$\mathcal F=\mathcal F'\otimes_{\mathcal O_E}\Lambda$ for some
$\mathcal F'\in D(X,\mathcal O_E)$, where $E$ is a finite extension of
$\QQ_\ell$. Then $\mathcal F'/\ell:=\mathcal F'\otimes_{\ZZ_\ell}\ZZ/\ell$
is an \'etale sheaf of $\ZZ/\ell$-modules, and therefore has singular
support as defined by Beilinson.
If $\Lambda$ is rational (i.e. $\Lambda$ is an algebraic extension $E$ of 
$\QQ_\ell$), then $\mathcal F$ admits an integral model $\mathcal F_0$ by
\cite[Corollary 2.4]{HS}; i.e. $\mathcal F_0$ is in $D(X,\mathcal O_E)$ and
$\mathcal F_0\otimes_{\ZZ_\ell}\QQ_\ell\simeq\mathcal F$.
\begin{theorem*}\label{th:SS}\begin{enumerate}[label=(\roman*)]
	\item\label{th:SS:existence} Every sheaf $\mathcal F$ in $D(X)$ has
	singular support.
	\item If $\Lambda=\mathcal O_E$ with $E/\QQ_\ell$ finite, then 
	$SS(\mathcal F)=SS(\mathcal F/\ell)$.
	\item If $\Lambda=\mathcal O_L$ with $L/\QQ_\ell$ infinite algebraic,
	then $SS(\mathcal F)=SS(\mathcal F')$, where $\mathcal F'$ is any
	approximation for $\mathcal F$ over the ring of integers of a finite
	extension of $\QQ_\ell$.
	\item\label{th:SS:integral_comparison} If $\Lambda$ is rational and
	$\mathcal F_0$ is any integral model for $\mathcal F$, then
	$SS(\mathcal F)\subset SS(\mathcal F_0)$.
	\item\label{th:SS:integral_model} If $\Lambda$ is rational then 
	$\mathcal F$ admits an integral model $\mathcal F_0$ with
	$SS(\mathcal F_0)=SS(\mathcal F)$.
	If $\mathcal F$ is perverse, then any integral model $\mathcal F_0$ for
	$\mathcal F$ which is perverse and torsion-free as a perverse
	sheaf\footnote{i.e. such that $\ell:\mathcal F_0\to\mathcal F_0$ is
	injective in $\Perv(X)$.} has $SS(\mathcal F_0)=SS(\mathcal F)$.
	\item\label{th:SS:dim} If $X$ is connected and $\mathcal F\ne0$, then
	$SS(\mathcal F)$ is equidimensional of dimension $\dim X$.
	\item\label{th:SS:SSw} One has $SS^w(\mathcal F)=SS(\mathcal F)$.
	\item\label{th:SS:constituents} One has
	$SS(\mathcal F)=\bigcup_iSS({^p\mathcal H^i}\mathcal F)$.
	If $\Lambda$ is rational and $\{\mathcal F_\alpha\}$ are the perverse
	Jordan-H\"older constituents of $\mathcal F$,\footnote{An irreducible
	perverse sheaf is a \emph{constituent} for $\mathcal F$ if it appears in
	a Jordan-H\"older series for some ${^p\mathcal H^i\mathcal F}$.}
	then moreover $SS(\mathcal F)=\bigcup_\alpha SS(\mathcal F_\alpha)$.
	\item\label{th:SS:verdier} $SS(D\mathcal F)=SS(\mathcal F)$, where $D$
	denotes Verdier duality.
	\item\label{th:SS:smooth_pullback} For a smooth map $f:Y\to X$ one has
	$SS(f^*\mathcal F)=f^\circ SS(\mathcal F)$.
	\item\label{th:SS:field_extn} If $k'/k$ is any extension of fields and 
	$\mathcal F_{k'}$ is the inverse image of $\mathcal F$ on
	$X_{k'}:=X\otimes_kk'$, then the closed subsets $SS(\mathcal F)_{k'}$
	and $SS(\mathcal F_{k'})$ of $T^*X_{k'}$ coincide.
	\end{enumerate}
\end{theorem*}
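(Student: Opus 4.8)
The plan is to reduce to the case of torsion \'etale coefficients --- where the statement is due to Beilinson --- and to bridge the gap in coefficients using the earlier parts of Theorem~\ref{th:SS}. Observe first that both sides live in $T^*X_{k'}=(T^*X)\otimes_kk'$, since $\Omega_{X_{k'}/k'}=\Omega_{X/k}\otimes_kk'$; here $SS(\mathcal F)_{k'}$ denotes the preimage of $SS(\mathcal F)$ under the projection $T^*X_{k'}\to T^*X$, which is closed and conical, so the two sides are genuinely comparable closed conical subsets. Suppose $\Lambda=\mathcal O_E$ with $E/\QQ_\ell$ finite. Then $\mathcal F/\ell$ is a torsion \'etale sheaf, $\mathcal F_{k'}/\ell=(\mathcal F/\ell)_{k'}$ (forming $-\otimes_{\ZZ_\ell}\ZZ/\ell$ commutes with pullback along $X_{k'}\to X$), and by Theorem~\ref{th:SS} one has $SS(\mathcal F)=SS(\mathcal F/\ell)$ and $SS(\mathcal F_{k'})=SS((\mathcal F/\ell)_{k'})$. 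As $SS(\mathcal F)_{k'}$ and $SS(\mathcal F/\ell)_{k'}$ are literally the same subset, the claim reduces to $SS(\mathcal F/\ell)_{k'}=SS((\mathcal F/\ell)_{k'})$ for the torsion \'etale sheaf $\mathcal F/\ell$, which for torsion \'etale coefficients is due to Beilinson (see~\cite{Sasha}). The case $\Lambda=\mathcal O_L$ with $L/\QQ_\ell$ infinite algebraic reduces to the previous one by Theorem~\ref{th:SS}, since an approximation for $\mathcal F_{k'}$ over the ring of integers of a finite subextension is the base change of one for $\mathcal F$.

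For $\Lambda$ rational I would first reduce to the perverse case. Pullback along $X_{k'}\to X$ is $t$-exact for the perverse $t$-structure --- extension of the ground field preserves the dimensions of subvarieties, hence the support conditions defining ${}^p\mathcal D^{\le0}$ and ${}^p\mathcal D^{\ge0}$ --- so it commutes with the perverse cohomology functors ${}^p\mathcal H^i$ and sends $\Perv(X)$ exactly into $\Perv(X_{k'})$. By part~\ref{th:SS:constituents} of Theorem~\ref{th:SS}, $SS(\mathcal F)=\bigcup_iSS({}^p\mathcal H^i\mathcal F)$ and $SS(\mathcal F_{k'})=\bigcup_iSS(({}^p\mathcal H^i\mathcal F)_{k'})$, and forming preimages commutes with unions, so we may assume $\mathcal F$ perverse. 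By part~\ref{th:SS:integral_model} of Theorem~\ref{th:SS}, pick a perverse integral model $\mathcal F_0$ of $\mathcal F$ which is torsion-free as a perverse sheaf, so $SS(\mathcal F_0)=SS(\mathcal F)$. Then $\mathcal F_{0,k'}$ is a perverse integral model of $\mathcal F_{k'}$, and since $-\otimes_kk'$ is an exact functor of abelian categories on the hearts it preserves the injectivity of $\ell\colon\mathcal F_0\to\mathcal F_0$, so $\mathcal F_{0,k'}$ is torsion-free as a perverse sheaf; hence $SS(\mathcal F_{0,k'})=SS(\mathcal F_{k'})$ by part~\ref{th:SS:integral_model} of Theorem~\ref{th:SS} over $k'$. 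Combining this with the integral case applied to $\mathcal F_0$ and with $SS(\mathcal F_0)=SS(\mathcal F)$ yields $SS(\mathcal F_{k'})=SS(\mathcal F_{0,k'})=SS(\mathcal F_0)_{k'}=SS(\mathcal F)_{k'}$.

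The main obstacle is thus the torsion \'etale case invoked above: that transversality and universal local acyclicity are compatible with base change along an arbitrary field extension $k'/k$. Transversality causes no trouble, being a nonvanishing condition on covectors checked at geometric points and so unaffected by extension of an algebraically closed ground field in either direction. For universal local acyclicity one inclusion is formal, since it is preserved under arbitrary base change by definition; the substance is its descent along $k'/k$. I would reduce $k'/k$ to the finitely generated case by a limit argument, then factor it as a purely transcendental extension followed by a finite one, and the finite one further as a separable step followed by a purely inseparable one. The purely transcendental step follows from the compatibility of $SS$ with smooth pullback (applied to the projections $X\times_k\Spec k[t_1,\dots,t_n]_g\to X$) together with its compatibility with open immersions and with filtered limits; the separable step is faithfully flat descent of universal local acyclicity along the finite \'etale surjection $X_{k'}\to X$; and the purely inseparable step is immediate, $X_{k'}\to X$ being a universal homeomorphism, under which the (pro-)\'etale sheaf theory and hence universal local acyclicity are invariant. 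Since we work throughout with $SS$ in its test-pair formulation rather than with $SS^w$, no separate treatment of finite base fields is needed.
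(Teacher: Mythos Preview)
Your reduction of the integral case to Beilinson's torsion \'etale setting via parts (ii) and (iii) is sound, and your sketch for the torsion \'etale case (limit, then transcendental/separable/purely inseparable factorization) is a reasonable outline, though the purely transcendental step needs more than you indicate: part~\ref{th:SS:smooth_pullback} compares singular supports of $k$-varieties, and passing from $SS(p^*\mathcal F)\subset T^*(X\times_k\A^n)$ to $SS(\mathcal F_{k(t_1,\dots,t_n)})\subset T^*(X_{k'}/k')$ over the generic point still requires an argument relating $SS$ over $k$ to $SS$ over the residue field---which is close to what you are trying to prove.

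The more serious issue is a circularity in the rational case. You invoke part~\ref{th:SS:integral_model} over both $k$ and $k'$ to pass to a torsion-free perverse integral model. But the paper's proof of part~\ref{th:SS:integral_model} explicitly uses part~\ref{th:SS:field_extn}: it begins by reducing to $k$ infinite \emph{via} \ref{th:SS:field_extn}, since the concrete description of $SS^w$ that drives the vanishing-cycles argument is only available over an infinite field. So for $k$ finite your argument is circular: you need \ref{th:SS:integral_model} over $k$ to prove \ref{th:SS:field_extn}, but \ref{th:SS:integral_model} over $k$ needs \ref{th:SS:field_extn} for some infinite extension of $k$, with rational coefficients.

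The paper avoids both problems by arguing uniformly in the coefficients and without recourse to~\ref{th:SS:integral_model}. It uses the Radon--Veronese description of $SS$ (Proposition~\ref{prop:B1.7}): the irreducible components of $SS(\mathcal F)$ are read off from the ramification divisor $D_{\mathcal F}\subset\tilde\PP^\vee$, i.e.\ the complement of the maximal open on which $\tilde R(i_*\mathcal F)$ is lisse. Part~\ref{th:SS:field_extn} then reduces to the single statement that for any sheaf $\mathcal G$ on $\tilde\PP^\vee$, the maximal lisse locus is stable under arbitrary field extension. This is proved directly: if $\mathcal G_{k'}$ were lisse on a neighborhood of a generic point of $D_{k'}$, the specialization map at that point would be an isomorphism; but specialization maps are insensitive to field extension, and over $k$ this contradicts the lemma that no specialization into the non-lisse locus can be an isomorphism. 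This handles all coefficients and all extensions $k'/k$ at once, with no factorization of $k'/k$ and no appeal to~\ref{th:SS:integral_model}.
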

It continues to be true that if $X$ is a projective space, the irreducible
components of the singular support are in bijection with those of the
ramification divisor of the Radon transform of $i_*\mathcal F$, where $i$
is a Veronese embedding of degree $>1$. The precise statement will be given
in \S\ref{sec:SS}.

The theorem shows that the $\ell$-adic singular support enjoys the
same formal properties as the singular support of torsion \'etale sheaves.
In particular, when $\mathcal F$ is a rational sheaf, the inclusion
$SS(\mathcal F)\subset SS(\mathcal F_0)$
combined with the equidimensionality of both implies that $SS(\mathcal F)$
can be obtained from $SS(\mathcal F_0)$ by deleting some irreducible
components; the equality $SS(\mathcal F)=SS^w(\mathcal F)$ shows that these
deleted irreducible components correspond to covectors $(x,df(x))$ where
$\phi_f(\mathcal F)_{\overline x}$ is nonzero but of torsion; i.e. where 
$\phi_f(\mathcal F)_{\overline x}\ne0$ but 
$\phi_f(\mathcal F)_{\overline x}\otimes\QQ_\ell=0$, or, equivalently, as
$\phi_f(\mathcal F)$ is constructible, so that 
$\phi_f(\mathcal F)_{\overline x} $ is a $\ZZ/\ell^n$-module for some $n$.
It's possible to describe these deleted components more explicitly; see the 
proposition below.

If $\mathcal F$ is a rational sheaf and $\mathcal F_0$ is any integral model
for it, and $\mathcal G_0$ is a torsion \'etale constructible sheaf so that
$SS(\mathcal G_0)\not\subset SS(\mathcal F)$, then 
$\mathcal F_0\oplus\mathcal G_0$ is an example of an integral model for
$\mathcal F$ such that the inclusion 
$SS(\mathcal F)\subset SS(\mathcal F_0\oplus\mathcal G_0)$ is strict.

\begin{proof}[Proof of Theorem~\ref{th:SS}\ref{th:SS:integral_model}]
	As any approximation for $\mathcal F$ will have the same singular 
	support (cf. Lemma~\ref{lem:integral_extension_ula}), we may replace 
	$\mathcal F$ by such an approximation and assume $\Lambda$ is a finite
	extension $E$ of $\QQ_\ell$. Suppose first $\mathcal F$ is a perverse
	sheaf and let $\mathcal F'_0$ be any perverse integral model.
	Then $\ker(\ell^n:\mathcal F'_0\to\mathcal F'_0)$
	eventually stabilizes with stable value $\mathcal K_0$ as 
	$\Perv(X,\mathcal O_E)$ is noetherian.
	Then $\mathcal F_0:=\mathcal F'_0/\mathcal K_0$ is an integral model for
	$\mathcal F$ which is torsion-free as a perverse sheaf.
	I claim $SS^w(\mathcal F_0)=SS^w(\mathcal F)$.
	First note that the bases of both are the same (this follows from
	Lemma~\ref{lem:B2.1}\ref{lem:B2.1:base} and the next paragraph).
	Next note that we may assume $k$ is infinite by
	part~\ref{th:SS:field_extn} of the theorem.
	Now suppose $SS^w(\mathcal F_0)$ were strictly larger than 
	$SS^w(\mathcal F)$, so that we could find a geometric point $x\to X$ and
	a smooth function $f:U\to\A^1$ on a Zariski neighborhood $U\subset X$ of
	$x$ such that $\phi_f(\mathcal F_0)_x\ne0$ but 
	$(x,df(x))\notin SS^w(\mathcal F)$.
	By the definition of $SS^w(\mathcal F)$, this would imply that we could
	shrink $U$ about $x$ so that $\phi_f(\mathcal F)=0$ on 
	$U\cap f^{-1}f(x)$.
	The perverse (left) t-exactness of $\phi_f[-1]$ implies that 
	$\phi_f(\mathcal F_0)[-1]$ is torsion-free as a perverse sheaf.
	This is a contradiction as a nonzero torsion-free perverse sheaf
	has a nonzero rationalization.
	
	Indeed, let $W$ be a variety and $\mathcal M_0$ a torsion-free perverse 
	$\mathcal O_E$-sheaf on it. If $V\subset W$ is a Zariski open such that
	$\mathcal M_0|_V$ is nonzero and lisse, then $\mathcal M_0|_V$ is a
	shift of a torsion-free local system (i.e. abelian lisse sheaf) on $V$
	(see Corollary~\ref{cor:lisse_t}). If $\mathcal M_0$ is not supported
	generically, then let $i:Z\hookrightarrow W$ denote its support, so
	that $\mathcal M_0=i_*\mathcal N_0$ for some torsion-free perverse
	$\mathcal O_E$-sheaf on $Z$. Then $\mathcal N_0$ is supported 
	generically on $Z$ and we may conclude by the previous argument.
	
	For a general $\mathcal F\in D(X,E)$, we may write $\mathcal F$
	as a successive extension in $D(X,E)$ of (shifts of) its perverse
	cohomology sheaves $^p\mathcal H^i\mathcal F$. For each $i$, we can find
	a perverse integral model $\mathcal F_0^i$ for 
	$^p\mathcal H^i\mathcal F$ with
	$SS(\mathcal F_0^i)=SS({^p\mathcal H^i\mathcal F})$, as these coincide
	with their weak versions by part~\ref{th:SS:SSw} of the theorem.
	Proceeding by induction on the perverse amplitude of $\mathcal F$,
	we may assume we've found integral models $\mathcal G_0$ and
	$\mathcal G_0'$ for $^p\tau^{\leq 0}\mathcal F$ and
	$^p\tau^{>0}\mathcal F$ with 
	$SS(\mathcal G_0)=SS({^p\tau^{\leq 0}}\mathcal F)$ and
	$SS(\mathcal G_0')=SS({^p\tau^{>0}}\mathcal F)$.
	Multiplying by a suitable power of $\ell$, we may assume that the map
	${^p\tau^{>0}}\mathcal F[-1]\to{^p\tau^{\leq0}}\mathcal F$ is the
	rationalization of some map $\varrho:\mathcal G_0'[-1]\to\mathcal G_0$
	in $D(X,\mathcal O_E)$ (see \S\ref{sec:cons}), so
	$\mathcal F_0:=\Cone(\varrho)$ has 
	$\mathcal F_0[\ell^{-1}]\simeq \mathcal F$.
	Part~\ref{th:SS:constituents} of the theorem implies that 
	$SS(\mathcal F)
	=SS(^p\tau^{\leq 0}\mathcal F)\cup SS(^p\tau^{>0}\mathcal F)
	=SS(\mathcal G_0')\cup SS(\mathcal G_0')$.
	On the other hand, the sheaves in $D(X,\mathcal O_E)$
	micro-supported on some fixed conical closed subset form a thick
	subcategory of $D(X,\mathcal O_E)$ by 
	Lemma~\ref{lem:B2.1}\ref{lem:B2.1:thick}, in particular closed under 
	cones and shifts. Therefore
	$SS(\mathcal F_0)\subset SS(\mathcal G_0')\cup SS(\mathcal G_0')
	=SS(\mathcal F)$.
	The reverse inclusion by part~\ref{th:SS:integral_comparison} of the
	theorem.
\end{proof}
If $\mathcal F_0$ is an sheaf with $\mathcal O_E$-coefficients on a
smooth variety $X$, the above argument makes it possible to describe
explicitly the discrepancy between $SS(\mathcal F_0)$ and 
$SS(\mathcal F)$. We have by part~\ref{th:SS:constituents} of the
theorem that
$SS(\mathcal F_0)=\bigcup_iSS({^p\mathcal H^i}\mathcal F_0)$
and likewise for $\mathcal F:=\mathcal F_0[\ell^{-1}]$
(rationalization is perverse t-exact).
Form as above for each $i$ the maximal torsion perverse subsheaf
$\mathcal K_0^i\subset{^p\mathcal H^i}\mathcal F_0$
(i.e. so that $\mathcal K_0^i=\ker(\ell^m:{^p\mathcal H^i}\mathcal F_0
\to{^p\mathcal H^i}\mathcal F_0)$ for $m\gg0$),
and let $\mathcal G_0^i:=({^p\mathcal H^i}\mathcal F_0)/\mathcal K^i_0$.
Then $\mathcal G_0^i$ is a torsion-free perverse sheaf and an integral
model for $^p\mathcal H^i\mathcal F$, so by
part~\ref{th:SS:integral_model} of the theorem,
$SS(\mathcal G_0^i)=SS({^p\mathcal H^i}\mathcal F)$.
The argument in the proof above shows that if
$0\to\mathcal M_0\to\mathcal N_0\to\mathcal P_0\to0$ is an exact 
sequence of perverse sheaves on $X$, then 
$SS(\mathcal N_0)=SS(\mathcal M_0)\cup SS(\mathcal P_0)$.
Therefore $SS({^p\mathcal H^i}\mathcal F)$ is obtained from
$SS({^p\mathcal H^i}\mathcal F_0)$ by deleting the irreducible
components of $SS(\mathcal K_0^i)$ which are not already in 
$SS({^p\mathcal H^i}\mathcal F)$.
This proves the following
\begin{proposition*}
	Let $X$ be a smooth variety and $\mathcal F_0$ be in
	$D(X,\mathcal O_E)$ with $E$ an algebraic extension of $\QQ_\ell$.
	For each $i\in\ZZ$ let $\mathcal K_0^i$ be the maximal torsion perverse
	subsheaf of $^p\mathcal H^i\mathcal F_0$. Then 
	$SS(\mathcal F_0)=
	SS(\mathcal F_0\otimes_{\mathcal O_E}E)\cup\bigcup_i SS(\mathcal K_0^i)$.
\end{proposition*}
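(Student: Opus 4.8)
The plan is to handle $\mathcal F_0$ one perverse cohomology sheaf at a time, and to assemble the identity from three facts already available: the decomposition $SS(\mathcal G)=\bigcup_i SS({}^p\mathcal H^i\mathcal G)$ of part~\ref{th:SS:constituents} of Theorem~\ref{th:SS}, which holds for $\mathcal O_E$-coefficients and for $E$-coefficients alike; the statement of part~\ref{th:SS:integral_model} that a torsion-free perverse integral model $\mathcal G_0$ satisfies $SS(\mathcal G_0)=SS(\mathcal G_0\otimes_{\mathcal O_E}E)$; and the additivity $SS(\mathcal N_0)=SS(\mathcal M_0)\cup SS(\mathcal P_0)$ along a short exact sequence $0\to\mathcal M_0\to\mathcal N_0\to\mathcal P_0\to0$ of perverse sheaves, extracted above from the proof of part~\ref{th:SS:integral_model}.

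Write $\mathcal F:=\mathcal F_0\otimes_{\mathcal O_E}E$. For a fixed $i$ I would set $\mathcal G_0^i:=({}^p\mathcal H^i\mathcal F_0)/\mathcal K_0^i$; this is a perverse $\mathcal O_E$-sheaf sitting in a short exact sequence $0\to\mathcal K_0^i\to{}^p\mathcal H^i\mathcal F_0\to\mathcal G_0^i\to0$, and a direct check (using only that $\mathcal K_0^i$, being torsion, is killed by a power of $\ell$, together with its maximality) shows that $\mathcal G_0^i$ is torsion-free. Since $\mathcal K_0^i$ rationalizes to $0$ and $-\otimes_{\mathcal O_E}E$ is perverse t-exact, rationalizing the displayed sequence identifies $\mathcal G_0^i\otimes_{\mathcal O_E}E$ with ${}^p\mathcal H^i(\mathcal F_0\otimes_{\mathcal O_E}E)={}^p\mathcal H^i\mathcal F$; that is, $\mathcal G_0^i$ is a torsion-free perverse integral model of the perverse sheaf ${}^p\mathcal H^i\mathcal F$. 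Part~\ref{th:SS:integral_model} of the theorem then gives $SS(\mathcal G_0^i)=SS({}^p\mathcal H^i\mathcal F)$, and the additivity of $SS$ along the displayed short exact sequence gives
\[
SS({}^p\mathcal H^i\mathcal F_0)=SS(\mathcal K_0^i)\cup SS(\mathcal G_0^i)=SS(\mathcal K_0^i)\cup SS({}^p\mathcal H^i\mathcal F).
\]

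Taking the union over $i$ and applying part~\ref{th:SS:constituents} of the theorem twice, once to $\mathcal F_0$ and once to $\mathcal F$, I would conclude
\begin{align*}
SS(\mathcal F_0)&=\bigcup_i SS({}^p\mathcal H^i\mathcal F_0)\\
&=\bigcup_i\bigl(SS(\mathcal K_0^i)\cup SS({}^p\mathcal H^i\mathcal F)\bigr)\\
&=\Bigl(\bigcup_i SS(\mathcal K_0^i)\Bigr)\cup SS(\mathcal F)\\
&=SS(\mathcal F_0\otimes_{\mathcal O_E}E)\cup\bigcup_i SS(\mathcal K_0^i),
\end{align*}
which is the assertion. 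I expect the only genuinely substantive point to be the additivity of $SS$ along a short exact sequence of perverse sheaves used here: the inclusion $\subseteq$ is immediate from the thick-subcategory property of Lemma~\ref{lem:B2.1}\ref{lem:B2.1:thick} applied to the distinguished triangle $\mathcal M_0\to\mathcal N_0\to\mathcal P_0\to\mathcal M_0[1]$, but the reverse inclusion is the hard part, resting on $SS^w=SS$ (part~\ref{th:SS:SSw}) together with the left perverse t-exactness of $\phi_f[-1]$, exactly as in the proof of part~\ref{th:SS:integral_model}. The remaining steps, namely the torsion-freeness of $\mathcal G_0^i$ and the identification $\mathcal G_0^i\otimes_{\mathcal O_E}E\simeq{}^p\mathcal H^i\mathcal F$, are routine.
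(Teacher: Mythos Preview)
Your argument is correct and is essentially identical to the paper's own: you form $\mathcal G_0^i=({}^p\mathcal H^i\mathcal F_0)/\mathcal K_0^i$, invoke part~\ref{th:SS:integral_model} to get $SS(\mathcal G_0^i)=SS({}^p\mathcal H^i\mathcal F)$, use the additivity of $SS$ along short exact sequences of perverse sheaves, and assemble via part~\ref{th:SS:constituents}. Your closing remark correctly locates the only nontrivial input, namely the reverse inclusion in the additivity statement, which (as you note) comes from $SS=SS^w$ and the perverse t-exactness of $\phi_f[-1]$; this is the same justification the paper gives, though the detailed argument actually appears in \S\ref{sec:SSw} rather than in the proof of part~\ref{th:SS:integral_model} proper.
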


\subsection{} Finally, we record a useful result concerning
$\mathcal F$-transversality.
Let $h:W\to X$ be a locally finitely-presented map of locally
quasi-excellent $\ell$-coprime\footnote{`Locally quasi-excellent' meaning
admitting a Zariski cover by spectra of quasi-excellent rings, 
while `$\ell$-coprime' meaning defined over $\Spec\ZZ[\frac1\ell]$.}
schemes, suppose $X$ is moreover quasi-compact (hence noetherian), and let 
$\mathcal F\in D(X)$. Following Saito \cite[\S8]{Saito}, we say that
$h$ is \emph{$\mathcal F$-transversal} if for every quasi-compact open
$U\subset W$ separated over $X$, the map
$(h|_U)^*\mathcal F\otimes (h|_U)^!\Lambda\to(h|_U)^!\mathcal F$ is an 
isomorphism on $U$, where here $(h|_U)^!$ is defined by virtue of
\cite[Lemma 6.7.19]{BS} and the morphism is the one obtained via adjunction
and the projection formula. This property can be checked over a Zariski
cover of $W$ by quasi-compact opens separated over $X$, and if $W$ is
quasi-compact and $h$ separated, then $h$ is $\mathcal F$-transversal if
and only if the map $h^*\mathcal F\otimes h^!\Lambda\to h^!\mathcal F$ is
an isomorphism on $W$.
Then we have the following result relating $SS(\mathcal F)$-transversality
to $\mathcal F$-transversality.
\begin{proposition-cite*}[{\cite[Proposition 8.13]{Saito}}]
	Let $W$ and $X$ be smooth varieties and $\mathcal F\in D(X)$.
	Then any $SS(\mathcal F)$-transversal morphism $h:W\to X$
	is $\mathcal F$-transversal.
\end{proposition-cite*}
The proof is identical to the one for \'etale sheaves using
Proposition~\ref{prop:illusie} in lieu of \cite[Proposition 2.10]{Illusie}.

\section{Constructible sheaves}\label{sec:cons}
In this section we recall some results concerning the category
$\mathcal D(X)$ from \cite{BS}, \cite{HRS} and \cite{HS}. In this section
$X$ denotes any scheme, unless indicated otherwise.

\subsection{}\label{sec:cons_facts}
Suppose $E$ is a finite extension of $\QQ_\ell$ with ring of integers 
$(\mathcal O_E,\varpi)$. Then 
$\mathcal D(X,\mathcal O_E)=\lim_n\mathcal D(X,\mathcal O_E/\ell^n)
=\lim_n\mathcal D(X,\mathcal O_E/\varpi^n)$
\cite[Proposition 5.1]{HRS},\footnote{As explained in
\cite[\S2.2]{HRS}, the inclusion of the $\infty$-category of
idempotent complete stable $\infty$-categories into that of stable
$\infty$-categories into that of $\infty$-categories preserves small limits 
and filtered colimits, so this limit can be formed in any of these 
$\infty$-categories, and we will say no more about this.}
and if $E$ is now any algebraic extension of $\QQ_\ell$ and $X$ is qcqs,
$\mathcal D(X,\mathcal O_E)=\colim\mathcal D(X,\mathcal O_L)$ and 
$\mathcal D(X,E)=\colim\mathcal D(X,L)$, where the colimit is over those 
finite extensions $E\supset L\supset\QQ_\ell$ \cite[Proposition 5.2]{HRS}.
The same is true with $\mathcal D(X)$ replaced by $\mathcal D_\lis(X)$.

When $E/\QQ_\ell$ is finite, $\mathcal D(X,\mathcal O_E)$ admits another 
description than the one given by \S\ref{sec:notation_categories}: it
coincides with the full subcategory of $\mathcal D(X_\proet,\mathcal O_E)$ 
on the derived $\ell$-complete objects $A$
(i.e. $A=\lim_n(A/\ell^n:=A\otimes_{\ZZ_\ell}\ZZ/\ell^n)$) such that
$A\otimes_\ZZ\ZZ/\ell$ is a perfect-constructible \'etale
sheaf\footnote{i.e. is in the essential image of the functor 
$\mathcal D_\mathrm{cons}(\et X,\mathcal O_E/\ell)\hookrightarrow\mathcal D(\et X,\mathcal O_E/\ell)\to\mathcal D(X_\proet,\mathcal O_E/\ell)$,
where a complex of \'etale $\mathcal O_E/\ell$-sheaves $\mathcal G$ belongs
to $\mathcal D_{\mathrm{cons}}(\et X,\mathcal O_E/\ell)$ if there exists a
finite stratification of $X$ into constructible locally-closed subsets so
that the restriction of $\mathcal G$ to each stratum is \'etale-locally
constant with perfect values.}
\cite[Proposition 7.6]{HRS}.

On any qcqs $X$ on which $\ell$ is invertible and $E/\QQ_\ell$
an algebraic extension, 
$\mathcal D(X,\mathcal O_E)\otimes_{\ZZ_\ell}\QQ_\ell\to\mathcal D(X,E)$ is
an equivalence \cite[Corollary 2.4]{HS}.
The left side coincides with the idempotent completion of the localization
$\mathcal D(X,\mathcal O_E)[\ell^{-1}]$, but for the spaces that appear in
this paper, idempotent completion is superfluous.\footnote{Under weak
finiteness hypotheses to ensure the standard t-structure on
$\mathcal D(X_\proet,\mathcal O_E)$ descends to one on
$\mathcal D(X,\mathcal O_E)$, the Verdier quotient is already
idempotent-complete as the t-structure is bounded \cite[Appendix B.2]{CD}
\cite[Lemma 1.2.4.6]{HA}. See Proposition~\ref{prop:t-structure}.}
If $\mathcal F$ belongs to $\mathcal D(X,E)$, 
we will refer to sheaves $\mathcal F_0$ in $\mathcal D(X,\mathcal O_E)$
such that $\mathcal F_0[\ell^{-1}]=\mathcal F$ as \emph{integral models}
for $\mathcal F$.

Hansen and Scholze \cite[Theorem 2.2]{HS} prove that the
assignment $?\mapsto\mathcal D(?,\Lambda)$
determines an arc-sheaf of $\infty$-categories
$\mathrm{Sch}_{\mathrm{qcqs}}^\op\to\Cat_\infty$.\footnote{Bhatt and Mathew 
\cite{BM} prove the corresponding result for constructible \'etale sheaves.
For the definitions of the arc-topology and the closely related
(and coarser) $v$-topology, see \emph{op. cit.}}
This means that if $Y\to X$ is an arc-cover of qcqs schemes, the map
\begin{equation*}\begin{tikzcd}[column sep=15]
\mathcal D(X)\to\lim_\Delta\Big(\mathcal D(Y^{\bullet/X})
:=\Big(\mathcal D(Y)\arrow[r,shift left]\arrow[r,shift right]
&\mathcal D(Y\times_XY)\arrow[r,shift left=2]\arrow[r]\arrow[r,shift right=2]&\ldots\Big)\Big)
\end{tikzcd}
\end{equation*}
is an equivalence, where this limit is indexed by the simplex category
$\Delta$ and computed in $\Cat_\infty$.
This fact will be used in \S\ref{sec:ula}.

The following standard result allows us to detect that a map of (integral)
constructible sheaves is an isomorphism from the vanishing of its cone 
modulo $\ell$.
\begin{lemma*}\label{lem:reduction_conservative}
	Let $X$ be any scheme and $E$ a finite extension of $\QQ_\ell$.
	Reduction modulo $\ell$ is a conservative functor from the full
	subcategory $\mathcal D_{\mathrm{comp}}(X_\proet,\mathcal O_E)$ of 
	derived $\ell$-complete objects of $\mathcal D(X_\proet,\mathcal O_E)$
	to $\mathcal D(X_\proet,\mathcal O_E/\ell)$.
\end{lemma*}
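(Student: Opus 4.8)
The plan is to reduce the claim to the statement that an object $A$ of $\mathcal D_{\comp}(X_\proet,\mathcal O_E)$ with $A/\ell = 0$ is itself zero; conservativity then follows, since a map $f\colon A\to B$ in $\mathcal D_{\comp}$ has $\Cone(f)$ again derived $\ell$-complete (the full subcategory of derived $\ell$-complete objects is stable, being the kernel of a localization), and $\Cone(f)/\ell = \Cone(f/\ell)$ since reduction mod $\ell$ is exact; so $f/\ell$ an isomorphism forces $\Cone(f)/\ell = 0$, hence $\Cone(f) = 0$, hence $f$ is an isomorphism. So everything comes down to: derived $\ell$-complete and $A/\ell = 0$ imply $A = 0$.

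For this I would argue as follows. Since $\ell = \varpi^e u$ in $\mathcal O_E$ with $u$ a unit and $e = e(E/\QQ_\ell)$, killing $\ell$ and killing $\varpi^e$ agree, and an easy dévissage (the exact triangles $A/\varpi^{i}\to A/\varpi^{i+1}\to A/\varpi$ obtained from $0\to\mathcal O_E/\varpi\xrightarrow{\varpi^i}\mathcal O_E/\varpi^{i+1}\to\mathcal O_E/\varpi^i\to 0$) shows $A/\ell = 0$ is equivalent to $A/\varpi = 0$, which in turn forces $A/\varpi^n = 0$ for all $n$ by the same triangles. Now invoke derived $\ell$-completeness: $A \xrightarrow{\ \sim\ } \lim_n A/\ell^n$ (equivalently $\lim_n A/\varpi^{n}$, the two pro-systems being cofinal), and the right-hand side is a limit of zero objects, hence zero. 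Therefore $A = 0$.

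The one point deserving care — and the only plausible obstacle — is the exactness of reduction modulo $\ell$ and the compatibility $\Cone(f)/\ell\simeq\Cone(f/\ell)$: but $-\otimes_{\ZZ_\ell}\ZZ/\ell$ is a functor of stable $\infty$-categories (it is $-\otimes_{\mathcal O_E}\mathcal O_E/\ell$, computed via the two-term complex $[\mathcal O_E\xrightarrow{\ell}\mathcal O_E]$), so it is exact and commutes with finite colimits, and in particular with cones; this is standard and I would simply cite it. One should also note that the target category in the statement is $\mathcal D(X_\proet,\mathcal O_E/\ell)$ (all sheaves of modules, not merely the constructible ones), so no constructibility input is needed — the argument is purely about derived $\ell$-completeness and lives already at the level of $\mathcal D(X_\proet,\mathcal O_E)$. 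This is why the lemma is stated for the ambient pro-étale derived category rather than for $\mathcal D(X)$.
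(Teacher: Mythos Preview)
Your proof is correct and follows essentially the same approach as the paper: reduce to showing that $A/\ell=0$ implies $A=0$, then d\'evissage via the short exact sequences of cyclic modules to get $A/\ell^n=0$ for all $n$, and conclude from derived $\ell$-completeness. The only cosmetic difference is that the paper works directly with powers of $\ell$ (using $0\to\ZZ/\ell^{n-1}\xra\ell\ZZ/\ell^n\to\ZZ/\ell\to0$) rather than passing through the uniformizer $\varpi$.
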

\begin{proof}
	As $\mathcal D_{\mathrm{comp}}(X_\proet,\mathcal O_E)$ is a stable
	subcategory of $\mathcal D(X_\proet,\mathcal O_E)$, it suffices to show
	that if $A\in\mathcal D_{\mathrm{comp}}(X_\proet,\mathcal O_E)$ has
	$A/\ell=0$, then $A=0$. 
	The exact sequence $0\to\ZZ/\ell^{n-1}\xra\ell\ZZ/\ell^n\to\ZZ/\ell\to0$
	for $n>1$ and induction on $n$ shows that $A/\ell^n=0$ for all $n$. As
	$A=\lim_nA/\ell^n$, we're done.
\end{proof}
\subsection{}\label{sec:stalks}
Let $X$ be any scheme and $x\in X$. If $\mathcal F$ is in 
$\mathcal D(X_\proet,\Lambda)$, we denote by $\mathcal F_{\overline x}$ the
complex $\Gamma(X_{\overline x},\mathcal F)$ in $\mathcal D(\Lambda)$, the
$\infty$-category of chain complexes of $\Lambda$-modules up to
quasi-isomorphism.
Although the site $X_\proet$ has enough points as it gives rise to a 
coherent topos, the stalk functors
$\mathcal F\mapsto\mathcal F_{\overline x}$,
as $x$ runs over the points of $X$, do not form a conservative family of
functors on $\mathcal D(X_\proet,\ZZ)$ \cite[Example 4.2.11]{BS}.
\begin{lemma*}\label{lem:stalks}
	Suppose $X$ is a scheme, and suppose that either $\Lambda$ is integral
	and $X$ is qcqs, or that every constructible closed subset of $X$ has
	Zariski-locally finitely many irreducible components.
	Then the stalks $\mathcal F\mapsto\mathcal F_{\overline x}$, as $x$ runs
	over the points of $X$, form a conservative family of functors for
	$\mathcal D(X,\Lambda)$. We have 
	$\mathcal F_{\overline x}=\Gamma(\overline x,\overline x^*\mathcal F)\in 
	\operatorname{Perf}_\Lambda\subset\mathcal D(\Lambda)$, the full
	subcategory on the perfect complexes.
\end{lemma*}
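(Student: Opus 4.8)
The plan is to reduce the assertion to the statement that a constructible sheaf $\mathcal F$ all of whose geometric stalks $\mathcal F_{\overline x}$ vanish is itself zero: granting this, the family $\{(-)_{\overline x}\}$ is conservative because $\mathcal D(X,\Lambda)$ is stable and $(-)_{\overline x}=\Gamma(X_{\overline x},-)$ commutes with cofibers. Since being zero is Zariski-local, and constructibility, the qcqs hypothesis, and the hypothesis that constructible closed subsets have Zariski-locally finitely many irreducible components all descend to affine opens, I would first reduce to $X$ affine. It is convenient to dispatch the stalk description at the outset, as it feeds into the rest. The pullback $\overline x^*\mathcal F$ is a constructible sheaf on the point $\overline x=\Spec\sep{k(x)}$, whose only partition is the trivial one; hence it is lisse, and constructible sheaves on a point are exactly the perfect $\Lambda$-complexes, so $\overline x^*\mathcal F\in\operatorname{Perf}_\Lambda$. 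Moreover $X_{\overline x}=\Spec\mathcal O_{X,\overline x}^{\mathrm{sh}}$ is $w$-strictly-local --- a connected henselian local scheme with separably closed residue field --- so $\Gamma(X_{\overline x},-)$ is a fiber functor of $X_\proet$; on a torsion sheaf it computes the stalk $\overline x^*(-)$ at the closed point, and passing to the $\ell$-adic limit (for $\mathcal O_E$ with $E/\QQ_\ell$ finite, using that $\overline x^*\mathcal F$ is perfect, hence derived $\ell$-complete), then to the colimit inverting $\ell$ (for $E$ rational), then to the colimit over finite subextensions (for $E/\QQ_\ell$ infinite), one obtains $\mathcal F_{\overline x}=\Gamma(X_{\overline x},\mathcal F)=\Gamma(\overline x,\overline x^*\mathcal F)$ in all cases; in particular $\Gamma(X_{\overline x},-)$ agrees on constructible sheaves with $\overline x^*$, which is t-exact for the standard t-structure.

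For the integral case $\Lambda=\mathcal O_E$ I would argue as follows. Using $\mathcal D(X,\mathcal O_E)=\colim_L\mathcal D(X,\mathcal O_L)$ over the finite subextensions $E\supset L\supset\QQ_\ell$ \cite[Proposition 5.2]{HRS}, the sheaf $\mathcal F$ is the base change of some $\mathcal F_L\in\mathcal D(X,\mathcal O_L)$; since $\mathcal O_E$ is faithfully flat over the discrete valuation ring $\mathcal O_L$ and $\mathcal F_{\overline x}=(\mathcal F_L)_{\overline x}\otimes_{\mathcal O_L}\mathcal O_E$, vanishing of all $\mathcal F_{\overline x}$ forces vanishing of all $(\mathcal F_L)_{\overline x}$, so I may assume $E/\QQ_\ell$ finite. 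Then $\mathcal F$ is derived $\ell$-complete, so by Lemma~\ref{lem:reduction_conservative} it is enough to show $\mathcal F/\ell=0$. By \cite[Proposition 7.6]{HRS} the sheaf $\mathcal F/\ell$ lies in the essential image of $\mathcal D_{\mathrm{cons}}(\et X,\mathcal O_E/\ell)\hookrightarrow\mathcal D(X_\proet,\mathcal O_E/\ell)$, an embedding that identifies $\Gamma(X_{\overline x},-)$ with the classical étale stalk, and $(\mathcal F/\ell)_{\overline x}=\mathcal F_{\overline x}/\ell=0$ for every $x$. As geometric-point stalks are conservative on constructible étale sheaves with torsion coefficients, $\mathcal F/\ell=0$, hence $\mathcal F=0$.

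Finally the rational (and general) case, in which the finiteness hypothesis is in force. By Proposition~\ref{prop:t-structure} the standard t-structure on $\mathcal D(X_\proet,\Lambda)$ descends to a bounded t-structure on $\mathcal D(X,\Lambda)$; since the stalk functor is t-exact, vanishing of all $\mathcal F_{\overline x}$ forces vanishing of all $(\mathcal H^i\mathcal F)_{\overline x}$, and a bounded complex with vanishing cohomology sheaves is zero, so I reduce to $\mathcal F$ in the heart. On the affine $X$ choose a finite partition $X=\coprod_i U_i$ into constructible locally closed subschemes with $\mathcal F|_{U_i}$ lisse, and refine it so that each $U_i$ is connected --- possible because on the quasi-compact $X$ the finiteness hypothesis gives each constructible closed subset, hence each $U_i$, finitely many irreducible and so finitely many connected components. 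On a connected $U_i$ the sheaf $\mathcal F|_{U_i}$ is a lisse abelian sheaf with vanishing stalks, hence a continuous representation of the pro-étale fundamental group of $U_i$ on a finitely generated $\Lambda$-module (a finite-dimensional $E$-vector space when $\Lambda=E$) with zero underlying module, hence zero; an induction on the number of strata via the recollement triangles $j_!j^*\to\mathrm{id}\to i_*i^*$ then gives $\mathcal F=0$. The main obstacle is structural and lies entirely here: it is the two black boxes invoked in this case --- descent and boundedness of the standard t-structure, and the fact that lisse abelian $\Lambda$-sheaves are detected by their geometric stalks --- each available under only the finiteness hypothesis and exactly the facts one sidesteps in the integral case by reducing modulo $\ell$; by contrast the limit/colimit bookkeeping in the stalk identification of the first paragraph is routine once constructible sheaves are known to have perfect stalks.
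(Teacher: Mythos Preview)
Your proof is correct and broadly parallel to the paper's, but the two diverge in the ``finiteness'' case and in the stalk identification.

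For the identification $\mathcal F_{\overline x}=\Gamma(\overline x,\overline x^*\mathcal F)$, the paper argues directly: $\overline x^*$ is t-exact on $\mathcal D(X_\proet,\Lambda)$, so $\Gamma(\overline x,\overline x^*\mathcal F)$ is the filtered colimit of $\Gamma(U,\mathcal F)$ over pro-\'etale neighborhoods of $\overline x$, which is $\Gamma(X_{\overline x},\mathcal F)$ by \cite[\href{https://stacks.math.columbia.edu/tag/0993}{\texttt{0993}}]{Stacks}. This avoids your limit/colimit bootstrap through torsion coefficients, which is correct but requires knowing that $R\Gamma(X_{\overline x},-)$ commutes with the relevant filtered colimits (e.g.\ $-\otimes_{\mathcal O_E}E$), a point you leave implicit.

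For conservativity under the finiteness hypothesis, the paper does not pass through the t-structure or connected strata at all: it simply invokes \cite[Theorem 4.13]{HRS}, which says that on a stratum $Z$ with locally finitely many irreducible components a lisse sheaf is pro-\'etale-locally of the form $\underline N\otimes_{\underline\Lambda}\Lambda$ for a perfect complex $N$; the stalk at any $\overline x$ in such a neighborhood is $N$, so $N=0$ forces the sheaf to vanish on that neighborhood, hence everywhere. Your route---reduce to the heart via Proposition~\ref{prop:t-structure}, refine to connected strata, interpret abelian lisse sheaves as representations of $\pi_1^\proet$---reaches the same conclusion but is longer, and the invocation of $\pi_1^\proet$ strictly speaking needs the strata to be locally topologically noetherian (as in \cite[\S7.4]{BS}), which does not obviously follow from the stated hypothesis. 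This is harmless in the paper's applications (varieties are noetherian) and easily bypassed by arguing directly from pro-\'etale-local constancy as the paper does, but it is worth flagging. The integral case is handled identically in both proofs.
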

\begin{proof}
	It follows from the w-contractibility of $\overline x$ and the
	definition of the functor $\overline x^*$ that
	$\Gamma(\overline x,\overline x^*\mathcal F)=\colim\Gamma(U,\mathcal F)$,
	where the (filtered) colimit is over all pro-\'etale neighborhoods $U$ 
	of $\overline x$.\footnote{This relies on the t-exactness of 
	$\overline x^*:D(X_\proet,\Lambda)\to D(\overline x,\Lambda)$ for the
	usual ($p=0$) t-structure. Right t-exactness is clear as
	$H^0\overline x^*:D(X_\proet,\Lambda)^\heartsuit\to
	D(\overline x_\proet,\Lambda)^\heartsuit$ is a left adjoint, and left
	t-exactness follows as $\Gamma(U,-)$ (for $U\in X_\proet$) is left
	t-exact on $D(X_\proet,\Lambda)$ and $H^0\overline x^*$ on 
	$D(X_\proet,\Lambda)^\heartsuit$ returns the sheafification of the 
	presheaf which on a condensed set returns a filtered colimit of such
	$H^0\Gamma(U,-)$.}
	This coincides with $\Gamma(X_{\overline x},\mathcal F)$ by
	\cite[\href{https://stacks.math.columbia.edu/tag/0993}{\texttt{0993}}]{Stacks}.
	So far this is true for any $\mathcal F\in\mathcal D(X_\proet,\Lambda)$,
	but if $\mathcal F\in\mathcal D(X,\Lambda)$, this is a perfect complex
	since
	$\mathcal D_\lis(\overline x,\Lambda)\simeq\operatorname{Perf}_\Lambda$
	\cite[Lemma 1.2]{HRS}.

	Conservativity amounts to showing that a constructible sheaf 
	$\mathcal F$ with zero stalks is zero. Replacing $X$ by an affine open,
	it will suffice to show that its restriction to each constructible
	stratum $Z$ in a stratification witnessing the constructibility of 
	$\mathcal F$ is zero. If $Z$ has locally a finite number of irreducible
	components, then the restriction of $\mathcal F$ to $Z$ is
	(pro-\'etale)-locally perfect-constant \cite[Theorem 4.13]{HRS}.
	If $\overline x$ is a geometric point of $Z$ and $U\to Z$ is a 
	pro-\'etale neighborhood of $\overline x$ so that 
	$\mathcal F|_U\simeq \underline N\otimes_{\underline\Lambda}\Lambda_U$
	for some $N\in\operatorname{Perf}_\Lambda$, then
	$\Gamma(\overline x,\overline x^*\mathcal F)=N$, whence necessarily
	$N=0$.
	
	If $\Lambda$ is integral and $X$ is qcqs, then $\mathcal F$ comes from
	$\mathcal D(X,\mathcal O_E)$ for $E/\QQ_\ell$ finite, so we may assume
	$\mathcal F$ is derived $\ell$-complete. Reduction modulo $\ell$
	commutes with taking stalks ($\ZZ/\ell$ is perfect), so if $\mathcal F$
	has zero stalks the same is true of $\mathcal F/\ell$.
	As $\mathcal F/\ell$ is an \'etale sheaf, this implies its vanishing on
	$X$, so $\mathcal F=0$ by Lemma~\ref{sec:cons_facts}.
\end{proof}
\subsection{}\label{sec:vanishing_cycles_mod_ell}
The nearby and vanishing cycles are constructible
\cite[Theorem 4.1]{HS}, and their formation commutes with
reduction mod. $\ell$ \cite[Lemma 6.5.9(3)]{BS}.
\begin{lemma*}\label{lem:vanishing_cycles_mod_ell}
	Let $V$ be a strictly henselian dvr with $\ell\in V^\times$,
	$f:X\to\Spec V$ a separated map of finite type and
	$\mathcal F\in D(X)$ with $\Lambda=\mathcal O_E$, $E/\QQ_\ell$ finite. 
	Then $\psi_f(\mathcal F)\otimes\ZZ/\ell=\psi_f(\mathcal F/\ell)$ and
	$\phi_f(\mathcal F)\otimes\ZZ/\ell=\phi_f(\mathcal F/\ell)$ are \'etale
	perfect-constructible on the special fiber.
\end{lemma*}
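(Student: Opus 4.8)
The plan is to reduce the statement to known facts about nearby cycles and the behavior of the six functors with respect to reduction modulo $\ell$. First I would recall that the nearby cycles $\psi_f(\mathcal F)$ are defined as $i^*j_*(\mathcal F|_{X_\eta})$, where $j$ and $i$ are the open and closed immersions associated to the normalization $V'$ of $V$ in a separable closure of $\Frac V$ (the distinction between $V$ and $V'$ being immaterial, as noted in the excerpt). Since $\mathcal F$ has $\mathcal O_E$-coefficients with $E/\QQ_\ell$ finite, $\mathcal F$ is derived $\ell$-complete and $\mathcal F/\ell := \mathcal F\otimes_{\ZZ_\ell}\ZZ/\ell$ is an \'etale perfect-constructible sheaf of $\mathcal O_E/\ell$-modules. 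The key input is that the formation of nearby cycles commutes with reduction modulo $\ell$, which is \cite[Lemma 6.5.9(3)]{BS}: this gives the natural isomorphism $\psi_f(\mathcal F)\otimes\ZZ/\ell\simeq\psi_f(\mathcal F/\ell)$. Concretely, this amounts to the commutation of $i^*$, $j_*$, and restriction to the generic fiber with $-\otimes_{\ZZ_\ell}\ZZ/\ell$; the only nontrivial point is the commutation of $j_*$ with reduction mod $\ell$, which holds because $\ZZ/\ell$ is a perfect complex over $\ZZ_\ell$ and because one has the requisite constructibility to ensure $j_*$ preserves the relevant subcategory — this is precisely the content of the cited lemma.

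Next I would deduce the statement for vanishing cycles. Since $\phi_f(\mathcal F) = \cofib(i^*\mathcal F\to\psi_f(\mathcal F))$ and the functor $-\otimes_{\ZZ_\ell}\ZZ/\ell$ is exact (it is tensoring with a perfect complex, hence commutes with finite colimits and in particular with cofibers), we get
\[
\phi_f(\mathcal F)\otimes\ZZ/\ell \simeq \cofib\big(i^*\mathcal F\otimes\ZZ/\ell\to\psi_f(\mathcal F)\otimes\ZZ/\ell\big)\simeq\cofib\big(i^*(\mathcal F/\ell)\to\psi_f(\mathcal F/\ell)\big) = \phi_f(\mathcal F/\ell),
\]
using the previous paragraph and the (clear) commutation of $i^*$ with reduction mod $\ell$, together with the naturality of the map $i^*\mathcal F\to\psi_f(\mathcal F)$ under the mod-$\ell$ functor.

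Finally, for the perfect-constructibility claim on the special fiber: by \cite[Theorem 4.1]{HS}, $\psi_f(\mathcal F)$ and hence $\phi_f(\mathcal F)$ are constructible sheaves (with $\mathcal O_E$-coefficients) on the geometric special fiber. Their reductions $\psi_f(\mathcal F)\otimes\ZZ/\ell$ and $\phi_f(\mathcal F)\otimes\ZZ/\ell$ are thus derived-$\ell$-complete-modulo-$\ell$, i.e.\ honest $\mathcal O_E/\ell$-sheaves, and being the mod-$\ell$ reductions of constructible $\mathcal O_E$-sheaves they lie in the essential image of $\mathcal D_{\mathrm{cons}}(\et{Y},\mathcal O_E/\ell)$ by the description of $\mathcal D(Y,\mathcal O_E)$ recalled in \S\ref{sec:cons_facts}; equivalently, one may invoke directly that $\psi_f(\mathcal F/\ell)$ and $\phi_f(\mathcal F/\ell)$ are \'etale perfect-constructible since $\mathcal F/\ell$ is and the nearby/vanishing cycle functors preserve perfect-constructibility in the torsion setting (this is classical, and is also subsumed by \cite[Theorem 4.1]{HS} applied with torsion coefficients). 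The only step requiring genuine care is the commutation of $j_*$ with reduction mod $\ell$, but this is precisely \cite[Lemma 6.5.9(3)]{BS}, so there is no real obstacle; the lemma is essentially a bookkeeping exercise assembling cited results.
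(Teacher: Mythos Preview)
Your proposal is correct and matches the paper's approach exactly: the paper does not give a separate proof but simply records the lemma as a consequence of \cite[Theorem 4.1]{HS} (constructibility) and \cite[Lemma 6.5.9(3)]{BS} (commutation of nearby cycles with reduction mod $\ell$), which are precisely the two inputs you invoke. Your write-up just makes explicit the passage from $\psi_f$ to $\phi_f$ via exactness of $-\otimes\ZZ/\ell$, which the paper leaves implicit.
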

\subsection{}
The usual ($p=0$) t-structure on $D(X_\proet,\Lambda)$ restricts to
one on $D(X)$ under a mild finiteness assumption.

\begin{proposition-cite*}[{\cite[Theorem 6.2]{HRS}}]\label{prop:t-structure}
	Suppose that $X$ is a scheme with the property that every constructible
	subset of $X$ has locally finitely many irreducible components. 
	Then the usual t-structure on $D(X_\proet,\Lambda)$ restricts to a 
	bounded t-structure on $D(X)$. The heart $D(X)^\heartsuit$ consists of
	the full subcategory of $D(X_\proet,\Lambda)^\heartsuit$ on those 
	sheaves $\mathcal G$ with the property that for every open affine
	$U\subset X$ there is a finite stratification of $U$ into constructible
	locally closed subsets $U_i$ so that $\mathcal G|_{U_i}$ is locally on 
	$(U_i)_\proet$ isomorphic to 
	$\underline M_i\otimes_{\underline\Lambda}\Lambda$ for $M_i$ a 
	finitely-presented $\Lambda$-module.
\end{proposition-cite*}
In this case, lisse sheaves admit a familiar description.
\begin{corollary-cite*}[{\cite[Corollary 6.3]{HRS}}]\label{cor:lisse_t}
	Suppose that $X$ is a qcqs scheme with locally finitely many
	irreducible components. Then $M$ in $D(X_\proet,\Lambda)$ is lisse if and only if
	$M$ is bounded and each $\mathcal H^iM$ is locally on $X_\proet$
	isomorphic to $\underline N\otimes_{\underline\Lambda}\Lambda$ for some 
	finitely-presented $\Lambda$ module $N$.
\end{corollary-cite*}
\subsection{}
We now turn to some facts that require additional hypotheses of
regularity.
\begin{proposition*}\label{prop:lisse_equiv}
	Assume $X$ is a topologically noetherian and geometrically unibranch
	scheme, and $E/\QQ_\ell$ an algebraic extension.
	Then $\mathcal D_{\lis}(X,\mathcal O_E)\otimes_{\mathcal O_E}E\to \mathcal D_{\lis}(X,E)$
	is an equivalence.
\end{proposition*}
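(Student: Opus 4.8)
The plan is to prove that the displayed functor is fully faithful by a formal argument, and essentially surjective by reducing to a statement about continuous representations of the \'etale fundamental group.

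\emph{Well-definedness and full faithfulness.} Rationalization $\mathcal D(X,\mathcal O_E)\to\mathcal D(X,E)$ is symmetric monoidal, hence carries dualizable objects to dualizable objects, so it restricts to a functor $\mathcal D_\lis(X,\mathcal O_E)\otimes_{\mathcal O_E}E\to\mathcal D_\lis(X,E)$. To see this functor is fully faithful I would fit it into a commuting square whose remaining edges are: the equivalence $\mathcal D(X,\mathcal O_E)\otimes_{\ZZ_\ell}\QQ_\ell\xrightarrow{\sim}\mathcal D(X,E)$ of \cite[Corollary 2.4]{HS} recalled in \S\ref{sec:cons_facts} (available since a topologically noetherian scheme is qcqs); the full inclusion $\mathcal D_\lis(X,E)\hookrightarrow\mathcal D(X,E)$; and the $\ell$-localization of the full stable inclusion $\mathcal D_\lis(X,\mathcal O_E)\hookrightarrow\mathcal D(X,\mathcal O_E)$. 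Since $\ell$-localization of stable $\infty$-categories computes mapping spectra between objects of a full stable subcategory simply by inverting $\ell$ on them, this last functor stays fully faithful, and chasing the square yields the claim. What remains is essential surjectivity, and this is where the hypotheses on $X$ do real work.

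\emph{Reductions.} Using $\mathcal D_\lis(X,E)=\colim_L\mathcal D_\lis(X,L)$ and $\mathcal D_\lis(X,\mathcal O_E)=\colim_L\mathcal D_\lis(X,\mathcal O_L)$ over finite subextensions $\QQ_\ell\subseteq L\subseteq E$ (\S\ref{sec:cons_facts}), any object of $\mathcal D_\lis(X,E)$ descends to $\mathcal D_\lis(X,L)$ for some finite $L$, so I may assume $E/\QQ_\ell$ finite. A topologically noetherian scheme has finitely many connected components, each clopen and again topologically noetherian and geometrically unibranch, and $\mathcal D_\lis$ takes finite disjoint unions to products, so I may assume $X$ connected. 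As $X$ is topologically noetherian, every constructible subset has finitely many irreducible components, so Corollary~\ref{cor:lisse_t} applies: an object of $\mathcal D_\lis(X,E)$ is bounded with abelian lisse cohomology sheaves, and rationalization is t-exact since $E$ is flat over $\mathcal O_E$. Because $\mathcal D_\lis(X,\mathcal O_E)\otimes_{\mathcal O_E}E$ and $\mathcal D_\lis(X,E)$ are closed under shifts and cones, writing a bounded lisse $E$-sheaf as a finite iterated extension of shifts of its cohomology sheaves and lifting the gluing maps to $\mathcal O_E$-coefficients after clearing denominators by powers of $\ell$ (the same manoeuvre used in the dévissage proving part~\ref{th:SS:integral_model} of Theorem~\ref{th:SS}), essential surjectivity reduces to the hearts: every abelian lisse $E$-sheaf on the connected scheme $X$ should be the rationalization of an abelian lisse $\mathcal O_E$-sheaf.

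\emph{The fundamental-group argument.} This is the step I expect to be the main obstacle, since geometric unibranch-ness enters only here and only through results that must be cited precisely. For $X$ connected, topologically noetherian and geometrically unibranch, the pro-\'etale fundamental group $\pi_1^{\proet}(X,\overline x)$ coincides with the profinite \'etale fundamental group $\pi_1^{\etale}(X,\overline x)$ \cite{BS}, and one has $\mathcal D_\lis(X,\Lambda)^\heartsuit\simeq\mathrm{Rep}^{\mathrm{cont}}_\Lambda(\pi_1^{\etale}(X,\overline x))$ for $\Lambda=\mathcal O_E$ and $\Lambda=E$, compatibly with rationalization. Given a continuous representation $\rho\colon\pi_1^{\etale}(X,\overline x)\to\GL(V)$ on a finite-dimensional $E$-vector space $V$, the image $\rho(\pi_1^{\etale}(X,\overline x))$ is a compact subgroup of $\GL(V)\cong\GL_n(E)$ and therefore stabilizes some $\mathcal O_E$-lattice $\Lambda_V\subset V$; the restriction $\pi_1^{\etale}(X,\overline x)\to\GL(\Lambda_V)$ corresponds to an abelian lisse $\mathcal O_E$-sheaf whose rationalization is the abelian lisse $E$-sheaf attached to $\rho$. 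This completes essential surjectivity, and together with the full faithfulness above, the proof. The delicate points are the two invocations of Bhatt–Scholze — profiniteness of $\pi_1^{\proet}$ under geometric unibranch-ness, and the description of $\mathcal D_\lis(X,-)^\heartsuit$ as a category of continuous $\pi_1$-representations — together with the compatibility of these identifications with $-\otimes_{\mathcal O_E}E$; everything else is formal or a routine dévissage.
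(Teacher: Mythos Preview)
Your proof is correct and follows essentially the same approach as the paper: reduce to $E/\QQ_\ell$ finite and $X$ connected, handle the heart via the identification $\pi_1^{\proet}(X,x)\simeq\pi_1^{\etale}(X,x)$ for geometrically unibranch $X$ together with the lattice-stabilization argument from compactness, and then d\'evisser on amplitude by lifting maps after clearing denominators. The only difference is that you justify full faithfulness explicitly via the square with \cite[Corollary 2.4]{HS}, whereas the paper simply asserts that only essential surjectivity needs to be checked.
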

\begin{proof}
Only essential surjectivity needs to be checked; i.e. the existence of
integral models. We may assume $E/\QQ_\ell$ is finite and $X$ connected with
geometric point $x\to X$. In this case,
$\pi_1^{\proet}(X,x)\simeq\pi_1^{\text{\'et}}(X,x)$
as $X$ is geometrically unibranch \cite[Lemma 7.4.10]{BS}, so essential
surjectivity is clear in degree zero; i.e. for abelian sheaves. Indeed, an
$E$-local system on $X$ is classified by a continuous homomorphism
$\pi_1^\proet(X,x)\to\Aut E$, and the \'etale fundamental group is
profinite, hence compact, hence its action on a finite $E$-vector space
stabilizes a lattice. As any lisse sheaf is bounded, we can induct on
amplitude, using that $\mathcal D_\lis(X,\Lambda)$ is stable.
(In more words, given a map $f:\mathcal F\to\mathcal G$ of lisse 
$E$-sheaves with integral models $\mathcal F_0,\mathcal G_0$, if $n$ is
large enough, $\ell^nf$ is the rationalization of some map 
$f_0:\mathcal F_0\to\mathcal G_0$ in $D(X,\mathcal O_E)$, and
$\Cone\ell^nf\simeq\Cone f$.)
\end{proof}
\subsection{}
The same is true for perverse sheaves, although this won't be used in the
sequel.
It's easy to see that if $X$ is a separated variety and $E/\QQ_\ell$ is an
algebraic extension, the functor 
$-\otimes_{\mathcal O_E}E:\mathcal D(X,\mathcal O_E)\to\mathcal D(X,E)$ is
perverse t-exact. Let $\Perv(X,?)$ denote the heart of $\mathcal D(X,?)$
for the middle perverse t-structure.
\begin{proposition*}
	Let $X$ be a separated variety and $E/\QQ_\ell$ an algebraic extension.
	Then $\Perv(X,\mathcal O_E)\otimes_{\mathcal O_E}E\to\Perv(X,E)$ is an
	equivalence.
\end{proposition*}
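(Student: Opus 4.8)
The plan is to argue as in the proof of Proposition~\ref{prop:lisse_equiv}, but with a simplification: whereas in the lisse case one must build integral models up by induction on amplitude (since truncation in $\mathcal D(X,\mathcal O_E)$ leaves $\mathcal D_\lis$), here one can produce a perverse integral model directly by perverse truncation, because $\Perv(X,\mathcal O_E)$ is the heart of a t-structure on all of $\mathcal D(X,\mathcal O_E)$. Concretely, write $r\colon\mathcal D(X,\mathcal O_E)\to\mathcal D(X,E)$, $\mathcal F_0\mapsto\mathcal F_0\otimes_{\mathcal O_E}E$, for the rationalization functor; by \cite[Corollary 2.4]{HS} it identifies $\mathcal D(X,E)$ with the localization $\mathcal D(X,\mathcal O_E)[\ell^{-1}]$ (idempotent completion being unnecessary for a variety, as noted above), and by the remark just preceding the proposition it is perverse t-exact. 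Hence $r$ sends $\Perv(X,\mathcal O_E)$ into $\Perv(X,E)$, and it kills every torsion perverse $\mathcal O_E$-sheaf — such a sheaf, being bounded, constructible, and supported on finitely many strata of a variety, is annihilated by a fixed power of $\ell$ — so $r$ factors through the Serre quotient of $\Perv(X,\mathcal O_E)$ by its torsion subcategory; this quotient is the isogeny category $\Perv(X,\mathcal O_E)\otimes_{\mathcal O_E}E$, in keeping with the $\mathcal D_\lis$-level statement above, and the resulting functor is the one to be shown an equivalence.

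For essential surjectivity, given $\mathcal F\in\Perv(X,E)$ I would choose any $\widetilde{\mathcal F}\in\mathcal D(X,\mathcal O_E)$ with $r\widetilde{\mathcal F}\simeq\mathcal F$ (possible by \cite[Corollary 2.4]{HS}); then, $r$ being perverse t-exact and hence commuting with perverse truncation, ${^p\mathcal H^0}\widetilde{\mathcal F}$ is a perverse $\mathcal O_E$-sheaf with $r({^p\mathcal H^0}\widetilde{\mathcal F})\simeq{^p\mathcal H^0}(r\widetilde{\mathcal F})={^p\mathcal H^0}\mathcal F=\mathcal F$, i.e. a perverse integral model for $\mathcal F$.

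For full faithfulness, fix $\mathcal F_0,\mathcal G_0\in\Perv(X,\mathcal O_E)$. On one side, inverting the central element $\ell$ in the abelian category $\Perv(X,\mathcal O_E)$ gives $\Hom_{\Perv(X,\mathcal O_E)\otimes_{\mathcal O_E}E}(\mathcal F_0,\mathcal G_0)=\Hom_{\Perv(X,\mathcal O_E)}(\mathcal F_0,\mathcal G_0)[\ell^{-1}]$, since the $\ell^n\mathcal F_0\subseteq\mathcal F_0$ are cofinal among subobjects with torsion quotient, and dually for $\mathcal G_0$. On the other side, $\mathcal F_0$ being bounded and constructible, $\fib(\ell^n\colon\mathcal F_0\to\mathcal F_0)$ is killed by $\ell^n$, hence is one of the objects inverted by $r$, so the maps $\ell^n\colon\mathcal F_0\to\mathcal F_0$ are cofinal in the diagram of roofs computing $\Hom$ in the Verdier quotient $\mathcal D(X,\mathcal O_E)[\ell^{-1}]=\mathcal D(X,E)$; therefore $\Hom_{\mathcal D(X,E)}(r\mathcal F_0,r\mathcal G_0)=\Hom_{\mathcal D(X,\mathcal O_E)}(\mathcal F_0,\mathcal G_0)[\ell^{-1}]$. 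Since $\mathcal F_0$ and $\mathcal G_0$ lie in the heart of the perverse t-structure, $\Hom_{\mathcal D(X,\mathcal O_E)}(\mathcal F_0,\mathcal G_0)=\Hom_{\Perv(X,\mathcal O_E)}(\mathcal F_0,\mathcal G_0)$ and likewise downstairs; combining the three identifications, the functor is fully faithful. I do not expect a genuine obstacle here: the argument is formal once perverse t-exactness of $r$ is granted. The only place wanting care is the uniformity of the power of $\ell$ killing the torsion in the two cofinality assertions, which is available because a constructible $\mathcal O_E$-sheaf on a variety is bounded with finitely presented stalks and $\ell$ is topologically nilpotent in $\mathcal O_E$.
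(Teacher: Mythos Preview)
Your proof is correct and takes a genuinely different, more streamlined route than the paper's. For essential surjectivity, the paper reduces to $E/\QQ_\ell$ finite and then builds perverse integral models by induction along a Jordan--H\"older series: for each irreducible constituent $j_{!*}(\mathcal L[n])$ it chooses a lattice $\mathcal L_0$ for the local system $\mathcal L$ (using that the support is geometrically unibranch) and verifies, via Deligne's formula \cite[Proposition 2.2.4]{BBD}, that $j_{!*}$ commutes with $-\otimes_{\mathcal O_E}E$. Your argument bypasses all of this: take \emph{any} integral model $\widetilde{\mathcal F}\in\mathcal D(X,\mathcal O_E)$ and apply ${^p\mathcal H^0}$, using only the perverse t-exactness of rationalization recorded just before the statement. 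This is cleaner for the bare proposition. What the paper's longer argument buys is the side result that intermediate extension commutes with rationalization, which is invoked later (in the discussion following Proposition~\ref{prop:B1.7}) to produce integral models of the explicit shape $j_{!*}(\mathcal L_0[n])$. Your full-faithfulness paragraph is fine but not strictly needed: the paper simply says ``only essential surjectivity needs to be checked,'' as in the lisse case, because $\Perv(X,\mathcal O_E)\otimes_{\mathcal O_E}E$ sits fully faithfully inside $\mathcal D(X,\mathcal O_E)[\ell^{-1}]\simeq\mathcal D(X,E)$ with $\Hom$s in the heart localizing identically on both sides.
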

\begin{proof}
	Again, only essential surjectivity needs to be checked, so we may assume
	$E/\QQ_\ell$ is finite. Proceeding
	along a Jordan-H\"older series for some $\mathcal F$ in 
	$\Perv(X,E)$, we may assume $\mathcal F$ is (up to a shift)
	the intermediate extension of some $E$-local system $\mathcal L$ on an
	irreducible subvariety $Y\subset X$ with the property that the reduction 
	$(Y_{\alg k})_{\mathrm{red}}$ is smooth over $\alg k$.
	In particular, $Y$ is geometrically unibranch,\footnote{Pick a geometric
	point $y\to Y$; then the strict localization $Y_y$ of $Y$ at $y$
	coincides with that of $Y_{\sep k}$ at $y$. The extension
	$\alg k/\sep k$ being purely inseparable, the base change
	$(Y_y)_{\alg k}$ is homeomorphic to $Y_y$ and coincides with the strict
	localization $(Y_{\alg k})_y$ of $Y_{\alg k}$ at $y$ (as the spectrum of 
	a filtered colimit of strictly henselian local rings along local
	homomorphisms). Quotienting $Y_{\alg k}$ by nilpotents it becomes 
	smooth, so $(Y_{\alg k})_y$ has a unique minimal prime and we're done.}
	so we may find a lattice for $\mathcal L$. It remains only to show that
	$-\otimes\QQ_\ell$ commutes with intermediate extension. This is
	immediate from Deligne's formula \cite[Proposition 2.2.4]{BBD} in light
	of the fact that $j_*$ and $\tau^F_{\leq0}$ (with the notation there)
	commute with $-\otimes\QQ_\ell$, the former by \cite[Lemma 5.3]{HRS}
	and the latter by the formula
	$\tau_{\leq0}^F\mathcal G=\fib(\mathcal G\to i_*\tau_{>0}i^*\mathcal G)$
	and the t-exactness of $-\otimes\QQ_\ell$ for the usual ($p=0$)
	t-structure.
\end{proof}

\section{Universal local acyclicity}\label{sec:ula}
This section is dedicated to a discussion of the notion of universal local
acyclicity we will use to define the singular support of a constructible 
$\ell$-adic sheaf on a smooth variety. Almost all of the ideas in this
section are either implicit or explicit in \cite{HS}, so there is no claim
of originality.
Without further remark, all schemes in this section are assumed to live
over $\Spec\ZZ[\frac1\ell]$.

While the classical definition of local acyclicity is phrased in terms of
Milnor fibers, Lu and Zheng discovered in \cite{LZ_ula} a 
characterization of universal local acyclicity that remains in the world of
varieties. Their characterization is in terms of dualizability in a certain
symmetric monoidal 2-category (bicategory) of cohomological
correspondances. We now recall the definition of that 2-category, or rather
of a slight variation on it with the same dualizable objects that is due to
\cite{HS}.

\subsection{}\label{sec:C_S}
In the following discussion (i.e. until \S\ref{sec:def_local_acyclicity}),
all schemes are moreover assumed qcqs. Fix a base scheme $S$.
Our 2-category $\mathcal C_S:=\mathcal C_{S,\Lambda}$ has objects given by
pairs $(X,\mathcal F)$, where $f:X\to S$ is separated of finite 
presentation and $\mathcal F\in D(X)$.
A 1-morphism $g:(X,\mathcal F)\to(Y,\mathcal G)$ in $\mathcal C_S$ is the
data of some $\mathcal K\in D(X\times_SY)$ together with a map
$\delta:\pi_{Y!}(\pi_X^*\mathcal F\otimes\mathcal K)\to\mathcal G$,
where $\pi_X,\pi_Y$ denote the projections 
$X\times_SY\to X,Y$.
Given 1-morphisms $(\mathcal K,\delta)$ and 
$(\mathcal M,\vep):(X,\mathcal F)\to(Y,\mathcal G)$, a 2-morphism
$(\mathcal K,\delta)\Rightarrow(\mathcal M,\vep)$ is given by a map
$\mathcal K\to\mathcal M$ in $D(X\times_SY)$ making a commutative triangle
\begin{equation*}\begin{tikzcd}[row sep=2]
	\pi_{Y!}(\pi_X^*\mathcal F\otimes\mathcal K)\arrow[dr,"\delta"]\arrow[dd] \\
	&\mathcal G. \\
	\pi_{Y!}(\pi_X^*\mathcal F\otimes\mathcal M)\arrow[ur,"\vep"']
\end{tikzcd}\end{equation*}

Concerning composition in $\mathcal C_S$, suppose given 1-morphisms
$(X,\mathcal F)\xra{\mathcal L}(Y,\mathcal G)\xra{\mathcal M}(Z,\mathcal K)$
in $\mathcal C_S$
($\mathcal L\in D(X\times_SY)$, $\mathcal M\in D(Y\times_SZ)$), which come with maps
$\delta:\pi_{Y!}(\pi_X^*\mathcal F\otimes\mathcal L)\to\mathcal G$
and $\vep:\pi_{Z!}(\pi_Y^*\mathcal G\otimes\mathcal M)\to\mathcal K$.
The composite $(\mathcal M,\vep)\circ(\mathcal L,\delta)$ is given by
$\pi_{XZ!}(\pi_{XY}^*\mathcal L\otimes\pi_{YZ}^*\mathcal M)
\in D(X\times_SZ)$,\footnote{Here, $\pi_{XZ}$ denotes the projection
$X\times_SY\times_SZ\to X\times_SZ$, etc.} together with a map 
$\pi_{Z!}(\pi_X^*\mathcal F\otimes\pi_{XZ!}(\pi_{XY}^*\mathcal L\otimes\pi_{YZ}^*\mathcal M)
\to\mathcal K$ obtained using proper base change and the projection 
formula as follows.\footnote{In light of the existence of integral models,
the validity of these formulas in this context, as well as that of
the K\"unneth formula used below, follows directly from the corresponding 
facts in \'etale cohomology as the relevant functors on integral sheaves
commute with rationalization and reduction modulo $\ell$.
See \cite[Lemma 6.7.10 \& Lemma 6.7.14]{BS}}
(In the following, the projections $\pi$ denote projections from 
$X\times_SY$, $X\times_SY\times_SZ$ and the like, and the meaning of the
notation changes from line to line.)
\begin{align*}
	&\pi_{Z!}(\pi_X^*\mathcal F\otimes\pi_{XZ!}(\pi_{XY}^*\mathcal L\otimes\pi_{YZ}^*\mathcal M)&(\pi_Z:X\times_SZ\to Z) \\
	=&\ \pi_{Z!}(\pi_X^*\mathcal F\otimes\pi_{XY}^*\mathcal L\otimes\pi_{YZ}^*\mathcal M)
	&(\pi_Z:X\times_SY\times_SZ\to Z) \\
	=&\ \pi_{Z!}(\pi_{YZ!}(\pi_X^*\mathcal F\otimes\pi_{XY}^*\mathcal L)\otimes\mathcal M)
	&(\pi_Z:Y\times_SZ\to Z) \\
	=&\ \pi_{Z!}(\pi_Y^*(\pi'_{Y!}(\pi_X^*\mathcal F\otimes\mathcal L))\otimes\mathcal M)
	&(\pi_Y:Y\times_SZ\to Y,\pi_Y':X\times_SY\to Y) \\
	\xra\delta&\ \pi_{Z!}(\pi_Y^*\mathcal G\otimes\mathcal M)
	\xra\vep\mathcal K.
\end{align*}

The symmetric monoidal structure on $\mathcal C_S$ works as follows: on
objects we have 
$(X,\mathcal F)\boxtimes(Y,\mathcal G)
:=(X\times_SY,\mathcal F\boxtimes\mathcal G:=\pi_X^*\mathcal F\otimes\pi_Y^*\mathcal G)$.
On 1-morphisms, suppose given 1-morphisms $(X,\mathcal F)\to(Y,\mathcal G)$ 
and $(X',\mathcal F')\to(Y',\mathcal G')$ represented by objects
$\mathcal K\in D(X\times_SY)$ and $\mathcal K'\in D(X'\times_SY')$ and 
arrows $\pi_{Y!}(\pi_X^*\mathcal F\otimes\mathcal K)\to\mathcal G$ and
$\pi_{Y'!}(\pi_{X'}^*\mathcal F'\otimes\mathcal K')\to\mathcal G'$. Their
tensor product is the 1-morphism 
$(X\times_SX',\mathcal F\boxtimes\mathcal F')\to(Y\times_SY',\mathcal G\boxtimes\mathcal G')$
given by
$\mathcal K\boxtimes\mathcal K'\in D(X\times_SY\times_SX'\times_SY')$ 
together with the morphism
$\pi_{Y!}(\pi_X^*(\mathcal F\boxtimes\mathcal F')\otimes(\mathcal K\boxtimes\mathcal K'))=
\pi_{Y!}((\pi_X'^*\mathcal F\otimes\mathcal K)\boxtimes(\pi_{X'}'^*\mathcal F'\otimes\mathcal K'))
=\pi_{Y!}'(\pi_X'^*\mathcal F\otimes\mathcal K)\boxtimes\pi_{Y'!}'(\pi_{X'}'^*\mathcal F'\otimes\mathcal K')
\to\mathcal G\boxtimes\mathcal G'$,
where here $\pi_X:X\times_SX'\times_SY\times_SY'\to X\times_SX'$,
$\pi_X':X\times_SY\to X$ (likewise for $\pi_Y$), and the second isomorphism 
is given by K\"unneth.

On 2-morphisms, given 1-morphisms $f_1,f_2:(X,\mathcal F)\to(Y,\mathcal G)$
represented by $\mathcal K_1,\mathcal K_2\in D(X\times_SY)$ and 1-morphisms
$g_1,g_2:(X',\mathcal F')\to(Y',\mathcal G')$ 
represented by $\mathcal K_1',\mathcal K_2'\in D(X'\times_SY')$, and given
2-morphisms $\alpha:f_1\Rightarrow f_2$ represented by some arrow 
$\mathcal K_1\to\mathcal K_2$ in $D(X\times_SY)$ and $\beta:g_1\Rightarrow g_2$ given by $\mathcal K_1'\to\mathcal K_2'$,
$\alpha\boxtimes\beta:f_1\boxtimes g_1\Rightarrow f_2\boxtimes g_2$ is
given by
$\mathcal K_1\boxtimes\mathcal K_1'\to\mathcal K_2\boxtimes\mathcal K_2'$, 
which satisfies the required compatibility in light of the K\"unneth
isomorphism above.

The monoidal unit $1_{\mathcal C_S}$ is given by $(S,\Lambda)$.

Given a morphism of schemes $f:S'\to S$, base change gives a symmetric
monoidal functor $f^*:\mathcal C_S\to\mathcal C_{S'}$ which on objects
sends $(X,\mathcal F)$ to $(X\times_SS',f^*\mathcal F)$. That this
determines a symmetric monoidal functor relies on proper base change.

If $\Lambda=\mathcal O_E$ is integral, reduction modulo $\ell$ and
inverting $\ell$ determine symmetric monoidal functors 
$\mathcal C_{S,\mathcal O_E}\to\mathcal C_{S,\mathcal O_E/\ell}$ and
$\mathcal C_{S,\mathcal O_E}\to\mathcal C_{S,E}$, respectively
(we will only consider the former functor when $E/\QQ_\ell$ is finite).
On objects these functors send $(X,\mathcal F)$ to $(X,\mathcal F/\ell)$
and $(X,\mathcal F[\ell^{-1}]=\mathcal F\otimes_{\mathcal O_E}E)$, 
respectively. That these determine symmetric monoidal functors rests on
the commutation of pullback and direct image with compact support with
reduction modulo $\ell$ and rationalization; for rationalization this
follows from the fact that direct image between qcqs schemes commutes with
filtered colimits in $D^{\geq 0}(?_\proet,\Lambda)$.

\subsection{}\label{sec:duals_in_2cats}
An object $V$ of a symmetric monoidal 2-category (bicategory)
$(\mathcal C,\otimes,1_\mathcal C)$ is said to be \emph{dualizable} if
there exists an object $V^\vee$ of $\mathcal C$, called the \emph{dual} of
$V$, and 1-morphisms $\ev:V^\vee\otimes V\to1_{\mathcal C}$,
$\coev:1_{\mathcal C}\to V\otimes V^\vee$ so that the composites
\begin{equation*}
	V\xrightarrow{\coev\otimes\id} V\otimes V^\vee\otimes V\xra{\id\otimes\ev} V
	\quad\text{and}\quad
	V^\vee\xra{\id\otimes\coev} V^\vee\otimes V\otimes V^\vee\xra{\ev\otimes\id}V^\vee
\end{equation*}
are isomorphic to the respective identities.
It's clear from the definition that if $V$ is dualizable, then $V^\vee$ is
too and $V^{\vee\vee}=V$. Likewise, if $V$ and $W$ are dualizable, then
$V\otimes W$ is dualizable with dual $V^\vee\otimes W^\vee$. If
$F:(\mathcal C,\otimes,1_\mathcal C)\to(\mathcal D,\otimes,1_{\mathcal D})$
is a symmetric monoidal functor of symmetric monoidal 2-categories, then
$F$ preserves dualizable objects and duals.

If $V$ is dualizable, then the morphisms $\ev$ and $\coev$ exhibit
$-\otimes V^\vee$ as right (and left) adjoint to $-\otimes V$.
Thus, for every object $W$ of $\mathcal C$, the internal Hom
$\sHom(V,W)$ exists and is equivalent to $W\otimes V^\vee$.
Conversely, dualizability can be characterized in terms of certain internal
mapping objects.
\begin{lemma-cite*}[{\cite[Lemma 1.4]{LZ_ula}}]
	An object $V$ of $\mathcal C$ is dualizable if and only if the internal Hom objects
	$\sHom(V,1_{\mathcal C})$ and $\sHom(V,V)$ exist and the morphism
	$V\otimes\sHom(V,1_\mathcal C)\to\sHom(V,V)$ adjoint to the map
	$V\otimes\sHom(V,1_\mathcal C)\otimes V\xra{\id\otimes\ev} V$ is a
	split epimorphism, where
	$\ev:\sHom(V,1_{\mathcal C})\otimes V\to1_{\mathcal C}$
	denotes the counit of adjunction.
\end{lemma-cite*}

\subsection{} Fix a base scheme $S$.
Given a dualizable object $(X,\mathcal F)$ of $\mathcal C_S$, the dual of
$(X,\mathcal F)$ has a concrete description in terms of the relative
Verdier dual. Let 
\begin{equation*}
	\mathbf D_{X/S}(\mathcal F):=
	\sHom_{\mathcal D(X_\proet,\Lambda)}(\mathcal F,f^!\Lambda)
	\in\mathcal D(X_\proet,\Lambda).
\end{equation*}
Here, if $\Lambda=\mathcal O_E$ with $E/\QQ_\ell$ a finite extension,
$f^!\Lambda:=\lim_nf^!(\mathcal O_E/\ell^n)$ as an object of
$\mathcal D(X_\proet,\Lambda)$. Here, $\mathcal O_E/\ell^n$ is an \'etale 
sheaf, so the functor $f^!$ is defined, but $f^!(\mathcal O_E/\ell^n)$ 
needn't be constructible.

If $\Lambda=E$ with $E/\QQ_\ell$ finite,
$f^!\Lambda:=(\lim_nf^!(\mathcal O_E/\ell^n))\otimes_{\ZZ_\ell}\QQ_\ell
=(\lim_nf^!(\mathcal O_E/\ell^n))\otimes_{\mathcal O_E}E
=\colim((\lim_nf^!(\mathcal O_E/\ell^n))\xra\ell(\lim_nf^!(\mathcal O_E/\ell^n))\xra\ell\ldots)=:\colim_\ell\lim_n(f^!(\mathcal O_E/\ell^n))$,
where these limits and colimits are taken in
$\mathcal D(X_\proet,\mathcal O_E)$.

If $\mathcal F\in\mathcal D(X,\Lambda)$ with $\Lambda=\mathcal O_E$ and
$E/\QQ_\ell$ infinite algebraic, we may find a 
$\mathcal F'\in\mathcal D(X,\mathcal O_L)$, $L/\QQ_\ell$ finite, so that 
$\mathcal F=\mathcal F'\otimes_{\mathcal O_L}\mathcal O_E$, and then
$f^!\Lambda:=(f^!\mathcal O_L)\otimes_{\mathcal O_L}\mathcal O_E$, so that
$\mathbf D_{X/S}(\mathcal F)=\mathbf D_{X/S}(\mathcal F')\otimes_{\mathcal O_L}\mathcal O_E$
\cite[Lemma 5.3]{HRS}.

If $\Lambda=E$ with $E/\QQ_\ell$ infinite algebraic, we may again 
approximate $\mathcal F$ by $\mathcal F'\in\mathcal D(X,L)$ and set
$f^!\Lambda:=(f^!\mathcal O_E)\otimes_{\mathcal O_E}E
=(f^!\mathcal O_L)\otimes_{\mathcal O_L}E
=(f^!L)\otimes_LE$,
so that
$\mathbf D_{X/S}(\mathcal F)=\mathbf D_{X/S}(\mathcal F')\otimes_LE$.

\begin{proposition-cite*}[{\cite[Proposition 3.4]{HS}}]\label{prop:duals_in_C_S}
	Let $f:X\to S$ be a separated map of finite presentation and 
	$(X,\mathcal F)$ be dualizable in $\mathcal C_S$. Then the relative
	Verdier dual $\mathbf D_{X/S}(\mathcal F)$ is constructible and
	$(X,\mathbf D_{X/S}(\mathcal F))$ is the dual of $(X,\mathcal F)$ in 
	$\mathcal C_S$. In particular, the biduality map 
	$\mathcal F\to\mathbf D_{X/S}(\mathbf D_{X/S}(\mathcal F))$
	is an isomorphism, and the formation of $\mathbf D_{X/S}(\mathcal F)$
	commutes with any base change in $S$, as well as with reduction modulo
	$\ell$ if $\Lambda=\mathcal O_E$ with $E/\QQ_\ell$ finite.
\end{proposition-cite*}

\subsection{}\label{sec:C_S_internal_homs}
In view of Lemma~\ref{sec:duals_in_2cats}, it's useful to understand the
internal mapping objects in $\mathcal C_S$.
\begin{proposition*}\label{HS_3.3_internal_hom}
Fix a finite extension $L$ of $\QQ_\ell$.
Suppose $(X,\mathcal F)$ and $(Y,\mathcal G)$ are in $\mathcal C_S$ with 
$\mathcal O_L$-coefficients. Then the internal Hom
$\sHom_{\mathcal C_S}((X,\mathcal F),(Y,\mathcal G))$ exists and is given 
by
\begin{equation*}(X\times_SY,\sHom_{\proet}(\pi_1^*\mathcal F,\lim_n\pi_2^!(\mathcal G/\ell^n))\end{equation*}
if this sheaf is constructible.

Similarly, if $(X,\mathcal F)$ and $(Y,\mathcal G)$ are in $\mathcal C_S$
with $L$-coefficients, then the internal Hom 
$\sHom_{\mathcal C_S}((X,\mathcal F),(Y,\mathcal G))$ exists and is given 
by
\begin{equation*}(X\times_SY,\sHom_{\proet}(\pi_1^*\mathcal F,(\lim_n\pi_2^!(\mathcal G_0/\ell^n))[\ell^{-1}])\end{equation*}
if this sheaf is constructible, where $\mathcal G_0$ is any integral model
for $\mathcal G$.
\end{proposition*}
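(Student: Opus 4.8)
The plan is to verify the universal property of the internal Hom directly, by an adjunction chase, so that the content reduces to the $\ell$-adic six-functor formalism on the pro-\'etale site supplied by \cite{BS} and \cite{HS}. Recall that $\sHom_{\mathcal C_S}((X,\mathcal F),(Y,\mathcal G))$, if it exists, is the object of $\mathcal C_S$ corepresenting the $2$-functor $(W,\mathcal E)\mapsto\Hom_{\mathcal C_S}\big((W,\mathcal E)\boxtimes(X,\mathcal F),(Y,\mathcal G)\big)$. So I would fix $(W,\mathcal E)$ and exhibit an equivalence, natural in $(W,\mathcal E)$, between this mapping category and $\Hom_{\mathcal C_S}\big((W,\mathcal E),(X\times_SY,\mathcal H)\big)$ for $\mathcal H:=\sHom_\proet(\pi_1^*\mathcal F,\pi_2^!\mathcal G)$, where $\pi_2^!$ denotes the $\ell$-adic upper shriek: by the conventions of \S\ref{sec:C_S_internal_homs} this is $\lim_n\pi_2^!(\mathcal G/\ell^n)$ for $\mathcal O_L$-coefficients and $(\lim_n\pi_2^!(\mathcal G_0/\ell^n))[\ell^{-1}]$ for $L$-coefficients, so the formula in the statement is just this with the definition spelled out. (Independence of the second formula from the integral model $\mathcal G_0$ is automatic: it merely names the value of the $\ell$-adic functor $\pi_2^!$ on $\mathcal G$.)

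The equivalence keeps the underlying kernel fixed. Identify $D\big((W\times_SX)\times_SY\big)=D\big(W\times_S(X\times_SY)\big)$ and write $\pi_W,\pi_X,\pi_Y$ for the projections of $W\times_SX\times_SY$ onto its factors and $\pi_{XY}$ for its projection onto $X\times_SY$, so that $\pi_X=\pi_1\circ\pi_{XY}$ and $\pi_Y=\pi_2\circ\pi_{XY}$. A $1$-morphism $(W,\mathcal E)\boxtimes(X,\mathcal F)\to(Y,\mathcal G)$ is a kernel $\mathcal K$ together with a map $\delta\colon\pi_{Y!}(\pi_W^*\mathcal E\otimes\pi_X^*\mathcal F\otimes\mathcal K)\to\mathcal G$, a $1$-morphism $(W,\mathcal E)\to(X\times_SY,\mathcal H)$ is the same $\mathcal K$ together with a map $\pi_{XY!}(\pi_W^*\mathcal E\otimes\mathcal K)\to\mathcal H$, and
\begin{align*}
\Hom\big(\pi_{Y!}(\pi_W^*\mathcal E\otimes\pi_X^*\mathcal F\otimes\mathcal K),\mathcal G\big)
&\simeq\Hom\big(\pi_{2!}\big(\pi_1^*\mathcal F\otimes\pi_{XY!}(\pi_W^*\mathcal E\otimes\mathcal K)\big),\mathcal G\big)\\
&\simeq\Hom\big(\pi_1^*\mathcal F\otimes\pi_{XY!}(\pi_W^*\mathcal E\otimes\mathcal K),\pi_2^!\mathcal G\big)\\
&\simeq\Hom\big(\pi_{XY!}(\pi_W^*\mathcal E\otimes\mathcal K),\sHom_\proet(\pi_1^*\mathcal F,\pi_2^!\mathcal G)\big).
\end{align*}
Here the first step is the projection formula along $\pi_{XY}$ (pulling $\pi_X^*\mathcal F=\pi_{XY}^*\pi_1^*\mathcal F$ out of $\pi_{XY!}$, together with $\pi_{Y!}=\pi_{2!}\pi_{XY!}$), the second is the $(\pi_{2!},\pi_2^!)$-adjunction, and the third is tensor--Hom adjunction on $(X\times_SY)_\proet$. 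This matches $\delta$'s with structure maps on the other side; since $\pi_{XY}$ is separated of finite presentation, $\pi_{XY!}$ of a constructible sheaf is constructible, so both sides really are $\Hom$-sets in $\mathcal C_S$. Re-running the same chain on $2$-morphisms --- maps of kernels making the structure triangles commute --- upgrades this to an equivalence of mapping categories, and its compatibility with precomposition by $1$-morphisms of $\mathcal C_S$ follows because every isomorphism used is natural and agrees with the base-change and projection-formula identities that define composition in $\mathcal C_S$. This is the argument of \cite[Proposition 3.3]{HS}, which I would transcribe in the $\ell$-adic setting.

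The one point requiring care is that these formal inputs hold on the pro-\'etale site with $\mathcal O_L$- and $L$-coefficients, not merely modulo $\ell^n$. Modulo $\ell^n$ they are classical; the $\ell$-adic versions follow from \cite[Corollary 2.4]{HS}, \cite[Lemma 6.7.10 \& Lemma 6.7.14]{BS} and the discussion of \S\ref{sec:cons}, by which pullback, $f_!$, $\otimes$, $\sHom$ and $f^!$ are compatible with $\lim_n$ on derived $\ell$-complete objects and with inverting $\ell$. The only genuinely nontrivial ingredient among these is that the $\ell$-adic $\pi_2^!$ is right adjoint to $\pi_{2!}$ on constructible sheaves, and this is supplied by \cite{HS}; everything else is bookkeeping. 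The only genuine hypothesis of the proposition is the constructibility of $\mathcal H$: the chase above always produces $\mathcal H\in D((X\times_SY)_\proet,\Lambda)$, but $(X\times_SY,\mathcal H)$ is an object of $\mathcal C_S$ --- and therefore the internal Hom there --- exactly when $\mathcal H$ is constructible, which it need not be since already $\pi_2^!\Lambda$ need not be.
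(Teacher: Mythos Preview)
Your argument is correct and is essentially the paper's own: both verify the universal property of the internal Hom by the same adjunction chase (projection formula, then $(\pi_{2!},\pi_2^!)$, then tensor--$\sHom$), matching objects and 2-morphisms of the two mapping categories. The only point where the paper is more explicit than you is the justification of the $(\pi_{2!},\pi_2^!)$-adjunction itself: rather than citing \cite{HS} for an $\ell$-adic adjunction, the paper writes it out by hand as $\Hom(\mathcal A,\lim_n h^!(\mathcal B/\ell^n))=\lim_n\Hom(h_!(\mathcal A/\ell^n),\mathcal B/\ell^n)=\Hom(h_!\mathcal A,\mathcal B)$ for constructible $\mathcal A,\mathcal B$ (and in the rational case the analogous $\colim_\ell\lim_n$ manipulation using \cite[Lemma 5.3]{HRS}), since $\pi_2^!\mathcal G$ need not be constructible and one must be a little careful about where the adjunction lives.
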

\begin{proof*}
Let's first consider integral coefficients. To check the claim, we have to
check that adjunction
\begin{multline}\label{eq:hom_categories}\tag{$\ast$}
\Hom_{\mathcal C_S}((X,\mathcal F)\otimes(Y,\mathcal G),(Z,\mathcal M)) \\\cong
\Hom_{\mathcal C_S}((X,\mathcal F),(Y\times_SZ,\sHom(\pi_Y^*\mathcal G,\lim_n\pi_Z^!(\mathcal M/\ell^n))))\end{multline}
determines an isomorphism of categories.
Objects of the first category are objects 
$\mathcal K\in D(X\times_SY\times_SZ)$
equipped with an arrow
$\pi_{Z!}(\pi_{XY}^*(\mathcal F\boxtimes \mathcal G)\otimes\mathcal K)\to\mathcal M$
in $D(Z)$. Objects of the latter category are objects
$\mathcal K\in D(X\times_SY\times_SZ)$ equipped with an arrow
$\pi_{YZ!}(\pi_X^*\mathcal F\otimes\mathcal K)\to\sHom(\pi_Y^*\mathcal G,\lim_n\pi_Z^!(\mathcal M/\ell^n))$
(where $\pi_{YZ}:X\times_SY\times_SZ\to Y\times_SZ$ is the projection),
i.e. with an arrow $\pi_{YZ!}(\pi_X^*\mathcal F\otimes\mathcal K)\otimes\pi_Y^*\mathcal G
=\pi_{YZ!}(\pi_{XY}^*(\mathcal F\boxtimes\mathcal G)\otimes\mathcal K)\to\lim_n\pi_Z^!(\mathcal M/\ell^n)$.
Given a separated finitely-presented map $h:Y\to X$ and constructible
sheaves $\mathcal A$ and $\mathcal B$ on $Y$ and $X$, respectively, there
is an isomorphism of Hom complexes
$\Hom(\mathcal A,\lim_n h^!(\mathcal B/\ell^n))
=\Hom(h_!\mathcal A,\mathcal B)$
in $\mathcal D(\Lambda)$ obtained by passing the limit outside and using
the adjunction on the level of \'etale sheaves.
Therefore the objects in the categories~\eqref{eq:hom_categories} are the
same.

Arrows of the first category are maps $\mathcal K\to\mathcal H$ in
$D(X\times_SY\times_SZ)$ making the triangle below commute.
\begin{equation*}\begin{tikzcd}
	\pi_{Z!}(\pi_{XY}^*(\mathcal F\boxtimes\mathcal G)\otimes\mathcal K)\arrow[d]\arrow[r]&\mathcal M \\
	\pi_{Z!}(\pi_{XY}^*(\mathcal F\boxtimes\mathcal G)\otimes\mathcal H)\arrow[ur]
\end{tikzcd}
\end{equation*}
Arrows of the second category are maps $\mathcal K\to\mathcal H$ making the
triangle below commute.
\begin{equation*}\begin{tikzcd}
	\pi_{YZ!}(\pi_X^*\mathcal F\otimes\mathcal K)\arrow[r]\arrow[d]&
	\sHom(\pi_Y^*\mathcal G,\lim_n\pi_Z^!(\mathcal M/\ell^n)) \\
	\pi_{YZ!}(\pi_X^*\mathcal F\otimes\mathcal H)\arrow[ur]
\end{tikzcd}\end{equation*}
These are seen to be the same under adjunction;
therefore the two $\Hom$ categories are indeed isomorphic.

With $L$-coefficients the argument is the same using also
\cite[Lemma 5.3]{HRS}. With $\mathcal A_0$ and $\mathcal B_0$ in
$\mathcal D(Y,\mathcal O_L)$ and $\mathcal D(X,\mathcal O_L)$,
respectively, 
$\mathcal A=\mathcal A_0[\ell^{-1}]$ etc. and $h$ as above, we have 
isomorphisms in $\mathcal D(L)$ of the type
\begin{align*}
	\Hom(\mathcal A,(\lim_n h^!(\mathcal B_0/\ell^n))[\ell^{-1}])
	&=\colim_\ell\lim_n\Hom(\mathcal A_0/\ell^n,h^!(\mathcal B_0/\ell^n)) \\
	&=\colim_\ell\lim_n\Hom(h_!(\mathcal A_0/\ell^n),\mathcal B_0/\ell^n) \\
	&=\Hom(h_!\mathcal A_0,\mathcal B)=\Hom(h_!\mathcal A,\mathcal B).
	&\pushright\qed
\end{align*}
\end{proof*}
The following corollary is the $\ell$-adic analogue of
\cite[Proposition 3.3]{HS} and follows from Lemma~\ref{sec:duals_in_2cats}
in light of the previous result.
\begin{corollary*}\label{HS:3.3_corollary}
	Suppose $f:X\to S$ is a separated finitely-presented morphism of schemes
	and $(X,\mathcal F)$ is in $\mathcal C_S$ with $\Lambda=L$ or
	$\mathcal O_L$, $L$ a finite extension of $\QQ_\ell$.
	Then $(X,\mathcal F)$ is dualizable if
	$\mathbf D_{X/S}(\mathcal F)=\sHom(\mathcal F,f^!\Lambda)$
	is constructible and the natural map
	\begin{equation*}
		\mathbf D_{X/S}(\mathcal F)\boxtimes_S\mathcal F
		\lra\sHom(\pi_1^*\mathcal F,\pi_2^!\mathcal F)
	\end{equation*}
	is an isomorphism in $D(X\times_SX)$, where as usual $\pi_2^!\mathcal F$
	denotes the pro-\'etale sheaf 
	$\lim_n\pi_2^!(\mathcal F/\ell^n)$
	with integral coefficients or 
	$(\lim_n\pi_2^!(\mathcal F_0/\ell^n))[\ell^{-1}]$
	with $L$-coefficients, where $\mathcal F_0$ is any integral model for
	$\mathcal F$, and likewise for $f^!\Lambda$.
\end{corollary*}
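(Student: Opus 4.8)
The plan is to verify the dualizability criterion of Lemma~\ref{sec:duals_in_2cats} for the object $V := (X,\mathcal F)$ of $\mathcal C_S$, using Proposition~\ref{HS_3.3_internal_hom} to make the two internal mapping objects explicit. Since the monoidal unit is $1_{\mathcal C_S} = (S,\Lambda)$ and $X\times_S S = X$ with projections $\id_X$ and $f$, Proposition~\ref{HS_3.3_internal_hom} gives $\sHom_{\mathcal C_S}(V,1_{\mathcal C_S}) = (X,\sHom(\mathcal F,f^!\Lambda)) = (X,\mathbf D_{X/S}(\mathcal F))$ --- in the case $\Lambda = L$ the $[\ell^{-1}]$-twist appearing in Proposition~\ref{HS_3.3_internal_hom} is precisely the one built into the definition of $f^!\Lambda$ in \S\ref{sec:duals_in_2cats} --- and this object exists because $\mathbf D_{X/S}(\mathcal F)$ is constructible by hypothesis. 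Likewise $\sHom_{\mathcal C_S}(V,V) = (X\times_S X,\sHom(\pi_1^*\mathcal F,\pi_2^!\mathcal F))$, with $\pi_2^!\mathcal F$ read as in the statement; this object exists because $\sHom(\pi_1^*\mathcal F,\pi_2^!\mathcal F)$ is the target of the isomorphism assumed in the hypothesis, whose source $\mathbf D_{X/S}(\mathcal F)\boxtimes_S\mathcal F$ --- the external tensor product over the qcqs base $S$ of the constructible sheaves $\mathbf D_{X/S}(\mathcal F)$ and $\mathcal F$ --- is constructible. So both internal Homs exist.

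The next step is to recognize the natural map $\mu\colon V\otimes\sHom_{\mathcal C_S}(V,1_{\mathcal C_S})\to\sHom_{\mathcal C_S}(V,V)$ of Lemma~\ref{sec:duals_in_2cats}. By the definition of $\otimes$ in $\mathcal C_S$ its source is $(X\times_S X,\mathcal F\boxtimes\mathbf D_{X/S}(\mathcal F))$, which, after the symmetry of $\mathcal C_S$ transposing the two $X$-factors, is $(X\times_S X,\mathbf D_{X/S}(\mathcal F)\boxtimes_S\mathcal F)$; thus source and target of $\mu$ both live over $X\times_S X$. Now $\mu$ is the map adjoint to $\id_V\otimes\ev$, where $\ev\colon\sHom_{\mathcal C_S}(V,1_{\mathcal C_S})\otimes V\to 1_{\mathcal C_S}$ is the counit supplied by the adjunction of Proposition~\ref{HS_3.3_internal_hom}; concretely (as one sees by tracing the proof of Proposition~\ref{HS_3.3_internal_hom}) $\ev$ is the $1$-morphism given by the diagonal sheaf $\Delta_!\Lambda_X$ on $X\times_S X$ together with the composite $f_!(\mathbf D_{X/S}(\mathcal F)\otimes\mathcal F)\to f_!f^!\Lambda\to\Lambda$ of the internal-Hom evaluation and the counit $f_!f^!\to\id$. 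Unwinding the identifications of Proposition~\ref{HS_3.3_internal_hom} --- in essence the adjunction $h_!\dashv\lim_n h^!(-/\ell^n)$ with the limit, and when $\Lambda = L$ also the $\colim_\ell$, pulled through --- one finds that $\mu$ is the $1$-morphism of $\mathcal C_S$ induced, via the diagonal $\mathcal K = \Delta_!\Lambda_{X\times_S X}$, by the canonical comparison map $\mathbf D_{X/S}(\mathcal F)\boxtimes_S\mathcal F\to\sHom(\pi_1^*\mathcal F,\pi_2^!\mathcal F)$ of pro-\'etale sheaves on $X\times_S X$ --- precisely the map in the statement. This is the same bookkeeping as in \cite[Proposition 3.3]{HS}, and it carries over because every functor involved commutes with reduction modulo $\ell$, and, when $\Lambda = L$, with rationalization (cf.\ the footnote in \S\ref{sec:C_S}). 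I expect this identification --- matching the abstract $2$-categorical natural transformation with the concrete sheaf map --- to be the only point requiring real care.

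Finally, a map of sheaves on a scheme $Z$ which is an isomorphism induces, via the diagonal construction $\mathcal K = \Delta_!\Lambda_Z$, an invertible $1$-morphism in $\mathcal C_S$: its inverse is the $1$-morphism induced by the inverse sheaf map, because the composition recipe of \S\ref{sec:C_S} applied to two diagonal-sheaf $1$-morphisms returns again a diagonal-sheaf $1$-morphism, reading off the composite of the underlying maps of sheaves, and the diagonal $1$-morphism of $\id$ is the identity. By hypothesis the comparison map $\mathbf D_{X/S}(\mathcal F)\boxtimes_S\mathcal F\to\sHom(\pi_1^*\mathcal F,\pi_2^!\mathcal F)$ is an isomorphism in $D(X\times_S X)$, so $\mu$ is an invertible $1$-morphism, in particular a split epimorphism, and Lemma~\ref{sec:duals_in_2cats} shows that $(X,\mathcal F)$ is dualizable. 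Both cases $\Lambda = \mathcal O_L$ and $\Lambda = L$ are treated uniformly, each being covered by the single statement of Proposition~\ref{HS_3.3_internal_hom}.
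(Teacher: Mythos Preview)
Your proposal is correct and takes essentially the same approach as the paper, which dispatches the corollary in a single sentence: it ``follows from Lemma~\ref{sec:duals_in_2cats} in light of the previous result.'' You have simply filled in the details the paper omits---identifying the two internal Homs via Proposition~\ref{HS_3.3_internal_hom}, matching the abstract map $\mu$ with the sheaf-level comparison map, and observing that an isomorphism of sheaves yields an invertible (hence split epi) $1$-morphism via the diagonal kernel.
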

\subsection{}\label{sec:converse_internal_hom}
The following in particular provides a converse to the previous result.
Together, these results appear in the second paragraph following
\cite[Proposition 3.3]{HS}, and the proof below implements the approach
sketched there.
\begin{proposition*}\label{prop:HS_3.3_dualizable_internal_hom}
	Suppose $L/\QQ_\ell$ is finite and $(X,\mathcal F)$ is dualizable in
	$\mathcal C_S$ with $\mathcal O_L$-coefficients.
	Then for every $(Y,\mathcal G)$ in $\mathcal C_S$,
	$\sHom(\pi_1^*\mathcal F,\lim_n\pi_2^!(\mathcal G/\ell^n))$
	is constructible on $X\times_SY$ and we have via the canonical map
	an isomorphism
	\begin{equation*}
		\mathbf D_{X/S}(\mathcal F)\boxtimes_S\mathcal G\xlongrightarrow\sim
		\sHom(\pi_1^*\mathcal F,\lim_n\pi_2^!(\mathcal G/\ell^n)).
	\end{equation*}
	
	Likewise, if $(X,\mathcal F)$ is dualizable in $\mathcal C_S$ with
	$L$-coefficients, then for every $(Y,\mathcal G)$ in $\mathcal C_S$,
	$\sHom(\pi_1^*\mathcal F,(\lim_n\pi_2^!\mathcal G_n)[\ell^{-1}])$
	is constructible on $X\times_SY$ and isomorphic to
	$\mathbf D_{X/S}(\mathcal F)\boxtimes_S\mathcal G$ via the canonical 
	map, where $\mathcal G_n:=\mathcal G_0/\ell^n$ for
	$\mathcal G_0$ any integral model for $\mathcal G$.
\end{proposition*}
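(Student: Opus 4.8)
The plan is to deduce everything from the characterization of dualizability via internal Hom objects (Lemma~\ref{sec:duals_in_2cats}), together with the explicit description of those internal Homs supplied by Proposition~\ref{HS_3.3_internal_hom}. Since $(X,\mathcal F)$ is dualizable, so is its dual $(X,\mathbf D_{X/S}(\mathcal F))$ by the generalities on dualizable objects in a symmetric monoidal $2$-category; hence, for \emph{any} object $(Y,\mathcal G)$ of $\mathcal C_S$, the internal mapping object $\sHom_{\mathcal C_S}((X,\mathcal F),(Y,\mathcal G))$ exists and is computed as the tensor with the dual, namely $(X\times_S Y,\ \mathbf D_{X/S}(\mathcal F)\boxtimes_S\mathcal G)$ — this is the formal fact that when $V$ is dualizable one has $\sHom(V,W)\simeq W\otimes V^\vee$ for every $W$. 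On the other hand, Proposition~\ref{HS_3.3_internal_hom} tells us that \emph{provided the sheaf $\sHom_\proet(\pi_1^*\mathcal F,\lim_n\pi_2^!(\mathcal G/\ell^n))$ is constructible}, the same internal Hom is computed by the pair $(X\times_S Y,\ \sHom_\proet(\pi_1^*\mathcal F,\lim_n\pi_2^!(\mathcal G/\ell^n)))$. So the strategy has two movements: first establish the constructibility, so that Proposition~\ref{HS_3.3_internal_hom} applies; then compare the two formulas for the internal Hom and read off the asserted isomorphism, checking it is the canonical map.

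For the constructibility I would argue as follows. The object $\mathbf D_{X/S}(\mathcal F)$ is constructible by Proposition~\ref{prop:duals_in_C_S} (applied to the dualizable $(X,\mathcal F)$), so $\mathbf D_{X/S}(\mathcal F)\boxtimes_S\mathcal G$ is visibly constructible on $X\times_S Y$. It therefore suffices to produce the canonical comparison map $\mathbf D_{X/S}(\mathcal F)\boxtimes_S\mathcal G\to\sHom_\proet(\pi_1^*\mathcal F,\lim_n\pi_2^!(\mathcal G/\ell^n))$ and show it is an isomorphism; constructibility of the target then follows for free. I would build this map exactly as in the corollary (Corollary~\ref{HS:3.3_corollary}): there is a natural evaluation $\pi_1^*\mathbf D_{X/S}(\mathcal F)\otimes\pi_1^*\mathcal F\to\pi_1^!\Lambda$ along the first projection, which after the projection formula and the base-change compatibilities of $(-)^!$ feeds into a map $\mathbf D_{X/S}(\mathcal F)\boxtimes_S\mathcal G\to\sHom(\pi_1^*\mathcal F,\pi_2^!\mathcal G)$. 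To prove this is an isomorphism I would reduce to the torsion case: by Proposition~\ref{prop:duals_in_C_S} the formation of $\mathbf D_{X/S}(\mathcal F)$ commutes with reduction mod~$\ell$ (in the $\mathcal O_L$-case), $\pi_2^!$ commutes with $\lim_n(-/\ell^n)$ by construction, and $\sHom$ commutes with $(-)/\ell$ on derived $\ell$-complete objects up to a bounded correction controlled by $\Tor$; so by Lemma~\ref{lem:reduction_conservative} it is enough to check the statement after reduction modulo $\ell$, i.e. for perfect-constructible \'etale sheaves, where it is precisely \cite[Proposition~3.3]{HS}. In the $L$-case one further invokes the integral model $\mathcal G_0$ and \cite[Lemma 5.3]{HRS}, together with the exactness of rationalization, to descend the same isomorphism from $\mathcal O_L$-coefficients; here the term $(\lim_n\pi_2^!\mathcal G_n)[\ell^{-1}]$ is exactly $\pi_2^!\mathcal G$ with the $L$-coefficient convention for $(-)^!$ recalled in \S\ref{sec:def_local_acyclicity}'s preamble, so no new content is needed beyond bookkeeping.

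With the isomorphism $\mathbf D_{X/S}(\mathcal F)\boxtimes_S\mathcal G\xrightarrow{\ \sim\ }\sHom(\pi_1^*\mathcal F,\pi_2^!\mathcal G)$ in hand, the target is constructible, so Proposition~\ref{HS_3.3_internal_hom} legitimately identifies $(X\times_S Y,\sHom(\pi_1^*\mathcal F,\pi_2^!\mathcal G))$ with $\sHom_{\mathcal C_S}((X,\mathcal F),(Y,\mathcal G))$; combined with the formal identification of that same internal Hom with $(X\times_S Y,\mathbf D_{X/S}(\mathcal F)\boxtimes_S\mathcal G)$ coming from dualizability, the two agree, and tracing through the adjunctions shows the identification is realized by the canonical map, which is what the statement asserts. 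The main obstacle, I expect, is not the formal $2$-categorical shuffling but the compatibility of all the $!$-pullbacks and $\sHom$'s with reduction modulo $\ell$ and with the inverse limits defining $\pi_2^!\mathcal G=\lim_n\pi_2^!(\mathcal G/\ell^n)$ in the pro-\'etale world — in other words, making sure that ``constructible, hence derived $\ell$-complete, hence detected mod $\ell$'' can be applied at each node of the diagram and that the comparison map really is compatible with these reductions so that Lemma~\ref{lem:reduction_conservative} is applicable. Once that is pinned down, everything else is a diagram chase identical in spirit to the torsion \'etale case treated in \cite{HS}.
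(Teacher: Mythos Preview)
Your argument for the integral ($\mathcal O_L$-) case is essentially the paper's, organized slightly differently. The paper avoids constructing the comparison map and then checking it modulo~$\ell$; instead it notes directly that $\mathbf D_{X/S}(\mathcal F)\boxtimes_S\mathcal G$, being constructible, equals $\lim_n$ of its reductions mod~$\ell^n$, and then identifies each reduction with the \'etale internal Hom $\sHom(\pi_1^*(\mathcal F/\ell^n),\pi_2^!(\mathcal G/\ell^n))=\sHom_{\mathcal O_L}(\pi_1^*\mathcal F,\pi_2^!(\mathcal G/\ell^n))$ using only that reduction mod~$\ell^n$ is symmetric monoidal (hence preserves internal Homs). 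This sidesteps the ``main obstacle'' you flag: you never need to compute $(\lim_n\pi_2^!(\mathcal G/\ell^n))/\ell$ or worry about $\Tor$-corrections, because the passage to torsion coefficients happens on the level of the 2-category $\mathcal C_S$ rather than on the sheaf $\sHom$. Your route works but requires more bookkeeping than you acknowledge.

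The rational ($L$-) case, however, has a genuine gap. You write that one ``descend[s] the same isomorphism from $\mathcal O_L$-coefficients'' using an integral model $\mathcal G_0$ and exactness of rationalization. But the integral case you just proved needs $(X,\mathcal F)$ dualizable in $\mathcal C_S$ \emph{with $\mathcal O_L$-coefficients}, and dualizability of $(X,\mathcal F)$ with $L$-coefficients does \emph{not} imply that any integral model $\mathcal F_0$ is dualizable with $\mathcal O_L$-coefficients. (This is exactly the phenomenon behind Theorem~\ref{th:SS}\ref{th:SS:integral_comparison} vs.~\ref{th:SS:integral_model}: a rational ULA sheaf need not admit a ULA integral model over the given base.) The paper handles this by invoking \cite[Proposition~3.8]{HS}: there is a $v$-cover $S'\to S$ over which $\mathcal F$ \emph{does} admit a ULA integral model $\widetilde{\mathcal F}_0$, and then one runs a careful $v$-descent argument along the \v Cech nerve $S'^{\bullet/S}$, interchanging totalizations with the filtered colimit $\colim_\ell$ and the limit $\lim_n$ (justified by \cite[Lemma~3.7]{BM} and finitaryness of \'etale cohomology). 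This descent step is the real content of the rational case and is missing from your proposal.
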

\begin{proof}
Assume first $\Lambda$ is integral.
The dual of $(X,\mathcal F)$ is $(X,\mathbf D_{X/S}\mathcal F)$. Then the 
point is that $(X,\mathbf D_{X/S}(\mathcal F))\otimes(Y,\mathcal G)
=(X\times_SY,\mathbf D_{X/S}(\mathcal F)\boxtimes_S\mathcal G)$ is 
an internal Hom in $\mathcal C_S$. As
$\mathbf D_{X/S}(\mathcal F)\boxtimes_S\mathcal G$ is constructible,
$\mathbf D_{X/S}(\mathcal F)\boxtimes_S\mathcal G=\lim_n((\mathbf D_{X/S}(\mathcal F)\boxtimes_S\mathcal G)\otimes_{\ZZ_\ell}\ZZ_\ell/\ell^n)$.
As reduction modulo $\ell^n$ is a symmetric monoidal functor on the
categories $\mathcal C_S$, it preserves internal Homs; therefore
$(\mathbf D_{X/S}(\mathcal F)\boxtimes_S\mathcal G)\otimes_{\ZZ_\ell}\ZZ_\ell/\ell^n=
\sHom(\pi_1^*(\mathcal F/\ell^n),\pi_2^!(\mathcal G/\ell^n))
=\sHom(\pi_1^*\mathcal F,\pi_2^!(\mathcal G/\ell^n))$,\footnote{This uses the fact that if you build the category
$\mathcal C_S$ with $D(?)$ replaced by the derived category of (not
necessarily constructible) \'etale sheaves of
$\mathcal O_L/\ell^n$-modules on $?\to S$ separated and finitely-presented,
then internal Homs always exist and are given by
$\Hom((X,\mathcal F),(Y,\mathcal G))=(X\times_SY,\sHom(\pi_X^*\mathcal F,\pi_Y^!\mathcal G))$.
The argument is the same as the one appearing in the proof of
Lemma~\ref{sec:C_S_internal_homs}, but simpler.} and the sheaf
$\mathbf D_{X/S}(\mathcal F)\boxtimes_S\mathcal G
=\lim_n\sHom(\pi_1^*\mathcal F,\pi_2^!(\mathcal G/\ell^n))
=\sHom(\pi_1^*\mathcal F,\lim_n\pi_2^!(\mathcal G/\ell^n))$
is constructible.

Now suppose $\Lambda$ is rational and suppose $(X,\mathcal F)$ is
dualizable. By \cite[Proposition~3.8]{HS}, there is a $v$-cover $S'\to S$
and an integral model $\widetilde{\mathcal F}_0$ for $\mathcal F|_{X'}$
which is universally locally acyclic over $S'$. We also pick integral 
models $\mathcal F_0,\mathcal G_0$ for $\mathcal F,\mathcal G$ on $X$ and 
$Y$ (not necessarily universally locally acyclic) and let 
$\mathcal F_n:=\mathcal F_0/\ell^n$, etc.
We wish to show the natural map
$\mathbf D_{X/S}(\mathcal F)\boxtimes_S\mathcal G\to
\sHom(\pi_1^*\mathcal F,(\lim_n\pi_2^!\mathcal G_n)[\ell^{-1}])$
is an equivalence. We check on sections over $j:U\to X\times_SY$
in $(X\times_SY)_\proet$. The statement is local, so we may assume $X$, $Y$
and $S$ are affine and that $U=\lim_iU_i\to X\times_SY$ is a limit of
affine \'etale maps $j_i:U_i\to X\times_SY$.
We denote by $S'^{\bullet/S}$ the \v Cech nerve associated to $S'\to S$
as in \S\ref{sec:cons_facts}, and let $X'^{\bullet/X}$ denote the \v Cech
nerve associated to the $v$-cover $X':=X\times_SS'\to X$ obtained via base
change, so that $X'^{\bullet/X}=X\times_SS'^{\bullet/S}$, and likewise for
$Y$. Let $p_1,p_2,\pi_1,\pi_2$ be defined by the diagram on the left below
\begin{equation*}\begin{tikzcd}[column sep=0,row sep=15]
	&U\arrow[d,"j"]\arrow[ddl,bend right,"p_1"']\arrow[ddr,bend left,"p_2"] \\
	&X\times_SY\arrow[dl,"\pi_1"]\arrow[dr,"\pi_2"'] \\
	X\arrow[dr]&&Y\arrow[dl] \\ &S,
\end{tikzcd} \qquad \qquad \begin{tikzcd}[column sep=-30]
	&U'^{\bullet/U}=U\times_SS'^{\bullet/S}\arrow[d,"j"]\arrow[ddl,bend right=40,"p_1"'] \arrow[ddr,bend left=40,"p_2"] \\
	&X\times_SY\times_SS'^{\bullet/S}\arrow[dl,"\pi_1"]\arrow[dr,"\pi_2"'] \\
	X'^{\bullet/X}&&Y'^{\bullet/Y},
\end{tikzcd}\end{equation*}
and let $p_{1i},p_{2i}$ defined similarly in terms of $j_i$ in lieu of $j$.
Put $U':=U\times_SS'$ and let $j,p_1,p_2$ continue to describe base
extensions as in the diagram to the right above, and likewise for $j_i,p_{1i},p_{2i}$.

We use the result for integral coefficients and write
\begin{align*}
	\Gamma(U,\mathbf D_{X/S}(\mathcal F)\boxtimes_S\mathcal G)
	&=\lim_\Delta\Gamma(U'^{\bullet/U},
	j^*(\mathbf D_{X'^{\bullet/X}/S'^{\bullet/S}}(\mathcal F)\boxtimes_S\mathcal G)) \\
	&=\lim_\Delta\Gamma(U'^{\bullet/U},
	j^*(\mathbf D_{X'^{\bullet/X}/S'^{\bullet/S}}(\widetilde{\mathcal F}_0)\boxtimes_S\mathcal G_0)\otimes_{\mathcal O_L}L) \\
	&=\lim_\Delta\Gamma(U'^{\bullet/U},
	j^*\sHom(\pi_1^*\widetilde{\mathcal F}_0,\pi_2^!\mathcal G_0)\otimes_{\mathcal O_L}L) \\
	&=\lim_\Delta\Hom_{U'^{\bullet/U}}(j^*\pi_1^*\mathcal F,j^*\pi_2^!\mathcal G) \\
	&=\lim_\Delta\colim_\ell\lim_n\Hom_{U'^{\bullet/U}}(j^*\pi_1^*\mathcal F_n,j^*\pi_2^!\mathcal G_n) \\
	&=\lim_\Delta\colim_\ell\lim_n\Hom_{X\times_SY\times_SS'^{\bullet/S}}(\pi_1^*\mathcal F_n,j_*j^*\pi_2^!\mathcal G_n) \\
	&=\lim_\Delta\colim_\ell\lim_n\colim_i\Hom_{X\times_SY\times_SS'^{\bullet/S}}(\pi_1^*\mathcal F_n,j_{i*}j_i^*\pi_2^!\mathcal G_n) \\
	&=\lim_\Delta\colim_\ell\lim_n\colim_i\Hom_{U_i'^{\bullet/U_i}}(p_{1i}^*\mathcal F_n,p_{2i}^!\mathcal G_n) \\
	&=\colim_\ell\lim_n\colim_i\lim_\Delta\Hom_{Y'^{\bullet/Y}}(p_{2i!}p_{1i}^*\mathcal F_n,\mathcal G_n) \\
	&=\colim_\ell\lim_n\colim_i\Hom_Y(p_{2i!}p_{1i}^*\mathcal F_n,\mathcal G_n) \\
	&=\colim_\ell\lim_n\colim_i\Hom_{X\times_SY}(\pi_1^*\mathcal F_n,j_{i*}j_i^*\pi_2^!\mathcal G_n) \\
	&=\colim_\ell\lim_n\Hom_{X\times_SY}(\pi_1^*\mathcal F_n,j_*j^*\pi_2^!\mathcal G_n) \\
	&=\colim_\ell\lim_n\Hom_U(j^*\pi_1^*\mathcal F_n,j^*\pi_2^!\mathcal G_n) \\
	&=\Hom_U(j^*\pi_1^*\mathcal F,j^*\pi_2^!\mathcal G) \\
	&=\Gamma(U,\sHom_{X\times_SY}(\pi_1^*\mathcal F,(\lim_n\pi_2^!\mathcal G_n)[\ell^{-1}])).
\end{align*}
The point is that we need both arguments of the $\Hom$ to be constructible
to apply $v$-descent. This is true for
$\Hom(p_{2i!}p_{1i}^*\mathcal F_n,\mathcal G_n)$ because $p_{2i}$ is
separated of finite presentation (this is why we pass from $j_*j^*$ to 
$\colim_i j_{i*}j_i^*$).
We can pass the totalization through filtered colimits because the
totalization behaves like a finite limit as 
$\mathcal D^{\geq0}(\mathcal O_L/\ell^n)$ is compactly generated by
cotruncated objects \cite[Lemma 3.7]{BM}.
Finally, the isomorphism
$\colim_ij_{i*}j_i^*\pi_2^!\mathcal G_n\xra\sim j_*j^*\pi_2^!\mathcal G_n$
expresses the finitaryness of \'etale cohomology for bounded below 
complexes
\cite[\href{https://stacks.math.columbia.edu/tag/0GIV}{\texttt{0GIV}}]{Stacks}.
\end{proof}

\subsection{}\label{sec:def_local_acyclicity}
Fix a morphism of schemes $f:X\to S$ locally of finite presentation and a
sheaf $\mathcal F$ in $D(X)$, defining an object $(X,\mathcal F)$ in 
$\mathcal C_S$. If $f$ is separated and $X$ and $S$ are qcqs, following Lu
and Zheng, Hansen and Scholze define $f$ to be universally locally acyclic
relative to $\mathcal F$ if $(X,\mathcal F)$ is dualizable in 
$\mathcal C_S$.
We already made in Definition~\ref{sec:intro_ula} a definition of universal
local acyclicity for $f$ that has a purely local characterization.
This definition specializes to the one of Hansen and Scholze, as is seen 
from the following
\begin{proposition-cite*}[{\cite[Theorem 4.4]{HS}}]\label{prop:HS4.4}
	Let $f:X\to S$ be a locally finitely presented morphism of schemes and 
	let $\mathcal F\in D(X)$. Then the following are equivalent:
	\begin{enumerate}[label=(\roman*)]
	\item\label{4.4_ula_TFAE} The sheaf $\mathcal F$ is universally locally
	acyclic over $S$; i.e. for any geometric point $x\to X$ and generization
	$t\to S$ of $f(x)$, the map
	$\mathcal F_x:=\Gamma(X_x,\mathcal F)\to\Gamma(X_x\times_{S_{f(x)}}t,\mathcal F)$
	is an isomorphism, and this remains true after arbitrary base change in
	$S$.
	\item\label{4.4_cover_TFAE} The scheme $S$ can be covered by qcqs opens
	$\{V_i\}$ with the property that for each $i$, $f^{-1}(V_i)$ can be
	covered by qcqs opens $\{U_{ij}\}$, each separated over $V_i$, such that
	$(U_{ij},\mathcal F|_{U_{ij}})$ defines a dualizable object of 
	$\mathcal C_{V_i}$ for all $i,j$.
	\item\label{4.4_all_cover_TFAE} For every cover of $S$ by qcqs opens 
	$\{V_i\}$ and for every cover of $f^{-1}(V_i)$ by qcqs opens 
	$\{U_{ij}\}$, each separated over $V_i$, $(U_{ij},\mathcal F|_{U_{ij}})$ 
	defines a dualizable object of $\mathcal C_{V_i}$ for all $i,j$.
	\item\label{4.4_affine_TFAE} For every pair of affine opens 
	$U\subset X$, $V\subset S$ with $f(U)\subset V$, $(U,\mathcal F|_U)$
	defines a dualizable object in $\mathcal C_V$.
	\item\label{4.4_milnor_fiber_TFAE} For any geometric point $x\to X$ and 
	generization $t\to S$ of $f(x)$, the map 
	$\mathcal F_x=\Gamma(X_x,\mathcal F)\to\Gamma(X_x\times_{S_{f(x)}}S_t)$
	is an isomorphism, and this remains true after arbitrary base change in
	$S$.
	\item\label{4.4_rank1}
	After every base change along $\Spec V\to S$ with $V$ a rank 1 valuation
	ring with algebraically closed fraction field $K$, and for every
	geometric point $x\to X(:=X_V)$ in the special fiber, the map
	$\mathcal F_x
	=\Gamma(X_x,\mathcal F)\to\Gamma(X_x\times_{\Spec V}\Spec K,\mathcal F)$
	is an isomorphism.
	\end{enumerate}
	Moreover, the property of $\mathcal F$ being universally locally acyclic
	over $S$ is stable under base change in $S$ and arc-local on $S$.
\end{proposition-cite*}
\begin{proof}
That the property is stable under base change is immediate from the
stability under base change of \ref{4.4_ula_TFAE}.
That \ref{4.4_all_cover_TFAE} implies \ref{4.4_affine_TFAE} is clear.
As \ref{4.4_ula_TFAE}, \ref{4.4_milnor_fiber_TFAE} and \ref{4.4_rank1} are
manifestly \'etale-local on the source, that each is implied by
\ref{4.4_cover_TFAE} and each implies \ref{4.4_all_cover_TFAE} follows from
\cite[Theorem 4.4]{HS}. The same theorem also shows that
\ref{4.4_affine_TFAE} implies \ref{4.4_rank1} and that the property of
being arc-local on the base holds when $X$ and $S$ are qcqs and $f$ is
separated. This implies the conclusion in general, e.g. using
\ref{4.4_affine_TFAE}.
\comment{
Some more words on why \ref{4.4_cover_TFAE} implies \ref{4.4_ula_TFAE}
after arbitrary base change in $S$: let $S'\to S$ be any base change and
$x\to X\times_SS'$ be a geometric point. Then the image of $x$ in $X$ lies
in some $U_{ij}$, and in fact we can replace $X$ and $S$ by 
$U_{ij}\to V_j$. Shrinking $S'$, we may assume it too is affine.
Then $X\times_SS'$ is affine and $(X\times_SS',\mathcal F|_{X\times_SS'})$ 
is dualizable in $\mathcal C_{S'}$ by the stability of that property under
base change.

Some more words on arc-local on $S$ in general: let $S'\to S$ be an 
arc-cover such that $\mathcal F|_{S'}$ is universally locally acyclic over
$S'$. Fix $U$ and $V$ as in \ref{4.4_affine_TFAE}; we must show that 
$(U,\mathcal F|_U)$ is dualizable in $\mathcal C_V$. For this purpose we
may assume $V=S$. By \cite[Definition 1.1(3)]{BM} and stability under base
change we may assume $S'$ is affine. Then $U\times_SS'$ is affine, so
\ref{4.4_affine_TFAE} allows us to conclude that the inverse image of 
$\mathcal F$ there is a dualizable object of $\mathcal C_{S'}$, hence that
$\mathcal F|_U$ is a dualizable object of $\mathcal C_S=\mathcal C_V$, as
desired.}
\end{proof}
It's clear that Definition~\ref{sec:intro_ula} can just as well be made
for coefficient rings such as $\mathcal O_L/\ell^n$ with $L/\QQ_\ell$ a
finite extension (or more generally for discrete coefficient rings killed
by some power of $\ell$); i.e. for constructible \'etale sheaves (as
$D(?,\mathcal O_L/\ell^n)\simeq D_{\mathrm{cons}}(\et ?,\mathcal O_L/\ell^n)$ \cite[Proposition 7.1]{HRS}).
The previous proposition continues to hold in that setting, as does
the following result, which shows that this notion of universal local
acyclicity satisfies the desired formalism.
\begin{lemma*}\label{lem:stabilities}
Let $X$ and $Y$ be schemes locally of finite presentation over a scheme 
$S$.
\begin{enumerate}[label=(\roman*)]
\item\label{lem:stabilities:smooth} If $f:X\to Y$ is a smooth map over $S$ 
and $\mathcal F\in D(Y)$ is universally locally acyclic over $S$,
then $f^*\mathcal F$ is universally locally acyclic over $S$.
\item\label{lem:stabilities:proper} If $f:X\to Y$ is a proper 
finitely-presented map over $S$ and $\mathcal F\in D(X)$ is universally 
locally acyclic over $S$, then $f_*\mathcal F$ is universally locally
acyclic over $S$.
\item\label{lem:stabilities:base} If $S\to S'$ is a smooth map to a scheme 
$S'$ and $\mathcal F\in D(X)$ is universally locally acyclic over $S$, then 
$\mathcal F$ is universally locally acyclic over $S'$.
\item\label{lem:stabilities:support} If $i:X\hookrightarrow Y$ is a 
constructible closed immersion, then $\mathcal F\in D(X)$ is universally
locally acyclic over $S$ if and only if $i_*\mathcal F$ is universally
locally acyclic over $S$.
\end{enumerate}
\end{lemma*}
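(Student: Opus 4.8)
The plan is to reduce all four assertions to the case of torsion \'etale coefficients, where they are classical, and to propagate the statements to $\ell$-adic coefficients by means of the symmetric monoidal reduction-mod-$\ell$ and rationalization functors on the categories $\mathcal C_S$ together with the arc-locality of universal local acyclicity. First I would dispose of the locality in $S$: by Proposition~\ref{prop:HS4.4}\,\ref{4.4_affine_TFAE} each of the four properties may be checked on affine opens, so I may assume $X$, $Y$, $S$ (and $S'$) affine and the structure maps separated, and interpret universal local acyclicity as dualizability in the relevant $\mathcal C_S$. For the reduction in the coefficient ring I would argue in three steps. If $\Lambda=E$ is rational, pick integral models and use that $(-)\otimes_{\mathcal O_E}E\colon\mathcal C_{S,\mathcal O_E}\to\mathcal C_{S,E}$ is symmetric monoidal (so preserves dualizable objects) in one direction and, in the other, the fact \cite[Proposition~3.8]{HS} that a universally locally acyclic sheaf admits a universally locally acyclic integral model $v$-locally on $S$, combined with arc-descent (Proposition~\ref{prop:HS4.4}); since $f^{*}$, $f_{*}$ for $f$ proper, and $i_{*}$ all commute with $(-)\otimes_{\mathcal O_E}E$, this lets me replace $\mathcal F$ by an integral model. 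If $\Lambda=\mathcal O_L$ with $L/\QQ_\ell$ infinite algebraic, write $\mathcal F=\mathcal F'\otimes_{\mathcal O_{L'}}\mathcal O_L$ with $L'/\QQ_\ell$ finite and descend dualizability along $\mathcal D(-,\mathcal O_L)=\colim_{L'}\mathcal D(-,\mathcal O_{L'})$. Finally, with $\Lambda=\mathcal O_E$ and $E/\QQ_\ell$ finite, reduce to $\Lambda=\mathcal O_E/\ell$: one implication is that reduction mod $\ell$ is symmetric monoidal, and the converse --- that $\mathcal F/\ell$ universally locally acyclic over $S$ forces $\mathcal F$ to be --- follows from the criterion of Proposition~\ref{prop:HS4.4}\,\ref{4.4_rank1} applied after base change to (strictly henselian) rank-$1$ valuation rings, where vanishing cycles detect universal local acyclicity, $\phi$ commutes with reduction mod $\ell$ (Lemma~\ref{lem:vanishing_cycles_mod_ell}), and reduction mod $\ell$ is conservative on derived $\ell$-complete objects (Lemma~\ref{lem:reduction_conservative}).

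Part~\ref{lem:stabilities:support} is then essentially a tautology, and can in fact be proved directly for any coefficients: $i_{*}$ is compatible with base change in $S$ and satisfies $i^{*}i_{*}=\mathrm{id}$, and for a geometric point $\overline x$ of $Y$ one has $X\times_YY_{\overline x}=X_{\overline x}$ if $\overline x$ lies on $X$ and $\emptyset$ otherwise; hence the stalk/Milnor-fibre comparison defining universal local acyclicity of $i_{*}\mathcal F$ at $\overline x$ is literally the one for $\mathcal F$ at $\overline x$ (and is trivially satisfied when $\overline x\notin X$), and this persists after arbitrary base change in $S$. Alternatively, the forward implication in \ref{lem:stabilities:support} is the special case $f=i$ of \ref{lem:stabilities:proper}.

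For the remaining parts in the torsion case I would proceed as follows. Part~\ref{lem:stabilities:proper} is the classical stability of local acyclicity under proper pushforward (deduced from proper base change, as used already in \cite{Sasha}); within the present formalism one may instead exhibit $(Y,f_{*}\mathbf D_{X/S}(\mathcal F))$ as the dual of $(Y,f_{*}\mathcal F)$ via relative duality $\mathbf D_{Y/S}f_{!}=f_{*}\mathbf D_{X/S}$ and verify the criterion of Corollary~\ref{HS:3.3_corollary}, which unwinds to proper base change, the projection formula and K\"unneth. Parts~\ref{lem:stabilities:smooth} and \ref{lem:stabilities:base} are the classical facts that smooth pull-back and smooth change of base preserve local acyclicity; for \ref{lem:stabilities:smooth} the dualizability proof amounts to the identity $\mathbf D_{X/S}(f^{*}\mathcal F)=f^{*}\mathbf D_{Y/S}(\mathcal F)(d)[2d]$ for $f$ smooth of relative dimension $d$ together with the compatibility of $\sHom$, $(-)^{!}$ and $\boxtimes_{S}$ with smooth pull-back that enters Corollary~\ref{HS:3.3_corollary}, while \ref{lem:stabilities:base} has no analogous monoidal shortcut --- there is no functor $\mathcal C_S\to\mathcal C_{S'}$ attached to a smooth $S\to S'$ --- and is handled through the stalkwise description of universal local acyclicity, using that smooth morphisms are universally locally acyclic and that universal local acyclicity is stable under composition with them.

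The step I expect to be the main obstacle is the converse half of the reduction to torsion coefficients, namely that a mod-$\ell$ universally locally acyclic $\mathcal O_E$-sheaf is universally locally acyclic: this forces one to work over non-noetherian rank-$1$ valuation rings, so Lemma~\ref{lem:vanishing_cycles_mod_ell}, stated for a strictly henselian dvr, must either be applied after a further reduction to the henselian discrete-valuation-ring case or re-proved in the needed generality from \cite[Theorem~4.1]{HS} and \cite[Lemma~6.5.9]{BS}, and one must check that the conservativity of $(-)/\ell$ is being invoked on the (constructible, hence derived $\ell$-complete) vanishing-cycles complexes. A secondary point requiring care is part~\ref{lem:stabilities:base}, which genuinely rests on the local/vanishing-cycles picture rather than on any dualizability bookkeeping.
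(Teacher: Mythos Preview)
Your proposal is workable but takes a longer route than the paper. The paper does \emph{not} reduce to torsion coefficients. For parts~\ref{lem:stabilities:smooth} and~\ref{lem:stabilities:proper} it verifies the dualizability criterion (Corollary~\ref{HS:3.3_corollary}) directly in the $\ell$-adic setting: the computations you yourself sketch as an alternative---$f^*\mathbf D_{Y/S}(\mathcal F)=\mathbf D_{X/S}(f^*\mathcal F)(-d)[-2d]$ for smooth $f$ of relative dimension $d$, and the K\"unneth/base-change/projection-formula chain yielding $\mathbf D_{Y/S}(f_*\mathcal F)\boxtimes_S f_*\mathcal F\simeq\sHom(\pi_1^*f_*\mathcal F,\pi_2^!f_*\mathcal F)$ for proper $f$---go through verbatim once Proposition~\ref{prop:HS_3.3_dualizable_internal_hom} and Corollary~\ref{HS:3.3_corollary} are available. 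The paper only reduces to $\Lambda$ a finite extension of $\QQ_\ell$ or its ring of integers, not all the way to $\mathcal O_E/\ell$. So your reduction machinery is unnecessary overhead for these parts, and the dualizability argument you mention \emph{is} the paper's proof.

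Part~\ref{lem:stabilities:support} is handled identically by both of you, via the stalk/Milnor-fibre description.

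The real difference is part~\ref{lem:stabilities:base}. The paper's argument is short: after replacing $S'$ by $g(S)$, the smooth surjection $g\colon S\to S'$ is fppf, hence a $v$-cover; by arc-locality of ULA on the base it suffices that $\mathcal F|_{X\times_{S'}S}$ be ULA over $S$, and this follows from part~\ref{lem:stabilities:smooth} since $X\times_{S'}S\to X$ is smooth. Your reduction-to-torsion plan for~\ref{lem:stabilities:base} has a wrinkle: \cite[Proposition~3.8]{HS} produces a ULA integral model only after a $v$-cover $T\to S$, and $T\to S'$ need not be smooth, so you cannot invoke the integral case of~\ref{lem:stabilities:base} directly for that map. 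This is patchable (replace $S'$ by $g(S)$ first and use $T\to S\to S'$ as a $v$-cover of $S'$), but the paper's route via arc-locality and part~\ref{lem:stabilities:smooth} avoids the issue entirely and works uniformly in the coefficients.
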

\begin{proof}
\ref{lem:stabilities:support}\quad
If $x\to X$ is a geometric point, then as $Y_x\times_YX=X_x$, 
$\mathcal F_x=\Gamma(X_x,\mathcal F)=\Gamma(Y_x,i_*\mathcal F)$ and
$\Gamma(X_x\times_{S_{f(x)}}t,\mathcal F)
=\Gamma(Y_x\times_{S_{f(x)}}t,i_*\mathcal F)$,
we may conclude that
$\mathcal F_x\to\Gamma(X_x\times_{S_{f(x)}}t,\mathcal F)$ is an
isomorphism if and only if
$\Gamma(Y_x,i_*\mathcal F)\to \Gamma(Y_x\times_{S_{f(x)}}t,i_*\mathcal F)$
is.

\ref{lem:stabilities:base} Let $g:S\to S'$ be our smooth morphism.
We may assume $X$, $S$ and $S'$ are qcqs and that the morphisms between 
them are separated. We may also replace $S'$ by $g(S)$ and assume $g$ is
fppf, hence a $v$-cover. As the property of being universally locally
acyclic is arc-local on the base, it will suffice to show that
$\mathcal F|_{X\times_{S'}S}$ is universally locally acyclic over $S$.
By hypothesis, $\mathcal F$ is universally locally acyclic over $S$.
Part \ref{lem:stabilities:smooth} now implies the desired conclusion.

For \ref{lem:stabilities:smooth} and \ref{lem:stabilities:proper}, one
reduces to the case of $X$ and $Y$ qcqs and separated over $S$ by
looking Zariski-locally using the previous proposition.
Therefore we may assume $X$, $Y$ and $S$ are all qcqs and that $X$ and $Y$
are separated over $S$. We also reduce to $\Lambda$ a finite extension $L$
of $\QQ_\ell$ or its ring of integers.
Let $\mathcal F_n:=\mathcal F/\ell^n$ if $\Lambda$ is integral.

\ref{lem:stabilities:smooth} Suppose $f$ has relative dimension $d$. Then
$f^*\mathbf D_{Y/S}(\mathcal F)=\mathbf D_{X/S}(f^*\mathcal F)(-d)[-2d]$
and
$(f\times_Sf)^*\sHom_{Y\times_SY}(\pi_1^*\mathcal F,\pi_2^!\mathcal F)
=\sHom_{X\times_SX}(\pi_1^*f^*\mathcal F,\pi_2^!f^*\mathcal F)(-d)[-2d]$,
where $\pi_2^!\mathcal F$ continues to denote $\lim_n\pi_2^!\mathcal F_n$
if $\Lambda$ is integral and $(\lim_n\pi_2^!\mathcal F_n)[\ell^{-1}]$ if
$\Lambda$ is rational.
In particular, both are constructible, and the morphism
\begin{equation*}\mathbf D_{X/S}(f^*\mathcal F)\boxtimes_S f^*\mathcal F\to
\sHom(\pi_1^*f^*\mathcal F,\pi_2^!f^*\mathcal F)\end{equation*}
is an isomorphism, so we can conclude by 
Corollary~\ref{HS:3.3_corollary} that $f^*\mathcal F$ is universally
locally acyclic over $S$.

\ref{lem:stabilities:proper} Local Verdier duality gives 
$f_*\DD_{X/S}(\mathcal F)=\DD_{Y/S}(f_*\mathcal F)$. 
In the case of integral coefficients, that and the K\"unneth formula allow
us to write
\begin{align*}
	\DD_{Y/S}(f_*\mathcal F)\boxtimes_S f_*\mathcal F
	&=(f\times_S f)_*(\DD_{X/S}(\mathcal F)\boxtimes_S\mathcal F) \\
	&\xra\sim(f\times_S f)_*\sHom(\pi_1^*\mathcal F,\pi_2^!\mathcal F) \\
	&=\lim_n(f\times_S\id)_*(\id\times_Sf)_*\sHom_{X\times_SX}((\id\times_Sf)^*\pi_1^*\mathcal F_n,\pi_2^!\mathcal F_n) \\
	&=\lim_n(f\times_S\id)_*\sHom_{X\times_SY}(\pi_1^*\mathcal F_n,\pi_2^!f_*\mathcal F_n) \\
	&=\lim_n(f\times_S\id)_*\sHom_{X\times_SY}(\pi_1^*\mathcal F_n,(f\times_S\id)^!\pi_2^!f_*\mathcal F_n) \\
	&=\sHom_{Y\times_SY}(\pi_1^*f_*\mathcal F,\pi_2^!f_*\mathcal F),
\end{align*}
where here we use $\pi_1$ for both projections in the diagram below
\begin{equation*}\begin{tikzcd}[column sep=10]
	X\times_S X\arrow[rr,"\id\times_Sf"]
	\arrow[dr,"\pi_1"']&&X\times_SY\arrow[dl,"\pi_1"] \\ &X,
\end{tikzcd}\end{equation*}
and likewise for $\pi_2$. Of course, the same argument works for rational 
coefficients after choosing an integral model $\mathcal F_0$ and pulling 
out both the limit and the colimit in
$\pi_2^!\mathcal F=\colim_\ell\lim_n\pi_2^!(\mathcal F_0/\ell^n)$, allowing 
us to conclude by Corollary~\ref{HS:3.3_corollary} that $f_*\mathcal F$ is
universally locally acyclic over $S$.

This completes the proof of the lemma.
\comment{
More words for \ref{lem:stabilities:proper}: recall that a 
finitely-presented map is qcqs by definition. Now cover $S$ by affines
$V_i$, cover $Y$ by affines $U_{ij}$ as in Proposition~\ref{prop:HS4.4}.
Then $f^{-1}(U_{ij})$ is quasi-compact as $f$ is quasi-compact, and it's
quasi-separated since it's quasi-separated over $U_{ij}$ (as the base
change of $f$ along $U_{ij}\hookrightarrow Y$).}
\end{proof}

\subsection{} We can use the results of 
\S\ref{sec:C_S_internal_homs}--\ref{sec:converse_internal_hom}
to obtain some basic facts about universal local acyclicity.
\begin{lemma*}\label{lem:id_lisse}\begin{enumerate}[label=(\roman*)]
\item\label{lem:id_lisse:id} Let $X$ be a scheme and $\mathcal F\in D(X)$.
Then $\mathcal F$ is universally locally acyclic over $X$ if and only if 
$\mathcal F$ is lisse.
\item\label{lem:id_lisse:smooth} Let $f:X\to S$ be a smooth map of schemes 
and $\mathcal F\in D(X)$ lisse. Then $f$ is universally locally acyclic
relative to $\mathcal F$.
\end{enumerate}
\end{lemma*}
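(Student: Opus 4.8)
The plan is to deduce \ref{lem:id_lisse:id} from the dualizability criteria of \S\ref{sec:C_S_internal_homs}--\ref{sec:converse_internal_hom}, specialised to the identity map $X\to X$, and then to obtain \ref{lem:id_lisse:smooth} formally from it.

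For \ref{lem:id_lisse:id}: both ``$\mathcal F$ is lisse'' and ``$\mathcal F$ is universally locally acyclic over $X$'' are Zariski-local on $X$ — the latter by Proposition~\ref{prop:HS4.4}, taking $V=U$ in \ref{4.4_affine_TFAE} — so one may assume $X$ affine, hence quasi-compact and quasi-separated; and using the limit/colimit presentations of $\mathcal D(X)$ and $\mathcal D_\lis(X)$ from \S\ref{sec:cons_facts} (which apply levelwise to the categories $\mathcal C_X$, on which extension of scalars is symmetric monoidal), one may further assume $\Lambda$ is a finite extension $L$ of $\QQ_\ell$ or its ring of integers, so that Proposition~\ref{prop:HS4.4}\ref{4.4_affine_TFAE} identifies universal local acyclicity of $\mathcal F$ over $X$ with dualizability of $(X,\mathcal F)$ in $\mathcal C_X$. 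Now specialise: for the base $S=X$ and the identity structure map one has $X\times_XX=X$ with both projections the identity and $\mathrm{id}^!=\mathrm{id}$, so $\mathbf D_{X/X}(\mathcal F)=\sHom(\mathcal F,\Lambda)$, the sheaf $\pi_2^!\mathcal F=\lim_n\mathcal F/\ell^n$ is just $\mathcal F$ by derived $\ell$-completeness, and the external product $\boxtimes_X$ coincides with $\otimes$. Consequently the comparison map of Corollary~\ref{HS:3.3_corollary} and of its converse Proposition~\ref{prop:HS_3.3_dualizable_internal_hom} (the latter applied with $(Y,\mathcal G)=(X,\mathcal F)$) degenerates to the canonical map $\sHom(\mathcal F,\Lambda)\otimes\mathcal F\to\sHom(\mathcal F,\mathcal F)$ in $D(X_\proet,\Lambda)$, and those two results together say that $(X,\mathcal F)$ is dualizable in $\mathcal C_X$ exactly when this map is an isomorphism (and $\sHom(\mathcal F,\Lambda)$ is constructible). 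By Lemma~\ref{sec:duals_in_2cats}, specialised to the closed symmetric monoidal $\infty$-category $\mathcal D(X_\proet,\Lambda)$ — in which internal $\sHom$ always exists — this map being an isomorphism is precisely the condition that $\mathcal F$ be dualizable there, i.e.\ lisse; and when $\mathcal F$ is lisse, $\sHom(\mathcal F,\Lambda)=\mathcal F^\vee$ is lisse, hence constructible, and the map is visibly an isomorphism. This proves \ref{lem:id_lisse:id}.

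Part \ref{lem:id_lisse:smooth} is then immediate. By \ref{lem:id_lisse:id}, a lisse $\mathcal F\in D(X)$ is universally locally acyclic over $X$, i.e.\ relative to $\mathrm{id}\colon X\to X$. Applying Lemma~\ref{lem:stabilities}\ref{lem:stabilities:base} with the base scheme of that lemma taken to be $X$ (over which $X$ is trivially locally of finite presentation) and with its smooth morphism ``$S\to S'$'' taken to be $f\colon X\to S$, one concludes that $\mathcal F$ is universally locally acyclic over $S$; the structure map $X\to S$ so produced is exactly $f$, which is the assertion.

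The main obstacle is the ``only if'' half of \ref{lem:id_lisse:id}: passing from dualizability of $(X,\mathcal F)$ in $\mathcal C_X$ to lisseness of $\mathcal F$ genuinely requires the converse internal-Hom formula of Proposition~\ref{prop:HS_3.3_dualizable_internal_hom}, which itself rests on the existence of a universally locally acyclic integral model after a $v$-cover of the base. The only other delicate point is the reduction to finite-rank coefficients, where one must check that the colimit descriptions of \S\ref{sec:cons_facts} are compatible with \emph{both} the notion of lisse sheaf and that of universal local acyclicity.
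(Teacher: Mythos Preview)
Your proof is correct and follows essentially the same route as the paper's. Both arguments reduce to $X$ qcqs, identify universal local acyclicity over $X$ with dualizability of $(X,\mathcal F)$ in $\mathcal C_X$, specialise the internal-Hom criteria of \S\ref{sec:C_S_internal_homs}--\ref{sec:converse_internal_hom} to the identity map so that the relevant comparison becomes $\sHom(\mathcal F,\Lambda)\otimes\mathcal F\to\sHom(\mathcal F,\mathcal F)$, and then deduce \ref{lem:id_lisse:smooth} from \ref{lem:id_lisse:id} via Lemma~\ref{lem:stabilities}\ref{lem:stabilities:base}. The only cosmetic difference is that for the implication ``ULA $\Rightarrow$ lisse'' the paper invokes Proposition~\ref{prop:HS_3.3_dualizable_internal_hom} for \emph{every} $\mathcal G$ and then appeals to \cite[Lemma 4.6.1.6]{HA}, whereas you use it only for $\mathcal G=\mathcal F$ and conclude via the split-epimorphism criterion of Lemma~\ref{sec:duals_in_2cats}; your variant is marginally more economical but not materially different.
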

\begin{proof}\ref{lem:id_lisse:id}
As the properties of being universally locally acyclic and of being lisse
are both Zariski-local on the source \cite[Lemma 4.5]{HRS}, we may assume
$X$ is qcqs. The property of $\mathcal F$ being universally locally acyclic
over $X$ implies that for any $\mathcal G$ in $D(X)$,
$\sHom(\mathcal F,\Lambda)\otimes\mathcal G
\to\sHom(\mathcal F,\mathcal G)$
is an isomorphism by 
Proposition~\ref{sec:converse_internal_hom},\footnote{We may approximate
$\mathcal F$ and $\mathcal G$ by sheaves $\mathcal F'$ and $\mathcal G'$
over a finite extension of $\QQ_\ell$. Here, $\mathcal F'$ must be
universally locally acyclic, as this can be verified by the vanishing of
certain cones, and extension of scalars from $\mathcal O_L$ to 
$\mathcal O_E$ or from $L$ to $E$ ($L/\QQ_\ell$ finite, $E/\QQ_\ell$
algebraic, $L\subset E$) is faithfully flat.} which 
implies that $\mathcal F$ is dualizable in $\mathcal D(X)$
\cite[Lemma 4.6.1.6]{HA}, and conversely for $\mathcal G=\mathcal F$ by
Corollary~\ref{HS:3.3_corollary}.\footnote{Indeed, if 
$\mathcal F$ is lisse, then 
$\mathbf D_{X/X}(\mathcal F)=\sHom(\mathcal F,\Lambda)=\mathcal F^\vee$ is,
too (in particular constructible).}

\ref{lem:id_lisse:smooth} It follows from \ref{lem:id_lisse:id} in view of
Lemma~\ref{lem:stabilities}\ref{lem:stabilities:base}.
\end{proof}	
Of course, the same is true for constructible \'etale sheaves of
$\ZZ/\ell^n$-modules using \cite[Propositions 3.3 \& 3.4(iv)]{HS}.

\subsection{} In this section and the next we study universal local
acyclicity over regular 0- and 1-dimensional bases and recover some
familiar results.
\begin{lemma*}\label{lem:ula_field}
	If $X$ is a variety, then any $\mathcal F\in D(X)$ is universally
	locally acyclic over $\Spec k$.
\end{lemma*}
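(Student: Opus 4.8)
The plan is to show that $(X,\mathcal F)$ is dualizable in $\mathcal C_{\Spec k}$ and apply Corollary~\ref{HS:3.3_corollary}. First I would make two harmless reductions. The assertion is Zariski-local on $X$: by Proposition~\ref{prop:HS4.4}\ref{4.4_affine_TFAE}, applied with $S=\Spec k$ (whose only affine open is itself), $\mathcal F$ is universally locally acyclic over $\Spec k$ if and only if $(U,\mathcal F|_U)$ is dualizable in $\mathcal C_{\Spec k}$ for every affine open $U\subseteq X$, so we may assume $X$ is affine, hence separated of finite type over $k$. We may also assume $\Lambda=\mathcal O_L$ or $L$ with $L/\QQ_\ell$ finite: when $L/\QQ_\ell$ is infinite algebraic, $\mathcal F$ descends to a sheaf over a finite subextension, and universal local acyclicity of that sheaf is equivalent to that of $\mathcal F$, since it amounts to the vanishing of certain cones and the relevant extensions of scalars are faithfully flat (as in the proof of Lemma~\ref{lem:id_lisse}).

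So let $X$ be separated of finite type over $k$, with structure map $a\colon X\to\Spec k$, and $\Lambda=\mathcal O_L$ or $L$, $L/\QQ_\ell$ finite. By Corollary~\ref{HS:3.3_corollary}, $(X,\mathcal F)$ is dualizable in $\mathcal C_{\Spec k}$ provided $\mathbf D_{X/k}(\mathcal F)=\sHom(\mathcal F,a^!\Lambda)$ is constructible and the canonical map
\begin{equation*}
	\mathbf D_{X/k}(\mathcal F)\boxtimes_k\mathcal F\longrightarrow\sHom(\pi_1^*\mathcal F,\pi_2^!\mathcal F)
\end{equation*}
is an isomorphism in $D(X\times_k X)$, where $\pi_2^!(-)$ carries the pro-\'etale meaning fixed in that corollary. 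The first condition is the fact that Verdier duality over a field preserves $D(X)$, which is standard.

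The second condition is an instance of the Künneth formula for $\sHom$ over a field: for $X,Y$ separated of finite type over $k$, $\mathcal F\in D(X)$ and $\mathcal G\in D(Y)$, the canonical map $\mathbf D_{X/k}(\mathcal F)\boxtimes_k\mathcal G\to\sHom(\pi_1^*\mathcal F,\pi_2^!\mathcal G)$ on $X\times_k Y$ is an isomorphism. For torsion coefficients $\mathcal O_L/\ell^n$ this is classical, reducing to the Künneth formula $\omega_{X\times_k Y}=\omega_X\boxtimes_k\omega_Y$ for dualizing complexes; the integral and rational $\ell$-adic cases then follow, because $a^!\Lambda$, the functors $\pi_2^!$, and $\sHom$ of constructible sheaves all commute with reduction modulo $\ell$ and, via integral models, with rationalization --- the same mechanism used repeatedly elsewhere in the paper (compare \cite[\S6.7]{BS}). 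I expect this last point --- spelling out the Künneth isomorphism in the pro-\'etale framework, rather than any genuinely new idea --- to be the only real work.

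As an alternative one could verify criterion~\ref{4.4_rank1} of Proposition~\ref{prop:HS4.4} directly. After reducing to $k$ algebraically closed (by arc-descent along $\Spec\sep k\to\Spec k$) and base-changing along $\Spec V\to\Spec k$ with $V$ a rank~$1$ valuation ring with algebraically closed fraction field $K$, the sheaf on $X_V=X\times_k V$ is the pullback of $\mathcal F$ along the projection $p\colon X_V\to X$; hence for a geometric point $x\to X_V$ in the special fiber, $\Gamma((X_V)_x,\mathcal F)$ is simply the stalk of $\mathcal F$ at $p(x)\in X$, and the content is to identify the Milnor-fibre cohomology $\Gamma((X_V)_x\times_V\Spec K,\mathcal F)$ with that same stalk. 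This rests on the (ind-)smoothness of $\Spec V\to\Spec k$, so it carries the same weight as the Künneth input above.
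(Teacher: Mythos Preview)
Both of your approaches are correct. The paper's proof is shorter than either: it reduces to $k$ algebraically closed using that universal local acyclicity is $v$-local on the base (Proposition~\ref{prop:HS4.4}), and then simply cites \cite[Theorem~4.1]{HS}. That theorem treats the case where the base is the spectrum of an absolutely integrally closed valuation ring, and an algebraically closed field is such a ring of rank~$0$, where the condition degenerates to nothing. Your alternative approach is exactly this reduction, except that you propose to verify the Milnor-fibre criterion by hand via the ind-smoothness of $V$ over $k$; the paper absorbs that verification into the citation.

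Your main approach, via Corollary~\ref{HS:3.3_corollary} and the K\"unneth formula for $\sHom$ over a field, is genuinely different and more self-contained: it unpacks in this special case what \cite[Theorem~4.1]{HS} is really asserting, namely that the canonical map $\mathbf D_{X/k}(\mathcal F)\boxtimes_k\mathcal G\to\sHom(\pi_1^*\mathcal F,\pi_2^!\mathcal G)$ is an isomorphism for all constructible $\mathcal F,\mathcal G$. The reduction to torsion coefficients and the passage back up via $\ell$-completeness and rationalization work exactly as you describe (and as in the proofs of Lemma~\ref{lem:stabilities} and Proposition~\ref{prop:HS_3.3_dualizable_internal_hom}). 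The cost is that you redo a piece of Hansen--Scholze's argument rather than quoting it; the benefit is that the reader sees what is actually happening.
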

\begin{proof}
	As universal local acyclicity is $v$-local on the base, we may assume
	$k$ is algebraically closed.
	The conclusion is now immediate from \cite[Theorem 4.1]{HS}.
\end{proof}

\subsection{}
For \'etale sheaves, local acyclicity over a smooth curve is
tantamount to all the vanishing cycles being null. The same is true with
$\ell$-adic coefficients.
\begin{lemma*}\label{lem:ula_curve}
	Let $f:X\to Y$ be a map locally of finite type with $Y$ a regular
	1-dimensional scheme, and $\mathcal F\in D(X)$. Then $f$ is universally 
	locally acyclic rel. $\mathcal F$ if and only if all the vanishing 
	cycles are zero; i.e. if and only if $f$ is universally locally acyclic
	at every point of $|X|$ rel.\,$\mathcal F$.
\end{lemma*}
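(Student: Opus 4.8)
The plan is to reduce the statement to the two previously-established characterizations of universal local acyclicity: the local one in terms of generizations (Definition~\ref{sec:intro_ula}) and the dualizability criterion (Proposition~\ref{prop:HS4.4}). The key observation is that when $Y$ is regular of dimension $1$, its strict localizations at points are either fields (at the generic points) or strictly henselian discrete valuation rings (at closed points), so the ``generization'' appearing in the local criterion is either trivial or the passage from the closed point to the generic point of a strictly henselian dvr — and this is exactly what the vanishing cycles measure. Thus the content of the lemma is really just unwinding definitions once one knows the equivalences in Proposition~\ref{prop:HS4.4}.

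First I would handle the local-source reduction: by Proposition~\ref{prop:HS4.4}, universal local acyclicity is étale-local on the source and Zariski-local on the base, so we may assume $X$ and $Y$ are qcqs and $f$ separated, and then may pass to strict localizations. Fix a geometric point $x\to X$; the strict localization $Y_{f(x)}$ is $\Spec A$ with $A$ either a separably closed field or a strictly henselian dvr. If $A$ is a field there is nothing to check (this is Lemma~\ref{lem:ula_field} applied fiberwise, and the generic point admits no nontrivial generization). If $A$ is a strictly henselian dvr with generic point $\eta$ and closed point $s$, the only nontrivial generization of $s$ is $\eta$, so by Proposition~\ref{prop:HS4.4}\ref{4.4_ula_TFAE} (and its stability under base change, noting that base change in $S$ of a regular $1$-dimensional scheme along a morphism hitting $s$ still produces, after localization, a valuation ring or field — or more robustly, using criterion~\ref{4.4_rank1} of that proposition, which only tests rank $\le 1$ valuation rings and is automatically satisfied over a field) the condition of universal local acyclicity at $x$ is precisely that $\mathcal F_x=\Gamma(X_x,\mathcal F)\to\Gamma(X_x\times_{S_{f(x)}}\eta,\mathcal F)$ be an isomorphism. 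But as recalled in \S\ref{sec:intro_ula}, this last condition is by definition the statement that $\phi_f(\mathcal F)_x=0$: the vanishing cycles are $\cofib(i^*\mathcal F\to\psi_f(\mathcal F))$ with $\psi_f(\mathcal F)=i^*j_*(\mathcal F|_{X_\eta})$, and $\psi_f(\mathcal F)_x=\Gamma(X_x\times_{S_{f(x)}}\eta,\mathcal F)$ via the absolutely integrally closed valuation ring $V$ covering $A$ (the distinction between $V$ and the separable-closure normalization being immaterial by the footnote in \S\ref{sec:intro_ula}). So ``universally locally acyclic at every point of $|X|$'' — equivalently, by the Galois-transitivity remark in \S\ref{sec:intro_ula}, at every geometric point — is literally the condition $\phi_f(\mathcal F)=0$.

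It then remains to upgrade ``universally locally acyclic at every geometric point'' to ``universally locally acyclic'' in the sense of Proposition~\ref{prop:HS4.4}, i.e. to check that the pointwise condition, which a priori only controls generizations inside $Y$ itself, survives arbitrary base change $S'\to S$ (where $S=Y$). This is where I expect the only real subtlety to lie. The point is that an arbitrary base change $Y'\to Y$ need not be $1$-dimensional, so one cannot directly re-run the vanishing-cycles argument over $Y'$. The cleanest route is to invoke criterion~\ref{4.4_rank1} of Proposition~\ref{prop:HS4.4}: it suffices to check acyclicity after base change along $\Spec V\to Y$ for $V$ a rank-$1$ valuation ring with algebraically closed fraction field. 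Such a map factors (after replacing $V$ by a localization) through the strict localization of $Y$ at a closed point, i.e. through a strictly henselian dvr $A$, and the induced map $\Spec V\to\Spec A$ is a generization-preserving map of valuation-type rings to which the already-verified pointwise condition applies directly after the standard reduction to $V$ absolutely integrally closed. So the pointwise vanishing-cycles condition feeds exactly into criterion~\ref{4.4_rank1}, giving universal local acyclicity. Conversely, universal local acyclicity implies the pointwise condition trivially since the latter is the special case of Definition~\ref{sec:intro_ula} with $t=\eta$ and no further base change. The main obstacle, then, is organizing this rank-$1$-valuation-ring bookkeeping cleanly; everything else is definitional.
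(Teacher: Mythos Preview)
Your strategy matches the paper's: invoke Proposition~\ref{prop:HS4.4}\ref{4.4_rank1} to reduce to maps $\Spec V\to Y$ with $V$ a rank-$1$ absolutely integrally closed valuation ring, note that any such map either lands in a single point of $Y$ (handled by Lemma~\ref{lem:ula_field}) or surjects onto some $\Spec\mathcal O_{Y,y}$, and then identify the remaining condition with the vanishing of $\phi_f$. The forward direction is fine. The gap is in the converse, specifically the clause ``the already-verified pointwise condition applies directly.'' The vanishing-cycles hypothesis controls the Milnor fiber $X_x\times_A\eta$ for geometric points $x\to X$, with $\eta$ the generic point of the \emph{fixed} ring $V_y$ normalizing $A$ in $\alg{(\Frac A)}$. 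Criterion~\ref{4.4_rank1} instead demands control of $(X_V)_{x'}\times_V\Spec\Frac V$ for geometric points $x'\to X_V$. Since $A\to V$ is in general far from ind-\'etale (the residue field and value group of $V$ are arbitrary), the strict localization $(X_V)_{x'}$ need not be $X_x\times_A V$, and nothing in your argument bridges the two computations; ``generization-preserving'' and ``standard reduction to $V$ absolutely integrally closed'' do not do this work.

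The paper supplies the missing bridge via one input you did not invoke: the arc-locality of universal local acyclicity on the base (the final clause of Proposition~\ref{prop:HS4.4}). Both $\mathcal O_{Y,y}\to V$ and $\mathcal O_{Y,y}\to V_y$ are faithfully flat, hence $v$-covers, so $f_V$ is ULA over $V$ iff $f$ is ULA over $\mathcal O_{Y,y}$ iff $f_{V_y}$ is ULA over $V_y$; and over the absolutely integrally closed valuation ring $V_y$ this last condition is exactly $\phi_f(\mathcal F)=0$ by \cite[Theorem~4.1]{HS}. That $v$-descent step, not further bookkeeping, is what makes the argument go through.
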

\begin{proof}
The criterion of Theorem~\ref{prop:HS4.4}\ref{4.4_rank1} allows us to
reduce to $Y=\Spec\mathcal O_{Y,y}$. Indeed, if $g:\Spec W\to Y$ is any map
from the spectrum of a rank 1 absolutely integrally closed valuation ring,
$g$ either factors via a point of $Y$ or via a surjective map to 
$\Spec\mathcal O_{Y,y}$ for some $y\in Y$ of codimension 1.
As universal local acyclicity over a field is automatic by
Lemma~\ref{lem:ula_field}, we may assume we are in the latter case.

So let $Y=\Spec\mathcal O_{Y,y}$. Let $V_y$ denote the normalization of the 
strict henselization of $\mathcal O_{Y,y}$ (with respect to some choice of
geometric point centered on $y$) in $\alg{(\Frac Y)}$;
$V_y$ is an absolutely integrally closed valuation ring. 
The maps $\mathcal O_{Y,y}\to W$ and $\mathcal O_{Y,y}\to V_y$
are $v$-covers, so since universal local acyclicity can be checked
$v$-locally, $f_W:X\times_Y\Spec W\to\Spec W$ is universally locally 
acyclic rel. $\mathcal F$ if and only if $f_{V_y}$ is, and \cite[Theorem~4.1]{HS} 
tells us this is tantamount to the vanishing cycles (computed relative to
$V_y$) being zero.
\end{proof}

\subsection{} Let $E/\QQ_\ell$ be a finite extension.
As reduction modulo $\ell$ induces a symmetric monoidal functor
$\mathcal C_{S,\mathcal O_E}\to\mathcal C_{S,\mathcal O_E/\ell}$ in the
setting of \S\ref{sec:C_S}, it's clear that if $f:X\to S$ is a locally
finitely-presented map of schemes and $\mathcal F\in D(X,\mathcal O_E)$
is universally locally acyclic over $S$, then $\mathcal F/\ell$ is, too.
The converse holds as well.
\begin{lemma*}\label{lem:ula_mod_ell}
If $f:X\to S$ is a locally finitely-presented map of schemes and 
$\mathcal F\in D(X,\Lambda)$ with $\Lambda=\mathcal O_E$, 
$E/\QQ_\ell$ finite, then $f$ is universally locally acyclic
rel.\,$\mathcal F$ if and only if it is rel.\,$\mathcal F/\ell$.
\end{lemma*}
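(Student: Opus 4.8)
The forward implication was already observed: reduction modulo $\ell$ is a symmetric monoidal functor $\mathcal C_{S,\mathcal O_E}\to\mathcal C_{S,\mathcal O_E/\ell}$, and symmetric monoidal functors preserve dualizable objects. So the plan is to prove the converse; in fact the argument below treats both directions simultaneously and uses no reduction to the qcqs, separated case.

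I would compare the two acyclicity conditions through the pointwise criterion of Theorem~\ref{prop:HS4.4}\ref{4.4_rank1}, which holds verbatim for $\mathcal O_E/\ell$-coefficients as well (see the remark following that proposition). Given a base change $g\colon\Spec V\to S$ with $V$ a rank-$1$ valuation ring with algebraically closed fraction field $K$ and a geometric point $x\to X_V:=X\times_S\Spec V$ lying in the special fibre of $X_V\to\Spec V$, let $T_{V,x}(\mathcal G)$ denote the cofibre of the canonical map $\Gamma\big((X_V)_x,\mathcal G\big)\to\Gamma\big((X_V)_x\times_{\Spec V}\Spec K,\mathcal G\big)$; thus $f$ is universally locally acyclic relative to $\mathcal F$ (resp.\ $\mathcal F/\ell$) if and only if $T_{V,x}(\mathcal F)=0$ (resp.\ $T_{V,x}(\mathcal F/\ell)=0$) for all such $(V,x)$. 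Since $\mathcal F/\ell=\mathcal F\Lotimes_{\ZZ_\ell}\ZZ/\ell=\cofib(\ell\colon\mathcal F\to\mathcal F)$ (recall $\ZZ/\ell$ is a perfect $\ZZ_\ell$-complex, so $-\Lotimes_{\ZZ_\ell}\ZZ/\ell=\cofib(\cdot\,\ell)$) and $T_{V,x}$, being assembled from pullbacks, global sections and a cofibre, is an exact functor, one gets $T_{V,x}(\mathcal F/\ell)=\cofib\big(\ell\colon T_{V,x}(\mathcal F)\to T_{V,x}(\mathcal F)\big)=T_{V,x}(\mathcal F)/\ell$.

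The point that requires care is that $T_{V,x}(\mathcal F)$ is derived $\ell$-complete. Both of its defining terms are values of $\Gamma(W,-)$ — for $W$ one of the schemes $(X_V)_x$ or $(X_V)_x\times_{\Spec V}\Spec K$ — on the restriction of $\mathcal F$ to $W$; that restriction is constructible, since pullback of a constructible sheaf along a morphism of schemes is constructible, hence it is derived $\ell$-complete, and since $\Gamma(W,-)$ preserves limits it commutes with the defining limit $\lim_n(-)\Lotimes_{\ZZ_\ell}\ZZ/\ell^n$, so $\Gamma(W,\mathcal F)$ is again derived $\ell$-complete. A cofibre of derived $\ell$-complete $\mathcal O_E$-complexes is derived $\ell$-complete, so $T_{V,x}(\mathcal F)$ is. (Equivalently, the first term is the stalk $\mathcal F_x$, a perfect $\mathcal O_E$-complex by Lemma~\ref{lem:stalks}.) Now Lemma~\ref{lem:reduction_conservative} — reduction modulo $\ell$ is conservative on derived $\ell$-complete objects — gives $T_{V,x}(\mathcal F)=0$ if and only if $T_{V,x}(\mathcal F)/\ell=T_{V,x}(\mathcal F/\ell)=0$. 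Letting $(V,x)$ range over all admissible pairs yields the equivalence of universal local acyclicity relative to $\mathcal F$ and relative to $\mathcal F/\ell$. The only genuinely non-formal inputs are the pointwise criterion of Theorem~\ref{prop:HS4.4}, the conservativity Lemma~\ref{lem:reduction_conservative}, and the derived $\ell$-completeness just discussed; everything else is bookkeeping with exact functors and the identity $-\Lotimes_{\ZZ_\ell}\ZZ/\ell=\cofib(\cdot\,\ell)$.
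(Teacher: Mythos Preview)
Your proof is correct and follows essentially the same strategy as the paper: reduce via Proposition~\ref{prop:HS4.4}\ref{4.4_rank1} to the rank-1 valuation ring base, then use conservativity of reduction mod $\ell$ on derived $\ell$-complete objects (Lemma~\ref{lem:reduction_conservative}). The only difference is cosmetic: the paper checks that the cofibre of $\mathcal F\to j_*j^*\mathcal F$ vanishes, invoking \cite[Theorem 4.1]{HS} to get constructibility (hence $\ell$-completeness) of $j_*j^*\mathcal F$, whereas you work stalkwise with the cofibres $T_{V,x}$ and get $\ell$-completeness directly from the fact that pullback preserves constructibility and $\Gamma$ preserves limits---which lets you sidestep the citation of \cite[Theorem 4.1]{HS}.
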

\begin{proof}
	We may assume $X$ is affine and finitely-presented over $S$, also 
	affine, at which point the forward direction is a consequence of the 
	fact that reduction modulo $\ell$ induces a symmetric monoidal functor
	$\mathcal C_{S,\Lambda}\to\mathcal C_{S,\Lambda/\ell}$.
	For the converse, we may assume by the criterion of
	Proposition~\ref{prop:HS4.4}\ref{4.4_rank1} that $S$ is a rank~1
	absolutely integrally closed valuation ring, and it suffices to
	know that $\mathcal F\to j_*j^*\mathcal F$ is an isomorphism, where $j$
	is the (base extension of the) immersion of the generic point of $S$.
	\comment{We may approximate $\mathcal F$ by a sheaf
	$\mathcal F'\in D(X,\mathcal O_E)$ with $E/\QQ_\ell$ finite, and the
	vanishing of the cone of the map
	$\mathcal F/\ell\to j_*j^*(\mathcal F/\ell)$ implies the same with
	$\mathcal F$ replaced by $\mathcal F'$.\footnote{Say $\mathcal F$ is an
	$\mathcal O_L$-module with $L/\QQ_\ell$ infinite. As the cone
	$\mathcal G:=\cofib(\mathcal F'/\ell\to j_*j^*(\mathcal F'/\ell))$
	is an \'etale sheaf, it will have a nonzero stalk if it's nonzero, say
	at a geometric point $x\to X$. So $\Gamma(X_x,\mathcal G)\ne0$.
	Then $\Gamma(X_x,\mathcal G\otimes_{\mathcal O_E}\mathcal O_L)
	=\Gamma(X_x,\mathcal G)\otimes_{\mathcal O_E}\mathcal O_L\ne0$ as
	$\mathcal O_E\subset\mathcal O_L$ is faithfully flat, so
	$0\ne\mathcal G\otimes_{\mathcal O_E}\mathcal O_L
	=\cofib(\mathcal F/\ell\to j_*j^*(\mathcal F/\ell))$.}}
	Here, $j_*j^*\mathcal F$ is constructible by \cite[Theorem 4.1]{HS},
	\comment{So we may replace $\mathcal F$ by $\mathcal F'$ at which point}
	so both $\mathcal F$ and $j_*j^*\mathcal F$ are derived
	$\ell$-complete. It therefore suffices by Lemma~\ref{sec:cons_facts} to
	know that
	$\mathcal F/\ell\to(j_*j^*\mathcal F)\otimes_{\ZZ_\ell}\ZZ_\ell/\ell
	=j_*j^*(\mathcal F/\ell)$ is an isomorphism (the latter isomorphism as
	$\ZZ_\ell/\ell$ is perfect). This is guaranteed by the hypothesis that
	$f$ is universally locally acyclic rel.\,$\mathcal F/\ell$.
\end{proof}

\subsection{}
Similarly, universal local acyclicity can be checked after extending 
scalars.
\begin{lemma*}\label{lem:integral_extension_ula}
	Let $f:X\to S$ be a locally finitely-presented morphism of schemes.
	Let $\QQ_\ell\subset E\subset L$ with $E/\QQ_\ell$ finite and $L/E$ 
	algebraic, and let $\mathcal G'\in D(X,\mathcal O_E)$.
	Then $\mathcal G'$ is universally locally acyclic over $S$ if and only
	if $\mathcal G:=\mathcal G'\otimes_{\mathcal O_E}\mathcal O_L$ is.
\end{lemma*}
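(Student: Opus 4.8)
The plan is to use the valuation-ring criterion of Proposition~\ref{prop:HS4.4}\ref{4.4_rank1} to rewrite both conditions in question as the vanishing of one and the same family of cones, and then to conclude by faithful flatness of $\mathcal O_E\to\mathcal O_L$. By that criterion, a sheaf $\mathcal H$ in $D(X,\mathcal O_E)$ (resp.\ in $D(X,\mathcal O_L)$) is universally locally acyclic over $S$ if and only if, for every morphism $\Spec V\to S$ with $V$ a rank~$1$ valuation ring with algebraically closed fraction field $K$ and every geometric point $x\to X_V:=X\times_S\Spec V$ lying in the special fiber, the pullback map
\begin{equation*}
	u_{V,x}(\mathcal H):\Gamma\big((X_V)_x,\mathcal H\big)\lra\Gamma\big((X_V)_x\times_{\Spec V}\Spec K,\mathcal H\big)
\end{equation*}
is an isomorphism (here I also write $\mathcal H$ for the pullback of $\mathcal H$ to $X_V$, and $(X_V)_x$ for the strict localization of $X_V$ at $x$). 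Since $\mathcal G=\mathcal G'\otimes_{\mathcal O_E}\mathcal O_L$ lives on the \emph{same} scheme $X$, the index set of pairs $(V,x)$ is identical for $\mathcal G'$ and for $\mathcal G$, so it suffices to prove that, for each such $(V,x)$, the map $u_{V,x}(\mathcal G')$ is an isomorphism if and only if $u_{V,x}(\mathcal G)$ is.

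The heart of the matter is the claim that $u_{V,x}(\mathcal G)$ is obtained from $u_{V,x}(\mathcal G')$ by applying $-\otimes_{\mathcal O_E}\mathcal O_L$. Granting it, observe that $\mathcal O_E\to\mathcal O_L$ is faithfully flat: it is free of finite rank, hence faithfully flat over the local ring $\mathcal O_E$, when $L/E$ is finite, and a filtered colimit of such extensions when $L/E$ is infinite algebraic. Consequently a map of complexes of $\mathcal O_E$-modules is a quasi-isomorphism if and only if its extension of scalars to $\mathcal O_L$ is one (pass to cohomology and use that $-\otimes_{\mathcal O_E}\mathcal O_L$ is exact and conservative). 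This yields exactly the equivalence ``$u_{V,x}(\mathcal G')$ is an isomorphism $\iff$ $u_{V,x}(\mathcal G)$ is'' --- the forward implication of the lemma using only flatness, the converse using faithfulness --- and hence the lemma.

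It remains to justify the claim. Both $(X_V)_x$ (the spectrum of a strictly henselian local ring) and its base change $(X_V)_x\times_{\Spec V}\Spec K$ are affine, hence qcqs, the map $u_{V,x}$ is natural in the sheaf argument, and pullback manifestly commutes with extension of scalars; so it is enough to check that, for $T$ a qcqs scheme, $\Gamma(T,-)$ commutes with $-\otimes_{\mathcal O_E}\mathcal O_L$ on bounded-below complexes of sheaves of $\mathcal O_E$-modules. When $L/E$ is finite this is immediate: $\mathcal O_L$ is free of finite rank over $\mathcal O_E$, so $\mathcal G'\otimes_{\mathcal O_E}\mathcal O_L$ is, as a sheaf of $\mathcal O_E$-modules, a finite direct sum of copies of $\mathcal G'$, and $\Gamma(T,-)$ commutes with finite direct sums. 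When $L/E$ is infinite algebraic, $\mathcal O_L=\colim_{L'}\mathcal O_{L'}$ over the finite subextensions $E\subset L'\subset L$, so $\mathcal G'\otimes_{\mathcal O_E}\mathcal O_L=\colim_{L'}\big(\mathcal G'\otimes_{\mathcal O_E}\mathcal O_{L'}\big)$ is a filtered colimit of complexes of uniformly bounded amplitude --- that of the constructible, hence bounded, complex $\mathcal G'$ --- and $\Gamma(T,-)$ commutes with such filtered colimits on a qcqs scheme.

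I do not expect a genuine obstacle: the argument runs in the spirit of the proof of Lemma~\ref{lem:ula_mod_ell}, with faithful flatness of $\mathcal O_E\to\mathcal O_L$ playing the role that conservativity of reduction modulo $\ell$ played there. The only point meriting a little care is the infinite-extension bookkeeping in the last step --- that Milnor-fiber cohomology commutes with the filtered colimit defining $\mathcal O_L$ --- and this is routine once one notes that $\mathcal G'$ is bounded and that the schemes involved are qcqs (indeed affine).
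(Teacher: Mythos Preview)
Your argument is correct and follows the same strategy as the paper's own proof: reduce via Proposition~\ref{prop:HS4.4}\ref{4.4_rank1} to a rank-$1$ absolutely integrally closed valuation ring and conclude by faithful flatness of $\mathcal O_E\to\mathcal O_L$. The paper packages the converse slightly differently---it works with the constructible cone $\mathcal K'=\cofib(\mathcal G'\to j_*j^*\mathcal G')$ on $X_V$ and checks its vanishing on stalks---but the content is the same.

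One word of caution on your last paragraph: the assertion that ``$\Gamma(T,-)$ commutes with filtered colimits on a qcqs scheme'' is an \'etale-cohomology fact and is not a general truth for pro-\'etale cohomology. The commutation you actually need is correct, but a safer justification is to rewrite $\Gamma\big((X_V)_x\times_V\Spec K,\mathcal G'\big)=(j_*j^*\mathcal G')_x$ as the stalk at $x$ of a constructible sheaf (constructibility of $j_*j^*\mathcal G'$ is \cite[Theorem~4.1]{HS}) and then observe that pullback to a geometric point, hence the stalk functor on constructible sheaves, visibly commutes with $-\otimes_{\mathcal O_E}\mathcal O_L$; this is exactly how the paper handles it.
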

\begin{proof}
We may assume $X$ and $S$ are qcqs and that $f$ is separated.
As extension of scalars from $\mathcal O_E$ to $\mathcal O_L$ induces
a symmetric monoidal functor
$\mathcal C_{S,\mathcal O_E}\to\mathcal C_{S,\mathcal O_L}$, the forward
direction is obvious. For the converse, suppose $\mathcal G$ is universally
locally acyclic over $S$. We may suppose by
Proposition~\ref{prop:HS4.4}\ref{4.4_rank1} that $S$ is a rank 1 absolutely
integrally closed valuation ring and that $\mathcal G\to j_*j^*\mathcal G$
is an isomorphism, where $j$ is the (base change of the) immersion of the
generic point of $S$, and we must show that the same is true for
$\mathcal G'$.
As the cone $\mathcal K':=\cofib(\mathcal G'\to j_*j^*\mathcal G')$ is
a constructible $\mathcal O_E$-sheaf by \cite[Theorem 4.1]{HS}, 
Lemma~\ref{lem:stalks} guarantees it will have a nonzero stalk if it's
nonzero, say at a geometric point $x\to X$.
As $\mathcal O_E\subset\mathcal O_L$ is faithfully flat and
$\mathcal K_x
=\Gamma(X_x,\mathcal K:=\mathcal K'\otimes_{\mathcal O_E}\mathcal O_L)
=\Gamma(X_x,\mathcal K')\otimes_{\mathcal O_E}\mathcal O_L
=\mathcal K'_x\otimes_{\mathcal O_E}\mathcal O_L$,
$\mathcal K'_x=0$ if and only if $\mathcal K_x=0$.
As $\mathcal K=\cofib(\mathcal G\to j_*j^*\mathcal G)=0$ by assumption,
we may conclude.
\end{proof}

\subsection{}
It follows from the definition of duals in the category $\mathcal C_S$
and Proposition~\ref{prop:duals_in_C_S}
that the property of being universally locally acyclic often commutes with
Verdier duality when the base is a smooth variety.
\begin{lemma*}\label{lem:verdier_duality}
	Let $f:X\to S$ be a map of (not necessarily separated) varieties with
	$S$ smooth, and assume $f$ separated or $X$ smooth.
	Let $\mathcal F\in D(X)$.
	Then $f$ is universally locally acyclic rel.\,$\mathcal F$ if and only
	if it is rel.\,$D\mathcal F$, where $D$ denotes Verdier duality.
\end{lemma*}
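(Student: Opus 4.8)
The plan is to identify the absolute Verdier dual $D\mathcal F$ with a Tate twist and shift of the \emph{relative} dual $\mathbf{D}_{X/S}(\mathcal F)$, and then to invoke Proposition~\ref{prop:duals_in_C_S}, which says that when $(X,\mathcal F)$ is dualizable in $\mathcal C_S$ — equivalently, by \S\ref{sec:def_local_acyclicity} and Proposition~\ref{prop:HS4.4}, when $\mathcal F$ is universally locally acyclic over $S$ — its dual in $\mathcal C_S$ is $(X,\mathbf{D}_{X/S}(\mathcal F))$. Before doing this I would make two reductions. Since $\mathcal F\mapsto D\mathcal F$ commutes with restriction to open subschemes and universal local acyclicity is Zariski-local on the source (Proposition~\ref{prop:HS4.4}), I may cover $X$ by affine opens and assume $X$ is affine; then $X\to S$ is automatically separated (a morphism out of a separated scheme is separated), and $X$, $S$ are Noetherian hence qcqs, so the machinery of \S\S\ref{sec:C_S}--\ref{sec:def_local_acyclicity} applies. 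Likewise universal local acyclicity over $S$ may be checked on each connected component of $S$, so I may assume $S$ is connected; being smooth it is then equidimensional of some dimension $d$, and $K_S:=a_S^!\Lambda=\Lambda(d)[2d]$, where $a_S\colon S\to\Spec k$.

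Next comes the key computation. Since $a_X=a_S\circ f$, one has $K_X:=a_X^!\Lambda=f^!a_S^!\Lambda=f^!\bigl(\Lambda(d)[2d]\bigr)=(f^!\Lambda)(d)[2d]$, the last equality by the projection formula for $f^!$ against the lisse sheaf $\Lambda(d)$, together with the shift. Hence
\[
D\mathcal F=\sHom(\mathcal F,K_X)=\sHom\bigl(\mathcal F,(f^!\Lambda)(d)[2d]\bigr)=\mathbf{D}_{X/S}(\mathcal F)(d)[2d].
\]
A shift plainly preserves universal local acyclicity, as Definition~\ref{sec:intro_ula} is unaffected by shifting $\mathcal F$; and so does a Tate twist, because on the strict localization $X_x$ — and on the Milnor tubes $X_x\times_{S_{f(x)}}t$, to which $\Lambda(1)$ pulls back from $X_x$ — the lisse sheaf $\Lambda(1)$ is constant with value a free rank-one $\Lambda$-module, so by the projection formula $\Gamma(-,\mathcal F(1))=\Gamma(-,\mathcal F)\otimes_\Lambda\Lambda(1)_{\overline x}$ compatibly with the comparison map of Definition~\ref{sec:intro_ula}, and this persists under base change in $S$. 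Therefore $D\mathcal F$ is universally locally acyclic over $S$ whenever $\mathbf{D}_{X/S}(\mathcal F)$ is.

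To finish, I would argue as follows. If $\mathcal F$ is universally locally acyclic over $S$, then $(X,\mathcal F)$ is dualizable in $\mathcal C_S$, so by Proposition~\ref{prop:duals_in_C_S} its dual is $(X,\mathbf{D}_{X/S}(\mathcal F))$; as the dual of a dualizable object of a symmetric monoidal $2$-category is dualizable (\S\ref{sec:duals_in_2cats}), $\mathbf{D}_{X/S}(\mathcal F)$ is universally locally acyclic over $S$, hence so is $D\mathcal F=\mathbf{D}_{X/S}(\mathcal F)(d)[2d]$. This implication holds for every $\mathcal F\in D(X)$ and every $f$ as in the statement, so applying it with $\mathcal F$ replaced by $D\mathcal F$ gives the converse: if $D\mathcal F$ is universally locally acyclic over $S$, then so is $D(D\mathcal F)\simeq\mathcal F$, where we use biduality on the variety $X$ — which in the $\ell$-adic setting follows from the étale torsion case by passing to an integral model and applying Lemma~\ref{lem:reduction_conservative}.

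The only substantive ingredient is Proposition~\ref{prop:duals_in_C_S}; everything else is bookkeeping. The points that need the most care are the invertibility of $K_S$ — this is exactly where smoothness of $S$ enters, and it is genuinely needed, since for singular $S$ the non-lisse twisting sheaf $f^*K_S$ would not preserve universal local acyclicity — and the $\ell$-adic biduality statement, but I do not anticipate any real obstacle in either.
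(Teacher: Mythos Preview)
Your proof is correct and follows essentially the same approach as the paper: reduce to $X$ affine (hence $f$ separated), identify $D\mathcal F$ with $\mathbf D_{X/S}(\mathcal F)(d)[2d]$ using $a_S^!\Lambda=\Lambda(d)[2d]$, and conclude via the dualizability of $(X,\mathcal F)$ in $\mathcal C_S$. The paper treats the two hypotheses (``$f$ separated'' vs.\ ``$X$ smooth'') as separate cases, but the second reduces to the first by passing to affines exactly as you do, so the arguments are the same in substance.
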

\begin{proof}
	We may assume $X$ and $S$ connected of dimension $r$ and $d$, 
	respectively.
	Let $a,b:S,X\to\Spec k$ be the structural maps. Then $a^!\Lambda$ is
	defined, equals $\Lambda(d)[2d]$ and is a dualizing complex on $S$.
	Therefore if $f$ is separated,
	$D\mathcal F:=\sHom(\mathcal F,b^!\Lambda=f^!a^!\Lambda)$ is given by 
	$\sHom(\mathcal F,f^!\Lambda)(d)[2d]
	=\mathbf D_{X/S}(\mathcal F)(d)[2d]$,
	which is universally locally acyclic over $S$ if and only if 
	$\mathcal F$ is.\footnote{With these hypotheses, $f^!\Lambda$ and
	$D\mathcal F$ are constructible \cite[Lemmas 6.7.13 \& 6.7.19]{BS}.}
	
	If $X$ is smooth, then $b^!\Lambda$ is defined, equals $\Lambda(r)[2r]$
	and is a dualizing complex on $X$.
	For the purposes of checking universal local acyclicity, we may assume
	$X$ and $S$ are affine and then run the previous argument.
	\comment{(The check that the biduality morphism is an isomorphism is 
	Zariski-local, which is the same as the check that 
	$b^!\Lambda=f^!a^!\Lambda$ is a dualizing complex on $X$. The point is
	that we know that it is Zariski-locally, and that's enough.
	To say that $a^!\Lambda$ or $b^!\Lambda$ `are defined' doesn't mean
	that I claim that $a_!$ or $b_!$ are. We know that $a^!$ and $b^!$
	are defined Zariski-locally on $S$ or $X$, respectively, and if $S$ or
	$X$ is smooth then we know that these sheaves glue to a global sheaf on
	$S$ or $X$ which is therefore a dualizing complex.
	The isomorphism $\sHom(\mathcal F,f^!\Lambda)(d)[2d]
	=\mathbf D_{X/S}(\mathcal F)(d)[2d]$ can be checked on stalks using
	Lemma~\ref{sec:cons_facts}. The definition of universal local acyclicity
	shows that $\mathbf D_{X/S}(\mathcal F)(d)[2d]$ is universally
	locally acyclic if and only if $\mathbf D_{X/S}(\mathcal F)$ is, and then we use
	the biduality isomorphism
	$D(\mathbf D_{X/S}(\mathcal F)(d)[2d])=DD\mathcal F=\mathcal F$.)}
\end{proof}

\subsection{} The following result was shown by Illusie 
\cite[Proposition 2.10]{Illusie} with torsion coefficients under the
hypothesis of strong local acyclicity and with $g$ an open immersion. 
It holds most generally for \'etale sheaves, so let $\Lambda'$ denote some
discrete ring killed by a power of $\ell$ and denote by 
$\mathcal D(\et X,\Lambda')$ the left-completion
of the derived $\infty$-category of $\Lambda'$-modules on a scheme $X$ with
homotopy category $D(\et X,\Lambda')$. Let 
$\mathcal D_{\mathrm{cons}}(\et X,\Lambda')
\subset\mathcal D(\et X,\Lambda')$
denote the full subcategory on those objects which are Zariski-locally
lisse (i.e. dualizable in $\mathcal D(\et X,\Lambda')$) along a finite
subdivision into constructible locally closed subschemes
(see \cite[\S7.1]{HRS}).
\begin{proposition*}\label{prop:illusie}
	Let $f,g:X,Y\to S$ be morphisms locally of finite presentation between
	schemes $X$, $Y$ and $S$ and let 
	$\mathcal F\in D_{\mathrm{cons}}(\et X,\Lambda')$ 
	and $\mathcal G\in D(\et Y,\Lambda')$. Suppose $f$ is universally
	locally acyclic relative to $\mathcal F$. Then the natural map
	$\mathcal F\otimes f^*g_*\mathcal G\to 
	g_*(g^*\mathcal F\otimes f^*\mathcal G)$
	is an isomorphism, where $f$ and $g$ continue to denote their base
	extensions $X\times_SY\to Y,X$.
	
	The same holds for $\ell$-adic sheaves $\mathcal F\in D(X)$ and
	$\mathcal G\in D(Y)$ if $Y$ and $S$ are supposed qcqs and either $S$
	noetherian quasi-excellent or $g$ proper.
\end{proposition*}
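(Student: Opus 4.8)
The plan is to reduce the $\ell$-adic assertion to the \'etale one, which is the first part of the proposition, by the standard descent of coefficients --- first from rational to integral, then from integral to torsion via reduction modulo $\ell$.

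Since the assertion is Zariski-local on the common target $X$ of the two sheaves appearing in the natural map, I may assume $X$, and hence $X\times_SY$, is qcqs. Under the running hypotheses --- $g$ proper, or $S$ noetherian quasi-excellent, so that $X$ and $X\times_SY$ are noetherian quasi-excellent and the base extension of $g$ is of finite type --- the finiteness theorems of Deligne (for proper maps) and of Gabber (for finite-type maps over noetherian quasi-excellent bases), in the $\ell$-adic form of \cite[\S6.7]{BS}, show that $g_*\mathcal G$ and $g_*(g^*\mathcal F\otimes f^*\mathcal G)$ are constructible, so that the natural map is a morphism in $\mathcal D(X)$. The same references give that, when $\Lambda=\mathcal O_E$ with $E/\QQ_\ell$ finite, the formation of these pushforwards commutes with $-\otimes_{\ZZ_\ell}\ZZ/\ell$, and that for rational $\Lambda$ it commutes with rationalization ($g_*$ commuting with filtered colimits on $\mathcal D^{\geq0}$, cf.\ \S\ref{sec:C_S}); pullback and tensor commute with both operations for formal reasons.

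For the coefficient reduction I argue as follows. If $\Lambda=\mathcal O_L$ with $L/\QQ_\ell$ infinite algebraic, I descend $\mathcal F$ and $\mathcal G$ to a common finite $E\subset L$, use \cite[Lemma 5.3]{HRS} for the compatibility of $g_*$ with $-\otimes_{\mathcal O_E}\mathcal O_L$ and Lemma~\ref{lem:integral_extension_ula} for universal local acyclicity, reducing to $\Lambda=\mathcal O_E$. If $\Lambda=E$ is rational, I fix an integral model $\mathcal G_0$ of $\mathcal G$ and invoke \cite[Proposition 3.8]{HS}: there is a $v$-cover $S'\to S$ and an integral model $\widetilde{\mathcal F}_0$ of $\mathcal F|_{X\times_SS'}$ that is universally locally acyclic over $S'$. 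Since $\mathcal D(-)$ is an arc-sheaf (\S\ref{sec:cons_facts}) and $X\times_SS'\to X$ is an arc-cover, and since the natural map is compatible with base change in $S$, it suffices to prove the isomorphism over $S'$ for $\widetilde{\mathcal F}_0[\ell^{-1}]$ and $\mathcal G_0[\ell^{-1}]$, which by the compatibility with rationalization reduces to the integral statement for $\widetilde{\mathcal F}_0$ and $\mathcal G_0|_{Y\times_SS'}$.

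It remains to treat $\Lambda=\mathcal O_E$, $E/\QQ_\ell$ finite, with $f$ universally locally acyclic rel.\,$\mathcal F$. Both sides of the natural map are constructible, hence derived $\ell$-complete, so by Lemma~\ref{lem:reduction_conservative} the map is an isomorphism if and only if its reduction modulo $\ell$ is. By the compatibilities above, that reduction is exactly the natural map of the proposition attached to the \'etale sheaves $\mathcal F/\ell$ and $\mathcal G/\ell$ over the same $f$ and $g$, and $f$ is universally locally acyclic rel.\,$\mathcal F/\ell$ by Lemma~\ref{lem:ula_mod_ell}; the first part of the proposition --- which carries no finiteness hypothesis --- then applies. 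The step I expect to require the most care is the constructibility of $g_*\mathcal G$ and $g_*(g^*\mathcal F\otimes f^*\mathcal G)$, which is exactly what forces the hypotheses on $g$ and $S$, together with the attendant compatibilities of $g_*$ with reduction modulo $\ell$, with rationalization, and with base change in $S$; this last is where the quasi-excellent, non-proper case needs extra care, since $g_*$ then fails to commute with arbitrary base change and one must instead test the vanishing of the cone on stalks (Lemma~\ref{lem:stalks}), where a stalk of $g_*$ is a cohomology group over a strict localization of the base and base change is unproblematic. Granting these, the argument is a transcription of the torsion case.
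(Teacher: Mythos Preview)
You do not prove the \'etale assertion at all; you treat it as input. But that is the substantive half of the proposition---it is strictly more general than Illusie's original (which had $g$ an open immersion and assumed strong local acyclicity), and the paper supplies a proof from the dualizable-object formalism:
\[
\mathcal F\otimes f^*g_*\mathcal G
=\mathbf D_{X/S}^2\mathcal F\boxtimes_Sg_*\mathcal G
=\sHom_X(\mathbf D_{X/S}\mathcal F,f^!g_*\mathcal G)
=g_*\sHom(g^*\mathbf D_{X/S}\mathcal F,f^!\mathcal G)
=g_*(\mathbf D_{X/S}^2\mathcal F\boxtimes_S\mathcal G),
\]
invoking \cite[Proposition~3.4(iv)]{HS} for the second and fourth equalities.

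For the $\ell$-adic assertion your integral reduction (mod~$\ell$ plus Lemma~\ref{lem:reduction_conservative}) is exactly what the paper does. In the rational case, however, the paper does \emph{not} descend to a ULA integral model over a $v$-cover; it simply reruns the displayed chain with \cite[Proposition~3.4(iv)]{HS} replaced by Proposition~\ref{prop:HS_3.3_dualizable_internal_hom}. This bypasses the very obstacle you flag: your arc-descent step needs the pullback of $g_*\mathcal G$ along $X\times_SS'\to X$ to agree with the pushforward over $S'$, i.e.\ base change for $g_*$ along an arbitrary $v$-cover, which is unavailable when $g$ is not proper. Your fallback (``test the cone on stalks'') does not rescue the argument, because the ULA integral model $\widetilde{\mathcal F}_0$ you need for the rationalization step only exists over $S'$; on $X$ itself there is no integral model to which the already-established integral case applies. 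So in the non-proper quasi-excellent case your reduction of the rational statement to the integral one has a genuine gap.
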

The latter hypotheses ensure that $g_*$ preserves constructibility
\cite[Lemma 6.7.2]{BS}.
Note that if $X$ and $S$ are qcqs and $f$ is separated, then an \'etale
sheaf which is $f$-universally locally acyclic is necessarily already
constructible \cite[Proposition 3.4(iii)]{HS}.
Note also that the proposition implies the version with $D(\et Y,\Lambda')$
replaced by its ordinary non-left-completed version provided that 
$\mathcal G$ is bounded below (i.e. in $D^+$), and that left-completion is
superfluous if $\et X$ has locally finite $\ell$-cohomological dimension by 
\cite[Proposition 3.3.7(2)]{BS}.
\begin{proof}
	Let's first consider the case of \'etale sheaves.
	We may assume $X$ and $S$ are affine.
	The object $(X,\mathcal F)$ is then dualizable in the version of the 
	2-category $\mathcal C_S$ where $D(?)$ is replaced by 
	$D(\et ?,\Lambda')$; i.e. setting (A) of \cite{HS}. We write
	\begin{align*}
		\mathcal F\otimes f^*g_*\mathcal G
		&=\mathcal F\boxtimes_S g_*\mathcal G
		=\mathbf D_{X/S}\mathbf D_{X/S}\mathcal F\boxtimes_Sg_*\mathcal G \\
		&=\sHom_X(\mathbf D_{X/S}\mathcal F,f^!g_*\mathcal G)
		=g_*\sHom_{X\times_SY}(g^*\mathbf D_{X/S}\mathcal F,f^!\mathcal G) \\
		&=g_*(\mathbf D_{X/S}\mathbf D_{X/S}\mathcal F\boxtimes_S\mathcal G)
		=g_*(g^*\mathcal F\otimes f^*\mathcal G).
	\end{align*}
	That this morphism coincides with the map in the proposition follows
	from the fact that the adjoints of both maps are induced by the counit 
	$g^*g_*\to\id$.
	Here, the third isomorphism follows from \cite[Proposition 3.4(iv)]{HS}.
	We may check the isomorphism
	$\mathbf D_{X/S}\mathbf D_{X/S}\mathcal F\boxtimes_S\mathcal G
	\to\sHom(g^*\mathbf D_{X/S}\mathcal F,f^!\mathcal G)$
	locally on $X\times_SY$ and assume that $Y$
	is also affine, at which point it follows from \emph{loc. cit.}
	
	The case of $\ell$-adic sheaves immediately reduces to the case of a
	finite extension of $\QQ_\ell$. Then, with integral coefficients, it 
	follows directly from the torsion \'etale case by reducing mod $\ell$
	and using Lemma~\ref{lem:reduction_conservative}.
	The case of rational coefficients can be handled by the same argument as
	in the torsion \'etale case, with the reference to
	\cite[Proposition 3.4(iv)]{HS} being replaced by one to 
	Proposition~\ref{prop:HS_3.3_dualizable_internal_hom}
	(so that $f^!\mathcal G$ means 
	$(\lim_nf^!(\mathcal G_0/\ell^n))[\ell^{-1}]$,
	where $\mathcal G_0$ is some integral model for $\mathcal G$).
	\comment{
	Let $\mathcal G_n:=\mathcal G_0/\ell^n$, let $\mathcal K_0$
	be any integral model for the constructible sheaf 
	$\mathbf D_{X/S}\mathcal F$, and let 
	$\mathcal K_n:=\mathcal K_0/\ell^n$.
	\begin{align*}
		\mathcal F\otimes f^*g_*\mathcal G
		&=\mathcal F\boxtimes_S g_*\mathcal G
		=\mathbf D_{X/S}\mathbf D_{X/S}\mathcal F\boxtimes_Sg_*\mathcal G \\
		&=\sHom_X(\mathbf D_{X/S}\mathcal F,(\lim_nf^!g_*\mathcal G_n)[\ell^{-1}]) \\
		&=\colim_\ell\lim_n g_*\sHom_{X\times_SY}(g^*\mathcal K_n,f^!\mathcal G_n) \\
		&=g_*\sHom_{X\times_SY}(g^*\mathbf D_{X/S}\mathcal F,(\lim_n f^!\mathcal G_n)[\ell^{-1}]) \\
		&=g_*(\mathbf D_{X/S}\mathbf D_{X/S}\mathcal F\boxtimes_S\mathcal G)
		=g_*(g^*\mathcal F\otimes f^*\mathcal G).
	\end{align*}
	Note: the natural map
	$\mathbf D_{X/S}\mathcal F\boxtimes_S\mathcal G
	\to\sHom(g^*\mathcal F,f^!\mathcal G)$
	is obtained via adjunction from the map
	$f_!(f^*\mathcal G\otimes g^*(\mathbf D_{X/S}\mathcal F\otimes\mathcal F))
	=\mathcal G\otimes g^*f_!(\sHom(\mathcal F,f^!\Lambda)\otimes\mathcal F)\to\mathcal G$
	which is induced by
	$\sHom(\mathcal F,f^!\Lambda)\otimes\mathcal F\to f^!\Lambda$
	followed by the counit of adjunction $f_!f^!\Lambda\to\Lambda$.
	With integral $\ell$-adic coefficients, one defines the map using
	limits:
	$\Hom(f^*\mathcal G_0\otimes g^*(\mathbf D_{X/S}\mathcal F_0\otimes\mathcal F_0),\lim_nf^!\mathcal G_n)
	=\lim_n\Hom(f^*\mathcal G_n\otimes g^*(\mathbf D_{X/S}\mathcal F_n\otimes\mathcal F_n),f^!\mathcal G_n)$.
	This uses crucially that $(\mathbf D_{X/S}\mathcal F_0)/\ell^n
	=\mathbf D_{X/S}\mathcal F_n$ as $-\otimes_{\ZZ_\ell}\ZZ_\ell/\ell^n$
	preserves duals.
	With rational coefficients, observe that
	$\Hom(f^*\mathcal G\otimes g^*(\mathbf D_{X/S}\mathcal F\otimes\mathcal F),(\lim_nf^!\mathcal G_n)[\ell^{-1}])
	=\colim_\ell\lim_n\Hom(f^*\mathcal G_n\otimes g^*(\mathcal K_n\otimes\mathcal F_n),f^!\mathcal G_n)
	=\colim_\ell\lim_n\Hom(f_!(f^*\mathcal G_n\otimes g^*(\mathcal K_n\otimes\mathcal F_n),\mathcal G_n)
	=\Hom(f_!(f^*\mathcal G\otimes g^*(\mathbf D_{X/S}\mathcal F\otimes\mathcal F)),\mathcal G)
	=\Hom(\mathcal G\otimes g^*f_!(\mathbf D_{X/S}\mathcal F\otimes\mathcal F)),\mathcal G)$
	and there is a natural map 
	$\sHom_\proet(\mathcal F,(f^!\Lambda_n)[\ell^{-1}])\otimes\mathcal F\to
	(f^!\Lambda_n)[\ell^{-1}]$.
	Finally, $f_!$ commutes with colimits and limits (follows from the
	case of an open immersion \cite[Lemma 6.1.13]{BS}), so the counit
	$f_!f^!\Lambda_n\to\Lambda_n$ finishes the job.
	}
\end{proof}
\begin{remark*}
	One recovers smooth base change as a special case of the previous
	proposition. Indeed, suppose $f$ is smooth and set $\mathcal F=\Lambda$.
	Then $f$ is universally locally acyclic rel.\,$\mathcal F$ 
	by Lemma~\ref{lem:id_lisse}\ref{lem:id_lisse:smooth}, and
	the proposition says that $f^*g_*\mathcal G\to g_*f^*\mathcal G$ is an
	isomorphism.
\end{remark*}

\section{Singular support}\label{sec:SS}
In this section, we construct the singular support of an $\ell$-adic sheaf
on a smooth variety and prove Theorem~\ref{th:SS}. We start with the case
of an integral sheaf, as it quickly reduces to the torsion \'etale case.
We use without further ado the definitions of
\S\ref{sec:transversality_def} \& \S\ref{sec:ss_def}.

\subsection{}\label{sec:integral_SS}
Suppose $X$ is a smooth variety, $\Lambda=\mathcal O_E$ with
$E/\QQ_\ell$ a finite extension, and $\mathcal F\in D(X,\Lambda)$.
Then $\mathcal F/\ell$ is an \'etale $\ZZ/\ell$-sheaf, and has a notion of
singular support.
It follows from Lemma~\ref{lem:ula_mod_ell} that a test pair is
$\mathcal F$-acyclic if and only if it is $\mathcal F/\ell$-acyclic.
Therefore $SS(\mathcal F)$ exists and equals $SS(\mathcal F/\ell)$.
For the same reason (or by Lemma~\ref{lem:vanishing_cycles_mod_ell}), we
have equality of the weak singular supports.
The equality $SS(\mathcal F)=SS(D\mathcal F)$ follows from
Lemma~\ref{lem:verdier_duality} in view of the equality 
$SS(\mathcal F)=SS^w(\mathcal F)$.\footnote{The corresponding facts for
\'etale sheaves appear as \cite[Corollary 4.5(4) \& Corollary 4.9]{Saito}.}
The rest of Theorem~\ref{th:SS} for 
$\mathcal F$ follows from the corresponding facts for $\mathcal F/\ell$.

Now suppose $\Lambda=\mathcal O_L$ with $L$ an infinite algebraic extension
of $\QQ_\ell$. Then we can approximate $\mathcal F$ by some
$\mathcal F'\in D(X,\mathcal O_E)$, $E/\QQ_\ell$ finite, and
$\mathcal F'$ has singular support satisfying Theorem~\ref{th:SS}.
We are done by Lemma~\ref{lem:integral_extension_ula}.

This completes the proof of Theorem~\ref{th:SS} for integral $\Lambda$,
modulo the first sentence of Theorem~\ref{th:SS}\ref{th:SS:constituents}
which will be addressed in \S\ref{sec:SSw} below.

\subsection{}
We turn now to the case of rational $\Lambda$.
Here, picking an integral model and computing its singular 
support will generally give the wrong answer, as is seen by taking the
direct sum of a torsion-free $\ZZ_\ell$-local system with a torsion \'etale
constructible sheaf which is not lisse: the rationalization should have
singular support given by the zero section only, but the singular support
of such an integral model will have other irreducible components.
To remedy this, we analyze Beilinson's constructions in \cite{Sasha},
elaborating on some of them, and explain why they continue to work with
rational $\ell$-adic coefficients.

Let $X$ be a smooth variety and $\mathcal F$ in $D(X)$.
Let $\mathcal C(\mathcal F)$ denote the set of those conical closed subsets 
$C$ of $T^*X$ so that $\mathcal F$ is micro-supported on $C$.
The proof that $T^*X\in\mathcal C(\mathcal F)$ is unchanged from that of
\cite[Lemma 1.3]{Sasha}, and uses Lemma~\ref{lem:ula_field}, the
stability of universal local acyclicity under base change, and
Lemma~\ref{lem:stabilities}\ref{lem:stabilities:smooth}.

\subsubsection*{Legendre \& Radon transforms}
We now recall the definitions of the Legendre and Radon transforms,
referring the reader to \cite[\S1.6]{Sasha} for more detail.
Let $V$ be a vector space of dimension $n+1$, $V^\vee$ its dual,
$\PP$ and $\PP^\vee$ the corresponding projective spaces,
$Q\hookrightarrow\PP\times\PP^\vee$ the incidence correspondence,
$p,p^\vee:Q\to\PP,\PP^\vee$ the projections.

The \emph{Legendre transform} is a pair of identifications
$P(T^*\PP)\xleftarrow\sim Q\xrightarrow\sim P(T^*\PP^\vee)$.
Given a point\footnote{By abuse of notation, $x\in\PP$ will often mean
$x=\Spec k'\to\PP$, where $k'$ is a finite extension of $k$.}
$x\in\PP$ and a line (through the 
origin) $L\subset T^*_x\PP$, we obtain first a line through $x$ in $\PP$
and then a hyperplane in $\PP$ containing $x$ given as the perpendicular to
that line. This hyperplane determines a point $x^\vee\in\PP^\vee$. The 
point $x$ determines a hyperplane $x^\perp$ in $\PP^\vee$, whose 
conormal gives rise to a line $L^\vee$ in $T^*_{x^\vee}\PP^\vee$.
The Legendre transform sends $(x,L)\mapsto(x^\vee,L^\vee)$ and running the
same algorithm again on $(x^\vee,L^\vee)$ gives its inverse.

The \emph{Radon transform} functors $R,R^\vee$ are defined as
$R:=p^\vee_*p^*[n-1]:D(\PP)\to D(\PP^\vee)$ and
$R^\vee:=p_*p^{\vee*}[n-1]:D(\PP^\vee)\to D(\PP)$.
The functors $R$ and $R^\vee(n-1)$ are both left and right adjoint.
Brylinski showed \cite[Corollaire 9.15]{Brylinski} that if $\mathcal M$ is
a perverse sheaf on $\PP$, then ${^p\mathcal H^i}(R(\mathcal M))$ is 
lisse for $i\ne0$, and that the functor 
$?\mapsto{^p\mathcal H^0}R(?)$ induces an equivalence
between the quotient of $\Perv(\PP)$ by the (full) subcategory of lisse 
perverse sheaves\footnote{Let $a:\PP\to\Spec k$ be the structure morphism.
A perverse sheaf on $\PP$ is lisse if and only if it is the $a^*[n]$
of a local system (i.e. lisse perverse sheaf) on $\Spec k$, as can be seen 
from Corollary~\ref{cor:lisse_t} and the argument of the proof of
Proposition~\ref{prop:lisse_equiv}.} and the similar quotient of
$\Perv(\PP^\vee)$. This implies in particular that the cones of the
adjunction morphisms $RR^\vee(n-1)\to\id$ and $\id\to R^\vee R(n-1)$ 
(on all of $D(\PP)$ and $D(\PP^\vee)$) are lisse, that if $\mathcal M$ is
lisse then $R(\mathcal M)$ is lisse, and that if $\mathcal M$ is
irreducible and not lisse, then ${^p\mathcal H^0}R(\mathcal M)$ has a
single non-lisse irreducible perverse sheaf in its Jordan-H\"older series.

\subsection{}
Let $X$ be a smooth variety, $\mathcal F\in D(X)$ and $C$ a closed conical
subset of $T^*X$.
\begin{lemma-cite*}[{\cite[Lemma 2.1]{Sasha}}]\label{lem:B2.1}
\begin{enumerate}[label=(\roman*)]
\item\label{lem:B2.1:base} The base of $SS^w(\mathcal F)$ equals the 
support of $\mathcal F$.
\item The sheaf $\mathcal F$ is micro-supported on the zero section 
$T_X^*X$ if and only if $\mathcal F$ is lisse.
\item\label{lem:B2.1:thick} The sheaves that are micro-supported on $C$ 
form a thick subcategory of $D(X)$.
\end{enumerate}\end{lemma-cite*}
The original proof goes through unchanged;
the fact that $(\id_X,\id_X)$ is a $\mathcal F$-acyclic test pair if and
only if $\mathcal F$ is lisse translates to the statement of
Lemma~\ref{lem:id_lisse}\ref{lem:id_lisse:id}, while the fact that
$\mathcal F$ is micro-supported on $T^*_XX$ if it is lisse follows from
Lemma~\ref{lem:id_lisse}\ref{lem:id_lisse:smooth}.

\subsection{}
If $r:X\to Z$ is a map of smooth varieties and $C$ is a conical closed
subset of $T^*X$ whose base is proper over $Z$, $r_\circ C$ is defined as
the image of $(dr)^{-1}(C)\subset T^*Z\times_ZX$ by the projection
$T^*Z\times_ZX\to T^*Z$. It is a conical closed subset of $T^*Z$.
\begin{lemma-cite*}[{\cite[Lemma 2.2]{Sasha}}]
\begin{enumerate}[label=(\roman*)]
\item If $Y$ is smooth, $f:Y\to X$ is $C$-transversal and 
$C\in\mathcal C(\mathcal F)$, then 
$f^\circ C\in\mathcal C(f^*\mathcal F)$.
\item\label{lem:B2.2:ii}
If $r:X\to Z$ with $Z$ smooth and $C\in\mathcal C(\mathcal F)$ with
base proper over $Z$, then $r_\circ C\in\mathcal C(r_*\mathcal F)$.
\end{enumerate}
\end{lemma-cite*}
The original proofs go through unchanged using
Lemma~\ref{lem:stabilities}\ref{lem:stabilities:proper} \&
\ref{lem:stabilities:support} for \ref{lem:B2.2:ii}.
Same for \cite[Lemmas 2.3, 2.4 \& 2.5]{Sasha}; the proof of part (i) of the
latter uses Lemma~\ref{lem:stabilities}\ref{lem:stabilities:support}
(or just the compatibility of the formation of vanishing cycles with proper 
base change).

\subsection{} In this section we explain how to see in our setting the
existence of the singular support and the upper dimension estimate on its
dimension. If $g:Y\to Z$ is a map of varieties and $\mathcal G\in D(Y)$,
then let $E_g(\mathcal G)$ denote the smallest closed subset of $Y$ such
that $g$ is universally locally acyclic rel.\,$\mathcal G$ on 
$Y\smallsetminus E_g(\mathcal F)$.
\begin{proposition-cite*}[{\cite[Theorems 3.1 \& 3.2]{Sasha}}]
\begin{enumerate}[label=(\roman*)]
\item\label{prop:SS_exists:X} Every sheaf $\mathcal F\in D(X)$ on a smooth
variety $X$ has singular support. One has $\dim SS(\mathcal F)\leq\dim X$.
\item\label{prop:SS_exists:P} If $X=\PP$, the projectivization
$P(SS(\mathcal F))\subset P(T^*\PP)$ equals the Legendre transform of
$E_p(p^{\vee*}R(\mathcal F))\subset Q$.
If $\mathcal F$ vanishes at the generic point of $\PP$ then 
$SS(\mathcal F)$ is the cone over $P(SS(\mathcal F))$, otherwise 
$SS(\mathcal F)$ is the union of this cone and $T^*_XX$.
\end{enumerate}
\end{proposition-cite*}
The existence statement of~\ref{prop:SS_exists:X} follows
from~\ref{prop:SS_exists:P}.
The proof of~\ref{prop:SS_exists:P} goes like the proof of
\cite[Theorem 3.2]{Sasha}, with
Lemma~\ref{lem:stabilities}\ref{lem:stabilities:proper} in lieu of 3.13
(i.e. Lemma~3.9) and Lemma~\ref{lem:stabilities}\ref{lem:stabilities:base} 
in lieu of \cite[Lemme 2.14]{D}.
This proves Theorem~\ref{th:SS}\ref{th:SS:existence}.

Now suppose $\mathcal F$ is a sheaf on $\PP$, $\mathcal F_0$ is any
integral model for $\mathcal F$ and $\mathcal F_0'$ is any approximation of
$\mathcal F_0$ over the ring of integers of a finite extension of 
$\QQ_\ell$.
Then $E_p(p^{\vee*}R(\mathcal F))\subset E_p(p^{\vee*}R(\mathcal F_0))
=E_p(p^{\vee*}R(\mathcal F_0'))=E_p(p^{\vee*}R(\mathcal F_0'/\ell))$.
Here, the first equality is by Lemma~\ref{lem:integral_extension_ula}, the
second by Lemma~\ref{lem:ula_mod_ell}, and the inclusion is obvious.
Beilinson shows \cite[Theorem 3.6]{Sasha} that for any constructible
\'etale $\ZZ/\ell$-sheaf $\mathcal G$ on $\PP^\vee$, one has
$\dim E_p(p^{\vee*}\mathcal G)\leq n-1$.
Therefore $\dim E_p(p^{\vee*}R(\mathcal F))\leq n-1$ also, which proves the
dimension estimate in part \ref{prop:SS_exists:X} and also establishes
Theorem~\ref{th:SS}\ref{th:SS:integral_comparison}, as 
$P(SS(\mathcal F_0))$ coincides with the Legendre transform of 
$E_p(p^{\vee*}R(\mathcal F_0))$.

\begin{remark*}\label{remark:SS_finite_extn}
As in the case of \'etale sheaves, this description of the singular support
implies its invariance under finite extensions of the base field: i.e. let
$k'/k$ be a finite extension; then 
$SS(\mathcal F_{k'})=SS(\mathcal F)_{k'}$ since 
$E_{p_{k'}}(p_{k'}^{\vee*}R(\mathcal F_{k'}))=E_p(p^{\vee*}R(\mathcal F))_{k'}$.\footnote{Let $\mathcal G:=p^{\vee*}R(\mathcal F)$. Then we
always have $E_{p_{k'}}(\mathcal G_{k'})\subset E_p(\mathcal G)_{k'}$.
Suppose the inclusion were strict. Then the image $U$ in $Q$ of
$Q_{k'}\smallsetminus E_{p_{k'}}(\mathcal G_{k'})$ would be an open set
strictly larger than $Q\smallsetminus E_p(\mathcal G)$. I claim that
$\mathcal G$ is $p$-universally locally acyclic on $U$.
Indeed, if $x\to U$ is a geometric point, $(U_x)_{k'}$ is the
disjoint union of the strict localizations of $Q_{k'}$ at the finitely many
points $y_i\in x\otimes_kk'$, and likewise for $(\PP_{p(x)})_{k'}$.
As $\Gal(k'/k)$ acts transitively on the $y_i$ (we may assume $k'/k$ is
Galois) and our sheaf $\mathcal G$ began life on $Q$,
$\mathcal G_{y_i}\to\Gamma((U_{k'})_{y_i}\times_{(\PP_{k'})_{p(y_i)}}t,\mathcal G)$
is an isomorphism for all $i$ because it is an isomorphism for one $i$ by
hypothesis. The same continues to hold if we replace $\PP$ by any 
$S\to\PP$.}
\end{remark*}

\subsection{}
To prove the rest of Theorem~\ref{th:SS}, at this point it is necessary to
state Beilinson's theorem on the singular support and the ramification
divisor, which continues to hold in our setting.

Let $i:\PP\hookrightarrow\tilde\PP$ denote the Veronese embedding of degree
$d>1$. Let $\tilde Q\subset\tilde\PP\times\tilde\PP^\vee$ denote the
incidence correspondence, and let 
$\tilde p,\tilde p^\vee:\tilde Q\to\tilde\PP,\tilde\PP^\vee$
denote the projections. Let $\tilde R,\tilde R^\vee$ denote the Radon
transforms with respect to $\tilde p$ and $\tilde p^\vee$.

A closed conical subset $C\subset T^*\PP$ gives rise to one in
$T^*\tilde\PP$ given by $i_\circ C$. Via the Legendre transform, its
projectivization $P(i_\circ C)$ comes with a map to $\tilde\PP^\vee$.
Let $D_C$ denote the image of $P(i_\circ C)$ in $\tilde\PP^\vee$.

If $\mathcal F$ is a sheaf on $\PP$, then let $D_{\mathcal F}$ denote the
smallest closed subset of $\tilde\PP^\vee$ such that
$\tilde R(i_*\mathcal F)$ is lisse on 
$\tilde\PP^\vee\smallsetminus D_{\mathcal F}$.
\begin{proposition-cite*}[{\cite[Theorem 1.7]{Sasha}}]\label{prop:B1.7}
	The closed subset $D_{\mathcal F}\subset\tilde\PP^\vee$ is a divisor.
	For each irreducible component $D_\gamma$ of $D_{\mathcal F}$, there is
	a unique irreducible closed conical subset $C_\gamma\subset T^*\PP$ of
	dimension $n$ with $D_\gamma=D_{C_\gamma}$.
	One has $SS(\mathcal F)=\bigcup_\gamma C_\gamma$.
	The maps $\tilde p_\gamma^\vee:P(i_\circ C_\gamma)\to D_\gamma$ are
	generically radicial.
	For $k$ perfect the maps $\tilde p_\gamma^{\vee}$ are birational
	unless $\charac k=2$ in which case the generic degree may also be 2.
\end{proposition-cite*}
Beilinson's proof of this theorem relies on a delicate analysis of the
geometry of the Veronese embedding, and holds verbatim in our setting.
We simply remark that if $\mathcal G$ is an irreducible perverse sheaf
supported on $\tilde\PP^\vee$ which is not lisse, and
$U\subset\tilde\PP^\vee$ is the maximal open subset of $\tilde\PP^\vee$
such that $\mathcal G|_U$ is lisse, then $\tilde\PP^\vee\smallsetminus U$
is a divisor. For this, we may assume $\Lambda$ is a finite extension $E$
of $\QQ_\ell$, and we argue as follows.
We may find a lattice for the $\pi_1(U)$-representation corresponding to
$\mathcal G$, hence a local system $\mathcal L_0$ on $U$ such that
$\mathcal G=j_{!*}(\mathcal L[n])$, where 
$j:U\hookrightarrow\tilde\PP^\vee$ and
$\mathcal L=\mathcal L_0[\ell^{-1}]$.
Let $^\circ j_*:=\mathcal H^0j_*$ denote the un-derived version of
the direct image $D(U)^\heartsuit\to D(\tilde\PP^\vee)^\heartsuit$ (hearts 
taken with respect to the usual $p=0$ t-structure).
Let $\mathcal G_0:=j_{!*}(\mathcal L_0[n])$. Then $\mathcal G_0$ is an
integral model for $\mathcal G$ and has
$\mathcal G_0|_U\simeq\mathcal L_0[n]$.
Then the maximal open subset $V_0\subset\tilde\PP^\vee$ on which 
$\mathcal G_0=j_{!*}(\mathcal G_0|_U)$ is lisse is
$\bigcap_nV_n$, where $V_n\subset\tilde\PP^\vee$ is the maximal open subset
such that $^\circ j_*\mathcal L_n$ is (\'etale) locally constant.
Indeed, if $L_0$ is the representation of
$\Gal(\sep{(\Frac\tilde\PP^\vee)}/\Frac\tilde\PP^\vee)$ corresponding to
$\mathcal L_0$, then the largest open set $V'_0\subset\tilde\PP^\vee$ to
which $L_0$ extends as a representation of $\pi_1(V_0')$ is $\bigcap_nV_n$,
and if $\mathcal L_0'$ is the local system on $V_0'$ with 
$\mathcal L_0'|_U=\mathcal L_0$, and
$\kappa:U\hookrightarrow V_0'$ denotes the open immersion, then 
$\kappa_{!*}(\mathcal L_0[n])
=({^\circ\kappa_*}\mathcal L_0)[n]=\mathcal L_0'[n]$,
which shows $V_0=V_0'$.
Zariski-Nagata purity shows that each $\tilde\PP^\vee\smallsetminus V_n$ is 
a divisor. Thus $\tilde\PP^\vee\smallsetminus V_0$ is a divisor as it is a
union of divisors.

Now, if $\tilde\PP^\vee\smallsetminus U$ were not a divisor, by the above
discussion for $\mathcal G_0$, we would find that $\mathcal G_0$ is lisse
on a strictly larger $U'\supsetneq U$.
But then $\mathcal G=\mathcal G_0[\ell^{-1}]$ must also be lisse on $U'$, a
contradiction.
This proves Theorem~\ref{th:SS}\ref{th:SS:dim}.

\subsection{}\label{sec:SSw}
We turn to the proof of the equality $SS=SS^w$, the inclusion
$SS^w\subset SS$ being automatic. Beilinson's proof \cite[\S4.9]{Sasha}
continues to hold in our setting, and we only need to say a few words about
the specialization argument that concludes his proof.
We are in the setting of $\mathcal F\in D(\PP)$, and we've chosen an
irreducible component $C_\gamma$ of $SS(\mathcal F)$. Then 
$D_\gamma:=D_{C_\gamma}$ is an irreducible component of the divisor 
$D_{\mathcal F}$, and we choose an open dense subset 
$D_{\gamma}^o\subset D_\gamma$ with the properties (in particular) that
$D_\gamma^o$ doesn't intersect other components of $D$ and that
$\tilde R(i_*\mathcal F)|_{D_\gamma^o}$ is lisse.
Next, we choose a line $L\subset\tilde\PP^\vee$ with the property
(in particular) that $L$ intersects $D$ properly and 
$L\cap D_\gamma\subset D_\gamma^o$.
Picking a geometric point $\tilde x^\vee\to L\cap D_\gamma$, the proof
rests on the fact that the vanishing cycles of $\tilde R(i_*\mathcal F)|_L$
at $x^\vee$ are nonzero (with respect to $\id:L\to L$).
This follows from the following
\begin{lemma*}
Let $\mathcal F$ be a constructible sheaf on an irreducible scheme
$X=\overline{\{\eta\}}$, and let $U$ be the largest open subset of $X$
where $\mathcal F$ is lisse. Let $\xi$ be a (geometric point centered on a)
generic point of $X\smallsetminus U$. Suppose $X\smallsetminus U$ has 
locally finitely many irreducible components. Then no specialization
morphism from a point of $U$ to $\xi$ induces an isomorphism on stalks of
$\mathcal F$.
\end{lemma*}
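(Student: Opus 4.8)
The plan is to argue by contradiction: I will show that if some specialization morphism from a point of $U$ to $\xi$ induced an isomorphism on stalks, then $\mathcal{F}$ would be lisse on a Zariski neighbourhood of $\xi$, which is absurd since $U$ is the largest open on which $\mathcal{F}$ is lisse and $\xi\notin U$.

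First I would pass to a local situation. Using the hypothesis that $X\smallsetminus U$ has locally finitely many irreducible components, shrink $X$ to an affine open neighbourhood of $\xi$ meeting no irreducible component of $X\smallsetminus U$ other than $\overline{\{\xi\}}$; then $X\smallsetminus U=\overline{\{\xi\}}$ is irreducible with generic point $\xi$, and $U$ is dense because it contains the generic point $\eta$ of $X$ (any sheaf restricted to a point is lisse). Then I would base change along $X_{\overline{\xi}}:=\Spec\mathcal{O}_{X,\overline{\xi}}\to X$, so that $X_{\overline{\xi}}$ is strictly henselian local with closed point $s$ lying over $\xi$. Formation of stalks is unchanged by this base change (transitivity of strict localization), and so is the property of being lisse, which moreover spreads out from $X_{\overline{\xi}}$ to a finite stage and is \'etale‑local. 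Consequently $\mathcal{F}$ is a constructible sheaf on $X_{\overline{\xi}}$ that is lisse on the punctured scheme $\mathring{X}:=X_{\overline{\xi}}\smallsetminus\{s\}$; and, crucially, $\mathcal{F}$ is \emph{not} lisse on $X_{\overline{\xi}}$, for otherwise lisseness would descend to an \'etale, hence to a Zariski, neighbourhood of $\xi$, forcing $\xi\in U$.

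Now suppose toward a contradiction that the specialization map $\mathcal{F}_{s}\to\mathcal{F}_{\overline{u}}$ is an isomorphism for some geometric point $\overline{u}$ of $\mathring{X}$. Since $\mathcal{F}|_{\mathring{X}}$ is lisse, every specialization map between stalks over points of $\mathring{X}$ is an isomorphism. Combining this with the transitivity of specialization morphisms along chains of generizations in $\mathring{X}$ — the map $\mathcal{F}_s\to\mathcal{F}_{\overline{w}}$ is a composition factor, or an extension, of $\mathcal{F}_s\to\mathcal{F}_{\overline{u}}$ whenever $\overline{w}$ and $\overline{u}$ are comparable under generization — I would propagate the isomorphism to conclude that $\mathcal{F}_s\to\mathcal{F}_{\overline{w}}$ is an isomorphism for \emph{every} geometric point $\overline{w}$ of $\mathring{X}$. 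Feeding this into the rank one valuation‑ring criterion of Proposition~\ref{prop:HS4.4}\ref{4.4_rank1}: for $f=\id_{X_{\overline{\xi}}}$ and any map $\Spec V\to X_{\overline{\xi}}$ from the spectrum of a rank one absolutely integrally closed valuation ring with fraction field $K$, one has $X_x\times_{\Spec V}\Spec K=\Spec K$, so the local acyclicity condition at a geometric point $x$ of the special fiber is exactly that the specialization map from the image of the closed point of $\Spec V$ to the (geometric) image of its generic point be an isomorphism — which is supplied by the propagation when the closed point maps to $s$, and by the lisseness of $\mathcal{F}$ on $\mathring{X}$ otherwise. Hence $\mathcal{F}$ is universally locally acyclic over $X_{\overline{\xi}}$, and therefore lisse on $X_{\overline{\xi}}$ by Lemma~\ref{lem:id_lisse}\ref{lem:id_lisse:id} — contradicting the second paragraph.

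The step I expect to be the main obstacle is the propagation from one specialization isomorphism to all of them, which hinges on connectedness properties of $\mathring{X}$. When $X$ is geometrically unibranch at $\xi$ — in particular in the application, where $X$ is a smooth curve, $\mathring{X}$ is a single point, and the propagation is vacuous — this is immediate. In general $\mathring{X}$ may be disconnected, its connected components corresponding to the branches of $X$ at $\xi$; there one must use that $\mathcal{F}$ on $X_{\overline{\xi}}$ descends from the irreducible scheme $X$ (the branches are permuted by $\Gal(\overline{k(\xi)}/k(\xi))$, or are not separated Zariski‑locally on $X$, so that a specialization isomorphism along one branch forces one along the others), and it is precisely here that the bound on the number of irreducible components is needed.
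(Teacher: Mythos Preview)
Your overall strategy---show that one specialization isomorphism forces lisseness near $\xi$, contradicting maximality of $U$---is exactly the paper's, and your endgame via Lemma~\ref{lem:id_lisse}\ref{lem:id_lisse:id} is the same. But the propagation step you flag as the obstacle is a real gap in your write-up, and the paper dissolves it rather than confronts it: the trick is \emph{not} to pass to the strict localization $X_{\overline\xi}$ at all.

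Staying on $X$ keeps the irreducibility you lose by strictly henselizing. Since $X$ is irreducible with generic point $\eta\in U$, every $u\in U$ is a specialization of $\eta$, and lisseness on $U$ makes $\mathcal F_u\to\mathcal F_{\overline\eta}$ an isomorphism. So if $\mathcal F_{\xi}\to\mathcal F_{\overline u}$ is an isomorphism for one $u$, composing with $\mathcal F_{\overline u}\xrightarrow{\sim}\mathcal F_{\overline\eta}$ shows $\mathcal F_\xi\to\mathcal F_{\overline\eta}$ is an isomorphism, and then $\mathcal F_\xi\to\mathcal F_{\overline{u'}}$ is an isomorphism for every $u'\in U$. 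The generic point $\eta$ serves as the hub you were looking for; on $\mathring X=X_{\overline\xi}\smallsetminus\{s\}$ there is no such hub when $X$ is not unibranch at $\xi$, which is why your Galois/branch argument had to be improvised.

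The finiteness hypothesis is then used as follows: take a constructible stratification of $X\smallsetminus U$ on which $\mathcal F$ is lisse, and shrink $X$ to an affine neighbourhood of $\xi$ missing the closures of all strata that do not contain $\xi$. After this, $X\smallsetminus U$ is a single stratum $Z$ with generic point $\xi$ on which $\mathcal F$ is lisse. Now \emph{every} specialization map on $X$ is an isomorphism: within $U$ and within $Z$ by lisseness, and for $u\in U$ specializing to $z\in Z$ one factors through $\eta$ and $\xi$ (using $\eta\rightsquigarrow u\rightsquigarrow z$ and $\eta\rightsquigarrow\xi\rightsquigarrow z$) to reduce to the case already handled. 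This is the point where working on $X$ rather than on $X_{\overline\xi}$ pays off again: you also need specialization maps to points of $Z$ other than $\xi$, and these are accessed via the hub $\xi$ inside $Z$.

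Your invocation of the rank~1 criterion is fine; the paper instead observes directly (see the Remark following the proof) that specialization maps for a pullback sheaf coincide with specialization maps of the original, so the ``universal'' in universal local acyclicity is automatic once all specialization maps on $X$ are isomorphisms.
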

(Compare \cite[Exp. IX Proposition 2.11]{SGAA}.)
Let's admit the lemma and see how it implies the desired conclusion.
If $\mathcal G$ denotes $\tilde R(i_*\mathcal F)$, then $\mathcal G$ is
lisse on the complement of $D$ but not on a neighborhood of the generic
point of $D_\gamma^o$. It's also lisse when restricted to $D_\gamma^o$ by
construction.
Let $\xi$ be a geometric point centered on the generic point of 
$D^o_\gamma$ and fix specialization morphisms
$\xi\rightsquigarrow\tilde x^\vee$ and $\overline\eta\rightsquigarrow\xi$.
Recalling that being lisse is equivalent to being universally locally 
acyclic with respect to the identity map by 
Lemma~\ref{lem:id_lisse}\ref{lem:id_lisse:id}, we have that
$\mathcal F_{\tilde x^\vee}\xra\sim\mathcal F_\xi$ is an isomorphism and
that any specialization map between points of
$\PP\smallsetminus D_{\mathcal F}$ induce isomorphisms on stalks of
$\mathcal F$. However, $\mathcal F_\xi\to\mathcal F_{\overline\eta}$ is not
an isomorphism by the lemma.
Therefore the composite specialization morphism 
$\mathcal F_{\tilde x^\vee}\to\mathcal F_{\overline\eta}$ is not an
isomorphism. Let $\zeta$ denote the generic point of $L$; it belongs to
$\PP\smallsetminus D_{\mathcal F}$, so any specialization 
$\overline\eta\rightsquigarrow\zeta$ induces an isomorphism
$\mathcal F_\zeta\xra\sim\mathcal F_{\overline\eta}$.
It follows that, fixing a specialization map
$\zeta\rightsquigarrow\tilde x^\vee$,
$\mathcal F_{\tilde x^\vee}\to\mathcal F_\zeta$ is not an isomorphism,
hence that the vanishing cycles for $\id:L\to L$ are nonzero at
$\tilde x^\vee$, as desired.
\begin{proof}[Proof of lemma]
	If any one specialization morphism from a point of $U$ to $\xi$ induces 
	an isomorphism on stalks of $\mathcal F$, then every specialization
	morphism from a point of $U$ to $\xi$ does, as any two points of $U$ are
	specializations of $\eta$ and $\mathcal F$ is lisse on $U$.
	If this is the case, then there is a neighborhood of $\xi$ in $X$ where
	all specialization morphisms induce isomorphisms on stalks of 
	$\mathcal F$. (Indeed, possibly replacing $X$ by an affine neighborhood
	of $\xi$, given a partition of $X\smallsetminus U$
	witnessing the constructibility of $\mathcal F$, the closure of the
	union of the strata not containing $\xi$ does not contain $\xi$.)
	Then $\mathcal F$ is universally locally acyclic with respect to the
	identity morphism on that larger open subset, hence lisse there, 
	contradicting the maximality of $U$.
\end{proof}
\begin{remark*}
	The definition of universal local acyclicity requires that we check that
	the specialization maps on our neighborhood of $\xi$ continue to induce
	isomorphisms on stalks of $\mathcal F$ after arbitrary base change in
	$X$, but this follows from what we've already checked.
	Indeed, let $f:Y\to X$ be any morphism of schemes and let 
	$y_1\rightsquigarrow y_0$ be a specialization of geometric points in 
	$Y$. When $\mathcal F$ is an abelian \'etale sheaf, it's easy to see
	that $\operatorname{sp}:f^*(\mathcal F)_{y_0}\to f^*(\mathcal F)_{y_1}$
	coincides with the map
	$\operatorname{sp}:\mathcal F_{f(y_0)}\to\mathcal F_{f(y_1)}$
	($f(y_1)$ is a geometric point of $X_{f(y_0)}$ as $y_1$ is a geometric
	point of $Y_{y_0}$). This implies the same for constructible sheaves 
	with $\ell$-adic coefficients.
\end{remark*}
This proves Theorem~\ref{th:SS}\ref{th:SS:SSw}, and
Theorem~\ref{th:SS}\ref{th:SS:verdier} follows by the same argument as in
\S\ref{sec:integral_SS}. Theorem~\ref{th:SS}\ref{th:SS:constituents}
follows from the equality $SS=SS^w$ in light of the perverse t-exactness of
the vanishing cycles functors $\phi_f$.
Indeed, if $\{\mathcal F_\alpha\}$ are the constituents of some 
$\mathcal F\in D(X)$, then 
$SS(\mathcal F)\subset\bigcup_\alpha SS(\mathcal F_\alpha)$ by 
Lemma~\ref{lem:B2.1}\ref{lem:B2.1:thick}. For the reverse inclusion, assume
$k$ is infinite, suppose $0\to\mathcal K\to\mathcal M\to\mathcal N\to0$ is
an exact sequence of perverse sheaves on $X$, and fix a geometric point
$x\to X$ and a function $f:X\to\A^1$ so that $f(x)=0$ and such that
$\phi_f(\mathcal N)_x\ne0$. Then $x$ must belong to
$\supp\phi_f(\mathcal M)$; if it didn't, we could replace $X$ by an open
neighborhood of $x$ and find that $\phi_f(\mathcal N)_x=0$ by the 
t-exactness of $\phi_f$. If $x\in\supp\phi_f(\mathcal M)$, then
$df(y)\in SS^w(\mathcal M)$ for all $y$ in a locally closed subset
$V$ of the geometric special fiber with $x\in\overline V$.
The description of $SS^w(\mathcal M)$ in \S\ref{sec:ss_def} then implies
that $df(x)\in SS^w(\mathcal M)$. Of course the same argument holds with
$\mathcal K$ in lieu of $\mathcal N$, and shows that 
$SS^w(\mathcal M)=SS^w(\mathcal K)\cup SS^w(\mathcal N)$.
The same argument applied to
$^p\tau^{\leq 0}\mathcal F\to\mathcal F\to{^p\tau^{>0}}\mathcal F\to\ $
shows that $SS^w(\mathcal F)$ contains both
$SS^w(^p\tau^{\leq 0}\mathcal F)$ 
and $SS^w(^p\tau^{>0}\mathcal F)$, so
$SS^w(\mathcal F)=\bigcup_i SS^w(^p\mathcal H^i\mathcal F)$. This part
works just as well with integral coefficients, which proves the first
sentence of Theorem~\ref{th:SS}\ref{th:SS:constituents} in general.
(The case of finite $k$ reduces to this one after making a finite extension
of the base field and using Remark~\ref{remark:SS_finite_extn}.)

Theorem~\ref{th:SS}\ref{th:SS:smooth_pullback} holds with the original 
proof \cite[\S4.10]{Sasha}, relying on the commutation of $\phi_f$ with
smooth pullback. 

Finally, to prove Theorem~\ref{th:SS}\ref{th:SS:field_extn}, by
Proposition~\ref{prop:B1.7} it suffices to show that if $\mathcal G$ is a
sheaf on $\tilde\PP^\vee$, $U\subset\tilde\PP^\vee$ is the maximal open
subset where $\mathcal G$ is lisse with complement 
$D:=\tilde\PP^\vee\smallsetminus U$, and $k'/k$ is any field extension,
then $U_{k'}$ is the maximal open subset of $\tilde\PP^\vee_{k'}$ where 
$\mathcal G_{k'}$ is lisse.
Clearly, $\mathcal G_{k'}$ is lisse on $U_{k'}$. If $\mathcal G_{k'}$ is
lisse on some strictly larger open subset, then it's lisse on a 
neighborhood of a generic point of $D_{k'}$.
Let $x_0$ be a geometric point centered on that generic point, defining 
also a geometric generic point of $D$, and let $x_1$ be a geometric point
of $U_{k'}$ specializing to $x_0$. Then the specialization map
$\mathcal G_{x_0}\to\mathcal G_{x_1}$ is an isomorphism.
But by the remark above, this specialization map coincides with the
corresponding specialization map in $\tilde\PP^\vee$, which can't be an
isomorphism by the previous lemma, since $\mathcal G$ isn't lisse on a
neighborhood of $\xi\in D$ by hypothesis.


This completes the proof of Theorem~\ref{th:SS} with rational coefficients.

\end{document}